\newtheorem{theorem}{Theorem}[section]
\newtheorem{definition}[theorem]{Definition}
\newtheorem{lemma}[theorem]{Lemma}
\newtheorem{proposition}[theorem]{Proposition}
\newtheorem{remark}[theorem]{Remark}
\newenvironment{taggedtheorem}[1]
 {\taggedtheoremx}
 {\endtaggedtheoremx}
\newenvironment{proof}[1][Proof]{\textbf{#1.} }{\hfill\rule{0.5em}{0.5em}}
{\catcode`\@=11\global\let\AddToReset=\@addtoreset
\AddToReset{equation}{section}

\AddToReset{theorem}{section}

\title{Global gradient estimates for very singular quasilinear elliptic equations with measure data}

\author{Minh-Phuong Tran\thanks{Applied Analysis Research Group, Faculty of Mathematics and Statistics, Ton Duc Thang University, Ho Chi Minh city, Vietnam; \texttt{tranminhphuong@tdtu.edu.vn}}, Thanh-Nhan Nguyen\thanks{Group of Analysis and Applied Mathematics, Department of Mathematics, Ho Chi Minh City University of Education, Ho Chi Minh city, Vietnam; \texttt{nhannt@hcmue.edu.vn}}}

\date{\today} 

\begin{document}
\maketitle

\begin{abstract}
This paper continues the development of regularity results for quasilinear measure data problems
\begin{align*}
\begin{cases}
-\mathrm{div}(A(x,\nabla u)) &= \mu \quad \text{in} \ \ \Omega, \\
\quad \quad \qquad u &=0 \quad \text{on} \ \ \partial \Omega,
\end{cases}
\end{align*}
in Lorentz and Lorentz-Morrey spaces, where $\Omega \subset \mathbb{R}^n$ ($n \ge 2$), $\mu$ is a finite Radon measure on $\Omega$, and $A$ is a monotone Carath\'eodory vector valued operator acting between $W^{1,p}_0(\Omega)$ and its dual $W^{-1,p'}(\Omega)$. It emphasizes that this paper studies the `very singular' case $1<p \le \frac{3n-2}{2n-1}$ and the problem is considered under the weak assumption, where the $p$-capacity uniform thickness condition is imposed on the complement of domain $\Omega$. There are two main results obtained in our study pertaining to the global gradient estimates of solutions in Lorentz and Lorentz-Morrey spaces involving the use of  maximal and fractional maximal operators. The idea for writing this working paper comes directly from the recent results by others in the same research topic, where global estimates for gradient of solutions for the `very singular' case still remains a challenge, specifically related to Lorentz and Lorentz-Morrey spaces.

\medskip

\medskip

\noindent {\emph{Keywords:}} Nonlinear elliptic equations; Measure data; Gradient estimates; Regularity; Lorentz spaces;  Lorentz-Morrey spaces.
\end{abstract} 

\newpage
\tableofcontents  
                  
\section{Introduction and main results}\label{sec:intro}

The presence of quasilinear elliptic equations with measure data $-\mathrm{div}(A(x,\nabla u))=\mu$ and their regularity results were early studied by P. Benilan {\it et al.} in~\cite{Benilan1975}, L. Boccardo {\it et al.} in~\cite{Boccardo1992, Boccardo1996}. In recent years, many different models arising in physics, chemistry, biology and various scientific fields that involve the measure data problems were also the subject of many researchers. Later, in this class of elliptic problems, there has been a growing interest in the study of existence, uniqueness and/or some properties of solutions. For instance, many notions of solutions (very weak solutions, entropy solutions, renormalized solutions and SOLA-Solutions Obtained by Limits of Approximations) were introduced in~\cite{Serrin, Lions, Murat1993, Maso1999}, and theories of regularity were early studied by F. Murat {\it et al.} in~\cite{Murat1993, Maso1999}, M. F. Betta {\it et al.} in~\cite{Betta1994, Betta1998} and L. Boccardo {\it et al.} in~\cite{Boccardo1992,Boccardo1996} etc. Afterwards, the development of solution estimates in weak Lebesgue and Sobolev spaces has been extensively studied in recent years, therein Lorentz spaces, Lorentz-Morrey and Orlicz spaces (weighted or non-weighted) could be listed. Lately, there have been many regularity results concerning to nonlinear elliptic equations with measure data have been further obtained. We refer the interested reader to~\cite{Mi1, Mi3, Min1, Mi4, KMin2013,KMi1, KMi2} for potential estimates, to~\cite{Jia2007, BYZ2008, Chle2018,NP-JFA-21,NP-20} for some estimates in Orlicz spaces and to~\cite{55QH4, MPT2018, MPT2019, MPTN2019, MPT9, MPT7} for gradient estimates in Lorentz and Lorentz-Morrey spaces under various assumptions on domain $\Omega$ and the range of $p$. 

1.1. \textbf{Elliptic equations with measure data.} Before diving into the motivation of our study, it is important to start with the problem description and some of its assumptions. Let us now consider the quasilinear elliptic equations in the presence of measure as following
\begin{align} 
\label{eq:main}
-\mathrm{div}(A(x,\nabla u)) = \mu  \ \mbox{ in } \ \Omega;  \quad
u = 0  \ \mbox{ on } \  \partial \Omega,
\end{align}
where $\Omega \subset \mathbb{R}^n$ is a bounded open subset, $n \ge 2$; the datum measure $\mu$ is defined in $\mathfrak{M}_b(\Omega)$-the space of all Radon measures on $\Omega$ with bounded total variation; the operator $A: \Omega \times \mathbb{R}^n \to \mathbb{R}^n$ here is a Carath\'eodory function satisfying the certain growth and monotonicity conditions. This operator and its properties are described in Section~\ref{sec:pre}.

There has been a lot of attention devoted to the notion of solutions to problem~\eqref{eq:main} in years. Naturally, one can start with the distributional solution (see Definition~\ref{def:weaksol}). However, the difficulty comes from the uniqueness of solution, and Serrin in \cite{Serrin} pointed out a counterexample where the uniqueness of distributional solution fails in general. Due to that reason, further notions of ``very weak'' solution have been investigated, such as SOLA or solutions in entropic and renormalized sense, etc. Inspired by preceding papers~\cite{55QH2, 55QH4, MPT2018, MPT2019} and for the present purpose, the solution to our problem~\eqref{eq:main} is set in the context of \emph{renormalized solutions}, that will be armed with a precise definition in the next section. Moreover, in the present work, domain assumptions specify the domain $\Omega$ has its complement being uniformly $p$-capacity thick. One notices that this class of domains is relatively large (including all domains with Lipschitz boundaries). A precise definition of the domain will be given in Section~\ref{sec:pre}. Otherwise, in order to obtain global regularity estimates over domains with non-smooth boundaries, it emphasizes that we consider the weaker assumption of domain $\Omega$ than that of the Reifenberg flatness assumption studied in various prior studies, see~\cite{ER60, DT1999, KT2003, MT2010}.

1.2. \textbf{Motivation.} There have been long-standing contributions in the research of regularity of solutions to the class of nonlinear elliptic equations with measure data. For instance, firstly by Boccardo {\it et al.} in~\cite{Boccardo1992, Boccardo1996}, under the assumption that $\Omega$ is bounded, when $\mu \in L^m(\Omega)$ for $1\le m<\frac{np}{n(p-1)+p}$, authors proved that the unique solution $u$ belongs to $W_0^{1,\frac{nm(p-1)}{n-m}}(\Omega)$. Later, the borderline case when $m=\frac{np}{n(p-1)+p}$ ($p<n$) was derived locally by Mingione in~\cite{Mi3}. Since then, whenever $2-\frac{1}{n}<p \le 2$, several local Calder\'on-Zygmund type estimates for the elliptic problems~\eqref{eq:main} were developed in different works~\cite{KMi1, KMi2, Mi1, Mi3, Mi4,Duzamin2,55DuzaMing} via potential estimates. Particularly, authors therein proposed the local estimates at least for the case $2 \le p \le n$, and further extension to global estimates has also been obtained by using maximal functions. 

More recently, under the $p$-capacity assumption on the complement of $\Omega$ (boundary is thick enough to satisfy a uniform density condition) for the regular case $2-\frac{1}{n}<p \le n$, the global gradient estimates of solutions to problem~\eqref{eq:main} have been obtained in the Lorentz setting due to~\cite{Phuc2014}; and it enables author further to treat the global regularity estimates in Lorentz-Morrey spaces in supercritical case, see~\cite{55Ph3}. According to the range value of $p$, our previous works in~\cite{MPT2018, MPTN2019} studied the gradient regularity to equation~\eqref{eq:main} in the framework of Lorentz spaces $L^{s,t}(\Omega)$, and  in Lorentz-Morrey spaces $L^{s,t;\kappa}(\Omega)$  respectively, for $0<s<p+\varepsilon$ remains bounded, $0 < t\le \infty$ and $0 \le \kappa<n$. 

In order to achieve better results where $L^{s,t}$ and $L^{s,t;\kappa}(\Omega)$ estimates are obtained for all $0<s<\infty$, it requires some additional structural assumptions of the problem setting. For instance, domain $\Omega$ is assumed under Reifenberg flatness condition (This class of domains include all $C^1$-domains, Lipschitz domains with small Lipschitz constants, and domains with fractal boundaries), and more information to operator $A$ that $A(x,\xi)$ is continuously differentiable in $\xi$ away from the origin and satisfies the smallness condition of BMO type. According to these assumptions, Nguyen {\it et al.} have got better Lorentz $L^{s,t}(\Omega)$ estimates to~\eqref{eq:main} in~\cite{55QH4} for singular case (that is, $\frac{3n-2}{2n-1} < p \le 2-\frac{1}{n}$). Further on, there have been numerous studies over Reifenberg domain for divergence type elliptic equations. For related results, we refer to~\cite{SSB2, BW1_1, CM2014, Phuc2015, CoMi2016, BCDKS, BW1, KZ, MPT7,NP-NA-20} and the reader can consult also the references therein.

Our study is motivated by the question raised in some recent advances, that how to treat such regularity results for the `very singular' problem when $1< p \le \frac{3n-2}{2n-1}$. There are two main results obtained in this work, where we prove the Lorentz and Lorentz-Morrey global estimates for the gradient of solutions in terms of fractional maximal functions. 

1.3. \textbf{Techniques.} Before delving into main results of this paper, we review various techniques concerning the regularity theory for elliptic and parabolic partial differential equations. From a very first approach proposed by Caffarelli and Peral \cite{CP1998} to obtain the local $W^{1,p}$ estimates for a class of $p$-Laplace equations, a new technique has been presented by Acerbi and Mingione in \cite{AM2007} to prove Calder\'on-Zygmund estimates with the use of $C^{0,1}$ estimates and maximal functions only. Later, this approach has been widely used and yielded very valuable results  in regularity theory. It is also noteworthy that there have been many other approaches for the regularity theory, such as method of using Hodge decomposition theorem by Iwaniec and Sbordone in \cite{IS1994}; method that is based on the truncation of certain maximal operators by Lewis in \cite{Lewis93}; method of using Stampacchia interpolation theorem in \cite{KS1980}; Dong, Kim and Krylov' results involving the sharp and maximal functions due to Fefferman-Stein theorem \cite{KS1980_1}, or some global Calder\'on-Zygmund estimates were first presented by Byun and Wang from geometrical approach \cite{BW2}, etc. Our approach in this paper is an improvement of the techniques introduced in \cite{Mi3,AM2007,BW2}, where the use of \emph{fractional-maximal distribution functions} (FMDs) is taken advantage to conclude global gradient estimates of solutions to~\eqref{eq:main}. In our efforts, it is hoped that interested readers will pay attention to \cite{NP-JFA-21} for further details.

1.4. \textbf{Main results.} Let us now state here the major results, via four important theorems as below. Throughout the paper, we always assume that complement of the bounded domain $\Omega$ satisfies the $p$-capacity uniform thickness condition with positive constants $c_0$ and $r_0$ (see Definition \ref{def:pcapathick}). Moreover, for simplicity and conciseness of notations, we introduce
\begin{align}\label{eq:mstar}
m^* = \frac{n}{2(p-1)n+2-p} \ \mbox{ and } \ m^{**} = \frac{pn}{(p-1)n+p}
\end{align} 
in main theorems and their proofs. Our method is to establish the level-set inequalities on distribution function as in Theorem~\ref{theo:lambda_estimateA} and~\ref{theo:good-lambda-2B}.

\begin{taggedtheorem}{A}
\label{theo:lambda_estimateA}
Let $1<p\le\frac{3n-2}{2n-1}$ and $\mu \in \mathfrak{M}_b(\Omega) \cap L^{m}(\Omega)$ for some $m \in (m^*,m^{**})$. Let $u$ be a renormalized solution to~\eqref{eq:main} and $\frac{n}{2n-1} < q < \frac{nm(p-1)}{n-m}$. Then one can find some constants $a = a(n,p,\Lambda,c_0) \in (0,1)$, $b = b(n,p,q,\Lambda,c_0) \in \mathbb{R}$, $\varepsilon_0 = \varepsilon_0(n,p,q,\Lambda,c_0)>0$ and $C = C(n,p,q,\Lambda,c_0,\mathrm{diam}(\Omega)/{r_0})>0$ such that the following inequality
\begin{align}\label{eq:main-dist}
d_{\mathbb{U}}(\varepsilon^{-a}\lambda) \le C \varepsilon d_{\mathbb{U}}(\lambda) + d_{\Pi}(\varepsilon^{b}\lambda)
\end{align}
holds for all $\lambda>0$ and $\varepsilon \in (0,\varepsilon_0)$. Here, the following notations are in use
\begin{align}\label{def:U-Pi}
\mathbb{U} = (\mathbf{M}(|\nabla u|^{q}))^{\frac{1}{q}}, \quad \Pi = (\mathbf{M}_{m}(|\mu|^{m}))^{\frac{1}{m(p-1)}},
\end{align} 
and the distribution function $d_{g}$ of a measurable function $g$ on $\Omega$ is defined as follows
\begin{align}\notag
d_{g}(\lambda) = \mathcal{L}^n \left(\left\{x \in \Omega: \ |g(x)|> \lambda\right\}\right), \quad \lambda \ge 0.
\end{align}
\end{taggedtheorem}

Throughout this paper, we use the abbreviation $\mathcal{L}^n(D)$ for the $n$-dimensional Lebesgue measure of a set $D \subset \mathbb{R}^n$. Moreover, for the sake of convenience,  the set $\{x \in \Omega: \, |g(x)| > \lambda\}$ may be simply written by $\{|g|>\lambda\}$.

As we mentioned above, the value of $m^{**}$ is prescribed to guarantee the existence of \emph{renormalized solution} to equation~\eqref{eq:main}. In addition, to conclude the Lorentz and Lorentz-Morrey gradient estimates,  it is critically important for us to construct the level-set inequalities, in which some comparison estimates in both the interior and on the boundary of domain have been effectively employed. In our analysis, results obtained in this paper are comparable with those in~\cite[Lemma 2.2]{55QH4} when $\frac{3n-2}{2n-1}<p \le 2-\frac{1}{n}$. However, in our work, an additional assumption on the datum $\mu$ is imposed, for which $\mu$ is a function belonging to $L^m(\Omega)$ for $m>1$. More precisely, we will focus our attention on datum $\mu$ assumed to be a function belonging to $L^m(\Omega)$, for $m>m^*$ specified above in Theorem~\ref{theo:lambda_estimateA}. Indeed, we notice that
\begin{align*}
p \le \frac{3n-2}{2n-1} \Longleftrightarrow m^* \ge 1,
\end{align*}
and in this case, generally speaking, based on methods given in~\cite{Benilan1995, 55QH4}, we cannot expect to obtain comparison results for a general measure datum $\mu$ (as a function in $L^1$) and the range of $p$; $p<\frac{3n-2}{2n-1}$. Otherwise, in \cite{55QH4}, authors dealt with the case $p>\frac{3n-2}{2n-1}$ (implies further that $m^*<1$), and this suffices to prove  both interior and boundary estimates with $\mu \in L^1(\Omega)$ (generally a measure). Therefore, it is natural to expect a more appropriate method or to require that one or more assumptions on our initial data. 

The next theorem~\ref{theo:good-lambda-2B} constructs the local version of level-set inequality presented in Theorem~\ref{theo:lambda_estimateA}, that can be applied to prove Lorentz-Morrey gradient estimates in Theorem~\ref{theo:mainD} later. However, the proof of Theorem~\ref{theo:good-lambda-2B} is rather similar to that of Theorem~\ref{theo:lambda_estimateA}, except for some estimates.

\begin{taggedtheorem}{B}
\label{theo:good-lambda-2B}
Let $1<p\le\frac{3n-2}{2n-1}$, $\mu \in \mathfrak{M}_b(\Omega) \cap L^{m}(\Omega)$ for some $m \in (m^*,m^{**})$. Let $u$ be a renormalized solution to~\eqref{eq:main} and $\frac{n}{2n-1} < q < \frac{nm(p-1)}{n-m}$. One finds some constants $a = a(n,p,\Lambda,c_0) \in (0,1)$, $b = b(n,p,q,\Lambda,c_0) \in \mathbb{R}$, $\varepsilon_0 = \varepsilon_0(n,p,q,\Lambda,c_0)>0$ and $C = C(n,p,q,\Lambda,c_0,\mathrm{diam}(\Omega)/{r_0})>0$ such that and for every $x \in \Omega$ and $0<\rho <\mathrm{diam}(\Omega)$ the following inequality
\begin{align}\label{eq:main-dist-2}
d_{\tilde{\mathbb{U}}}(B_{\rho}(x);\varepsilon^{-a}\lambda) \le C \varepsilon d_{\tilde{\mathbb{U}}}(B_{\rho}(x);\lambda) + d_{\tilde{\Pi}}(B_{\rho}(x);\varepsilon^{b}\lambda)
\end{align}
holds for any $\lambda>\varepsilon^{-b} \rho^{-\frac{n}{q}} \|\nabla u\|_{L^{q}(B_{10\rho}(x)\cap \Omega)}$ and $\varepsilon\in (0,\varepsilon_0)$, where 
\begin{align*}
\tilde{\mathbb{U}} = (\mathbf{M}(\chi_{B_{10\rho}(x)}|\nabla u|^{q}))^{\frac{1}{q}}, \quad \tilde{\Pi} = (\mathbf{M}_{m}(\chi_{B_{10\rho}(x)}|\mu|^{m}))^{\frac{1}{m(p-1)}},
\end{align*}
and the local distribution function $d_{g}(B_{\rho}(x);\cdot)$ of a measurable function $g$ is defined by
\begin{align}\notag
d_{g}(B_{\rho}(x);\lambda) = \mathcal{L}^n \left(\left\{\xi \in B_{\rho}(x)\cap \Omega: \ |g(\xi)|> \lambda\right\}\right), \quad \lambda \ge 0.
\end{align}
\end{taggedtheorem}

In Theorems~\ref{theo:mainC} and~\ref{theo:mainD}, some improved results of Lorentz and Morrey-Lorentz gradient estimates are given. This work extends our earlier works in~\cite{MPT2018, MPTN2019} when $\Omega$ satisfies the $p$-capacity condition and $p$ is singular. It also remarks that for specific case when $m\equiv 1$, the statements in Theorem~\ref{theo:mainC} and~\ref{theo:mainD} still hold, but they only make sense for a certain range of $p$, i.e. $\frac{3n-2}{2n-1}< p \le 2- \frac{1}{n}$. A detailed explanation will be discussed in Section~\ref{sec:pre}.

\begin{taggedtheorem}{C}
\label{theo:mainC}
Let $1<p\le\frac{3n-2}{2n-1}$ and $\mu \in \mathfrak{M}_b(\Omega)$. Assume that the given measure data $\mu \in L^{m}(\Omega)$ for some $m \in (m^*,m^{**})$. Then there exists a constant $\Theta_0 = \Theta_0(n,p,\Lambda,c_0) >p$ such that for any renormalized solution $u$ to~\eqref{eq:main}, there holds
\begin{align}  \label{eq:theo-mainC}
\|\nabla u\|_{L^{s,t}(\Omega)} \le C \|\left(\mathbf{M}_{m}(|\mu|^{m})\right)^{\frac{1}{m(p-1)}}\|_{L^{s,t}(\Omega)},
\end{align}
for any $0<s<\Theta_0$ and $0<t\le \infty$. The constant $C$ depends on $n$, $p$, $\Lambda$, $m$, $s$, $t$, $c_0$ and $D_0/r_0$.
\end{taggedtheorem}

\begin{taggedtheorem}{D}
\label{theo:mainD} 
Let $1<p\le\frac{3n-2}{2n-1}$ and $\mu \in \mathfrak{M}_b(\Omega) \cap L^{m}(\Omega)$ for some $m \in (m^*,m^{**})$. Then, there exist $\Theta_0 = \Theta_0(n,p,\Lambda,c_0) >p$, $\beta_0 = \beta_0(n,p,\Lambda) \in (0,1/2]$ such that for any renormalized solution $u$ to~\eqref{eq:main} with given datum $\mu \in L^{ms,mt; \kappa}(\Omega)$ satisfying $0< t \le \infty$, $0< \kappa < \min\left\{\frac{(n-m)\Theta_0}{m(p-1)}, n\right\}$, and
\begin{align}\label{est:cond-s}
\frac{\kappa}{n} \le s < \min\left\{\frac{\kappa}{m+m(1-\beta_0)(p-1)}, \frac{\Theta_0 \kappa}{m\Theta_0 + m\kappa(p-1)}\right\},
\end{align}
there holds
\begin{align}\label{est:LM}
\||\nabla u|^{p-1}\|_{L^{\frac{ m s \kappa}{\kappa-ms},\frac{m t \kappa}{\kappa-mt}; \kappa}(\Omega)} \le C \||\mu|^m\|_{L^{s,t;\kappa}(\Omega)}^{\frac{1}{m}}.
\end{align}
Here, the positive constant $C$ depends on $n$, $m$, $p$, $\Lambda$, $s$, $t$, $\kappa$, $c_0$ and $\mathrm{diam}(\Omega)/r_0$.
\end{taggedtheorem}

1.5. \textbf{The layout of the paper.} Let us briefly summarize the contents of the paper as follows. We begin with a few preliminaries about notation, definitions and assumptions of the problem in Section \ref{sec:pre}. Next, in Section~\ref{sec:prelem} we prove some preparatory lemmas including comparison estimates that are vitally important for the main theorems. It is worth emphasizing that most of comparison results regarding to problem~\eqref{eq:main} have been formulated and proved  up to the boundary. Section \ref{sec:level-set} is concerned with the level-set inequalities on distribution functions. Here we can succinctly present the use of FMDs to prove the good-$\lambda$ type inequalities in Theorems \ref{theo:lambda_estimateA} and \ref{theo:good-lambda-2B}. It is worth noting that the language of FMDs forms a key tool in our arguments in this paper. We end up with Section~\ref{sec:LorMorrey-res} for the proofs of Theorems \ref{theo:mainC} and \ref{theo:mainD}, in which some preparatory results from previous sections are combined to prove the global Lorentz and Lorentz-Morrey gradient norm regularity.

\section{Preliminaries}\label{sec:pre}

This section will introduce some convenient notations, assumptions and formulation of the problem~\eqref{eq:main}; review a number of the most important definitions and collect some additional preliminary results that related to our study in this paper. For further details, the interested reader can also consult the literature through the mentioned citations therein.
\subsection{Notation}
For notational simplicity, in the entirety of the paper we shall regard $\Omega \subset \mathbb{R}^n$ an open bounded domain, for $n \ge 2$. Generic positive constant will be denoted with a special letter $C$. The exact value of $C$ is unimportant, it may vary from  one occurence to the next and we still denote $C$ during chains of estimates. We note that the dependencies of constants $C$ on parameters will be clarified using parentheses. Further, we also employ specific constants with $C_1, C_2,$ et cetera, when needed. In the context, as usual, we write $B_\rho(x_0)$ for the ball of center $x_0$ and radius $\rho>0$. For $\mathcal{D} \subset \mathbb{R}^n$ being a measurable subset, let $h \in L^1_{\mathrm{loc}}(\mathbb{R}^n)$ be a measurable map, the integral average of $h$ over $\mathcal{D}$ will be denoted by
\begin{align*}
\fint_\mathcal{D}{h(x)dx} = \frac{1}{\mathcal{L}^n(\mathcal{D})}\int_\mathcal{D}{h(x)dx}.
\end{align*}
Moreover, the diameter of $\Omega$ will be denoted by $\mathrm{diam}(\Omega)$ for short, defined as
\begin{align*}
D_0 = \mathrm{diam}(\Omega) = \sup_{z_1,z_2 \in \Omega}{|z_1-z_2|}.
\end{align*}

\subsection{General assumptions}
\textbf{Assumption on domain (H1).} In this paper, our domain $\Omega \subset \mathbb{R}^n$ is assumed to satisfy the uniform capacity density condition. More precisely, the complement $\Omega^c:=\mathbb{R}^n \setminus \Omega$ satisfies the $p$-capacity uniform thickness. To our knowledge, this condition is very important for the existence of a solution to our problem and for a higher integrability property of the gradient. Let us now recall the definition of such domain as follows.
\begin{definition}[Domain with $p$-capacity condition]
\label{def:pcapathick}
We say that the complement set of $\Omega$ in $\mathbb{R}^n$, denoted $\Omega^c$, satisfies the \emph{uniformly $p$-thick condition} if there exist two constants $c_0,r_0>0$ such that
\begin{align}
\label{eq:capuni}
\mathrm{cap}_p(\Omega^c \cap \overline{B}_r(\zeta), B_{2r}(\zeta)) \ge c_0 \mathrm{cap}_p(\overline{B}_r(\zeta),B_{2r}(\zeta)),
\end{align}
for any $\zeta \in \Omega^c$ and $0<r \le r_0$.
\end{definition}
For the readers' convenience, we also include here the definition $p$-capacity of a set. 
\begin{definition}[$p$-capacity]
\label{def:pcapaset}
Let $p>1$ and $Q$ be a compact subset of $\Omega$, we define the $p$-capacity of $Q$ by:
\begin{align*}
\mathrm{cap}_p(Q,\Omega) = \inf_{\psi \in K}  \int_\Omega{|\nabla \psi|^p dx}, 
\end{align*}
where $K = \left\{\psi \in C_c^\infty(\Omega), \, \psi \ge \chi_Q \right\}$ and $\chi_Q$ denotes the characteristic function of $Q$.
\end{definition}
The definition of $p$-capacity can also be extended to capacity of any open set $\mathbb{O} \subseteq \Omega$ by 
\begin{align*}
\mathrm{cap}_p(\mathbb{O},\Omega) = \sup_{Q \subseteq \mathbb{O}, \, Q \, \text{compact}} \mathrm{cap}_p(Q,\Omega).
\end{align*}
Finally, for an any subset $E \subseteq \Omega$, one defines the $p$-capacity of $E$  by:
\begin{align*}
\mathrm{cap}_p(E,\Omega) = \inf_{\mathbb{O} \subseteq E, \, \mathbb{O} \, \text{open}} \mathrm{cap}_p(\mathbb{O},\Omega).
\end{align*}
As far as we know, the class of domains satisfy definition of $p$-capacity uniform thickness includes all domains with Lipschitz boundaries or  satisfy a uniform corkscrew condition (see~\cite{HKM1993} for further reading), and~\eqref{eq:capuni} still remains valid for balls centered outside a uniformly $p$-thick domain. Moreover, the uniform $p$-capacity is necessary for the validity of Poincar\'e's and Sobolev's inequalities, that are very helpful in our proofs later. For further properties of the $p$-capacity condition, we refer to the books~\cite[Chater 2]{Mazya1985} and~\cite[Chapter 2]{HKM1993}.

\textbf{Assumption on the nonlinearity (H2).} As usual, in the setting of equation~$-\mathrm{div}(A(x,\nabla u))=\mu$, the nonlinear operator $A: \Omega \times \mathbb{R}^n \rightarrow \mathbb{R}^n$ is a Carath\'eodory function. Moreover, there exist $1<p\le n$ and two constants $\Lambda_1$, $\Lambda_2>0$ such that 
\begin{align}\label{eq:A1}
\left| A(x,\zeta) \right| &\le \Lambda_1 |\zeta|^{p-1}, \tag{A1}
\end{align}
and
\begin{align}
\label{eq:A2}
\langle A(x,\zeta)-A(x,\eta), \zeta - \eta \rangle &\ge \Lambda_2 \left( |\zeta| + |\eta| \right)^{p-2}|\zeta - \eta|^2 \tag{A2}
\end{align}
holds for a.e. $x \in \Omega$ and every $(\zeta,\eta) \in \mathbb{R}^n \times \mathbb{R}^n \setminus \{(0,0)\}$.
From above conditions \eqref{eq:A1} and \eqref{eq:A2}, one recognizes that the operator $A$ is defined on $W_0^{1,p}(\Omega)$ with values in its dual space $W^{-1,p'}(\Omega)$ ($p'$ is the H\"older conjugate of $p$). 

\textbf{Assumption on $p$ (H3).} As aforementioned, in this paper, the growth exponent $p$ is considered as a real number in following range
\begin{align}\label{ass:p}
1<p \le \displaystyle{\frac{3n-2}{2n-1}}.
\end{align}

\textbf{Assumption on the measure datum $\mu$ (H4).} Let us consider $\mathfrak{M}_b(\Omega)$ as the set of all Radon measures with bounded total variation on $\Omega$ and $C_b(\Omega)$ as the set of all bounded, continuous functions on $\Omega$, such that $\int_\Omega{\psi d\mu}<+\infty$ for all $\psi \in C_b(\Omega)$ and $\mu \in \mathfrak{M}_b(\Omega)$. One denotes by $\mu^+$, $\mu^-$ and $|\mu|$ respectively, the positive part, negative part and the total variation of a measure $\mu$ in $\mathfrak{M}_b(\Omega)$.
\begin{definition}
\label{def:convergeMes}
We say that $(\mu_n)_n$ converges to $\mu$ in $\mathfrak{M}_b(\Omega)$ in a narrow topology if 
\begin{align}
\label{eq:convergeMes}
\lim_{n \to +\infty}{\int_\Omega{\psi d\mu_n}} = \int_\Omega{\psi d\mu},
\end{align}
for every $\psi \in C_b(\Omega)$. 
\end{definition}
One defines $\mathfrak{M}_0(\Omega)$ as the set of $\mu \in \mathfrak{M}_b(\Omega)$ such that $\mu(Q)=0$ for every Borel set $Q \subseteq \Omega$ with $\mathrm{cap}_p(Q,\Omega)=0$. We also define by $\mathfrak{M}_s(\Omega)$ the set of $\mu \in\mathfrak{M}_b(\Omega)$ for which one can find a Borel set $E \subset \Omega$ with $\mathrm{cap}_p(E,\Omega)=0$ such that $\mu(Q) = \mu(E \cap Q)$ for any Borel set $Q \subseteq \Omega$.
\begin{remark}
\label{rem:pairmeas}
For any $\mu \in \mathfrak{M}_b(\Omega)$, one can find a unique pair $(\mu_0, \mu_s)$ such that $\mu = \mu_0+\mu_s$, with $\mu_0 \in \mathfrak{M}_0(\Omega)$ and $\mu_s \in \mathfrak{M}_s(\Omega)$. Moreover if $\mu$ is nonnegative, then $\mu_0$ and $\mu_s$ are also nonnegative. Hence, the measures $\mu_s$ and $\mu_0$ will be called the \emph{singular} and the \emph{absolutely continuous} part of $\mu$.
\end{remark}

Throughout this paper, we shall work with the renormalized solution, where the datum $\mu$ satisfies assumption (H4). However, we firstly recall the definition of weak solutions in sense of distributions. 
\begin{definition}[Distributional solution]\label{def:weaksol}
We say that $w \in W^{1,p}_0(\Omega)$ is a weak solution to equation~\eqref{eq:main} if the variational formula
\begin{align*}
\int_{\Omega} \langle A(x,\nabla w), \nabla \psi\rangle dx = \int_{\Omega} \langle \mu, \psi\rangle dx,
\end{align*}
is valid for every $\psi \in W^{1,p}_0(\Omega)$.
\end{definition}
The problem~\eqref{eq:main} does not admit a weak solution under these above assumptions. However, in general, ones can expect to establish a notion of weak solution such that we can prove the existence and uniqueness of such solution. The concept of renormalized solution, was first introduced by R.~J. DiPerna \emph{et al.} in~\cite{DLion1989} when studying of the Boltzmann equation, and then adapted to nonlinear elliptic problems with Dirichlet boundary conditions by L. Boccardo \emph{et al.} in~\cite{Boccardo1989}. An equivalent notion of solutions, called entropy solution, was then introduced by P. B\'enilan \emph{et al.} in~\cite{Benilan1995}.

Let us also recall the definition of \emph{renormalized solution}, that was early studied in~\cite{BMMP, Boccardo1996, Maso1999}. To do this, we first introduce the truncature operator. For $k>0$, we follow the notation of \emph{truncation} operator at level $\pm k$, that is $T_k:\mathbb{R} \to \mathbb{R}$ defined by
\begin{align}\notag 
T_k (s) = \max\left\{ -k; \min\{s,k\}\right\}, \quad  s \in \mathbb{R},
\end{align}
that belongs to $W_0^{1,p}(\Omega)$, which satisfies $-\mathrm{div} A(x,\nabla T_k(u)) = \mu_k$ in the sense of distribution for a finite measure $\mu_k$ in $\Omega$. 
\begin{definition}\label{def:truncature}
Let $u$ be a given measurable function that is finite almost everywhere in $\Omega$, and satisfies $T_k(u) \in W^{1,1}_0(\Omega)$ for all $k>0$. Then, there is a unique measurable function $v: \Omega \to \mathbb{R}^n$ satisfying:
\begin{align}\notag
\nabla T_k(u) = \chi_{\{|u| \le k\}} v , \quad \text{a.e. in} \ \ \Omega, \quad \text{for any} \ k>0.
\end{align}
\end{definition}
Function $v$ here is so-called ``generalized distributional gradient'' of $u$ and in this paper it is still written by $\nabla u$ when no ambiguity will arise. One notices that if $u \in L^1_{loc}(\Omega)$, this function differs from the distributional gradient of $u$, and it coincides exactly with the usual gradient for every $u \in W^{1,1}(\Omega)$.

There are several equivalent definitions of \emph{renormalized solutions} (see \cite{Maso1997, Maso1999} and related references), here we will use the following notion of ``very weak'' solution.

\begin{definition}[Renormalized solution]
\label{def:renormsol3}
Let $\mu = \mu_0+\mu_s \in \mathfrak{M}_b(\Omega)$ with $\mu_0 \in \mathfrak{M}_0(\Omega)$ and $\mu_s \in \mathfrak{M}_s(\Omega)$ and $u$ be a finite measurable function defined in $\Omega$. We say that $u$ is a renormalized solution of~\eqref{eq:main} if $u$ has the following properties
\begin{itemize}
\item $T_k(u) \in W^{1,p}_0(\Omega)$ for every $k>0$, $|{\nabla u}|\in L^{q(p-1)}(\Omega)$ for any $0<q<\frac{n}{n-1}$; 
\item for every $k>0$ one can find nonnegative Radon measures $\delta_k^+, \, \delta_k^- \in\mathfrak{M}_0(\Omega)$ concentrated on $\{u=k\}$ and $\{u=-k\}$,  such that $\delta_k^+\rightarrow\mu_s^+$, $\delta_k^-\rightarrow\mu_s^-$ in the narrow topology of measures and that
 \begin{align*}
 \int_{\{|u|<k\}}\langle A(x,\nabla u),\nabla \psi\rangle
  	dx=\int_{\{|u|<k\}}{\psi d}{\mu_{0}}+\int_{\Omega}\psi d\delta_k^+ -\int_{\Omega}\psi d\delta_k^-,
 \end{align*}
  	for any $\psi\in L^{\infty}(\Omega) \cap W^{1,p}_0(\Omega)$.
\end{itemize}
\end{definition}
The following Lemma~\ref{lem:nablau} were given in~\cite[Theorem 4.1]{Maso1999}, that characterizes the classical global Lebesgue gradient estimate for solution $u$ to~\eqref{eq:main} with given measure data $\mu \in L^1(\Omega)$ and the convergence result. And we refer the reader to~\cite{Benilan1995, Boccardo1992, Boccardo1996} for the proofs.
\begin{lemma}\label{lem:nablau}
Let $u$ be a renormalized solution to~\eqref{eq:main} with a given finite measure data $\mu$ in $\Omega$. Then one can find $C=C(n,p)>0$ such that 
\begin{align}\label{eq:nablau}
\|\nabla u\|_{L^{\tilde{p},\infty}(\Omega)}\leq C\left[|\mu|(\Omega)\right]^{\frac{1}{p-1}}, \quad \mbox{ with } \ \tilde{p} = \frac{(p-1)n}{n-1}.
\end{align}
\end{lemma}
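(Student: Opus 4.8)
\textbf{Proof proposal for Lemma~\ref{lem:nablau}.}

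The plan is to derive the weak-Lebesgue (Marcinkiewicz) bound for $\nabla u$ directly from the structural assumptions \eqref{eq:A1}--\eqref{eq:A2} and the renormalized-solution formulation, following the classical Boccardo--Gallou\"et approach as adapted to the renormalized setting in~\cite{Maso1999, Benilan1995}. First I would test the equation with the truncation $T_k(u)$ as admissible test function. Using the monotonicity/coercivity condition \eqref{eq:A2} together with \eqref{eq:A1} one obtains, after absorbing the contributions of the measures $\delta_k^{\pm}$ and $\mu_0$ (which are controlled by $k\,|\mu|(\Omega)$ since each $\delta_k^{\pm}$ has total mass bounded by $|\mu_s^{\pm}|(\Omega)$ and $\mu_0$ has mass $\le|\mu|(\Omega)$), the fundamental energy estimate
\begin{align*}
\int_{\{|u|<k\}} |\nabla u|^{p}\,dx \le C\,k\,|\mu|(\Omega), \qquad k>0,
\end{align*}
where $C=C(n,p)$. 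This is the only place the PDE is used; everything afterwards is a measure-theoretic interpolation.

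Next I would convert this level-wise energy bound into a distributional estimate for $|\nabla u|$. The standard device: for $\lambda>0$ and $k>0$, split $\{|\nabla u|>\lambda\}$ into $\{|\nabla u|>\lambda\}\cap\{|u|\le k\}$ and $\{|u|>k\}$. The first piece has measure $\le \lambda^{-p}\int_{\{|u|<k\}}|\nabla u|^p\,dx \le C k \lambda^{-p}|\mu|(\Omega)$ by Chebyshev and the energy estimate. For the second piece, one uses the Sobolev inequality applied to $T_k(u)\in W^{1,p}_0(\Omega)$ (valid here because $\Omega^c$ is uniformly $p$-thick, hence Sobolev's inequality holds) to get $\mathcal{L}^n(\{|u|>k\})\le k^{-p^*}\|T_k(u)\|_{L^{p^*}}^{p^*}\le C k^{-p^*}(\int_{\{|u|<k\}}|\nabla u|^p)^{p^*/p}\le C k^{-p^*}(k|\mu|(\Omega))^{p^*/p}$ with $p^*=np/(n-p)$; the exponent of $k$ is $p^*/p-p^*=-p^*(p-1)/p<0$. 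Adding the two bounds and optimizing in $k$ (choosing $k$ so the two terms balance) yields $\mathcal{L}^n(\{|\nabla u|>\lambda\})\le C\,\lambda^{-\tilde p}\,(|\mu|(\Omega))^{\tilde p/(p-1)}$ with $\tilde p=(p-1)n/(n-1)$, which is exactly \eqref{eq:nablau} after taking $\sup_\lambda \lambda\,\mathcal{L}^n(\{|\nabla u|>\lambda\})^{1/\tilde p}$.

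One technical wrinkle to handle carefully: since $u$ is only a renormalized (not finite-energy) solution, $\nabla u$ is the generalized gradient of Definition~\ref{def:truncature}, so the manipulations above must be carried out at the level of $T_k(u)$ and then passed to the limit $k\to\infty$ using $\nabla T_k(u)=\chi_{\{|u|\le k\}}\nabla u$ and monotone convergence; the role of the measures $\delta_k^{\pm}$ concentrated on $\{|u|=k\}$ is precisely to make $T_k(u)$ an admissible test function with a controlled right-hand side. The main obstacle is therefore not the interpolation—which is entirely standard—but bookkeeping the measure terms: one must check that $\int_\Omega T_k(u)\,d\mu_0+\int_\Omega T_k(u)\,d(\delta_k^+-\delta_k^-)\le k\big(|\mu_0|(\Omega)+\delta_k^+(\Omega)+\delta_k^-(\Omega)\big)\le C k\,|\mu|(\Omega)$ uniformly in $k$, using the narrow convergence $\delta_k^\pm\to\mu_s^\pm$ to bound the masses. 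Since the statement is quoted from~\cite[Theorem 4.1]{Maso1999}, in the write-up I would simply cite that reference together with~\cite{Benilan1995, Boccardo1992, Boccardo1996} and sketch only the energy estimate and the optimization in $k$.
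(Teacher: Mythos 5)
The paper offers no proof of Lemma~\ref{lem:nablau} at all: it is quoted verbatim from~\cite[Theorem 4.1]{Maso1999}, with~\cite{Benilan1995, Boccardo1992, Boccardo1996} cited for the proofs. Your reconstruction is precisely the classical Boccardo--Gallou\"et/B\'enilan et al.\ argument used in those references (truncation energy estimate, splitting of $\{|\nabla u|>\lambda\}$ according to $\{|u|\le k\}$ and $\{|u|>k\}$, Sobolev on $T_k(u)$, optimization in $k$), and the exponent bookkeeping is right: balancing $k\lambda^{-p}|\mu|(\Omega)$ against $k^{-p^*(p-1)/p}|\mu|(\Omega)^{p^*/p}$ does give $d_{\nabla u}(\lambda)\le C\lambda^{-\tilde p}[|\mu|(\Omega)]^{\tilde p/(p-1)}$ with $\tilde p=\frac{(p-1)n}{n-1}$, hence \eqref{eq:nablau}. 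So the proposal is correct in substance and consistent with how the cited literature proves the statement; since the paper only cites, your plan to cite and sketch is exactly what the authors do.

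Two small points of hygiene. First, with the formulation of Definition~\ref{def:renormsol3}, the claim that each $\delta_k^{\pm}$ has mass bounded by $\mu_s^{\pm}(\Omega)$ is not guaranteed for a \emph{fixed} $k$; narrow convergence only gives $\delta_k^{\pm}(\Omega)\to\mu_s^{\pm}(\Omega)$. Moreover, if you test the level-$k$ identity with $\psi=T_k(u)$, the singular terms contribute $+k\,\delta_k^{+}(\Omega)+k\,\delta_k^{-}(\Omega)$ (both positive, since $T_k(u)=\pm k$ on $\{u=\pm k\}$), so they cannot be discarded. The standard fix is to decouple the two levels: fix the truncation level $m$, test the identity at level $k>m$ with $\psi=T_m(u)$ (admissible, as $T_m(u)\in L^\infty(\Omega)\cap W^{1,p}_0(\Omega)$), note $\nabla T_m(u)$ vanishes outside $\{|u|<m\}$ and $|T_m(u)|\le m$, and then let $k\to\infty$ using the narrow convergence to obtain $\int_{\{|u|<m\}}|\nabla u|^p\,dx\le C\,m\,|\mu|(\Omega)$ for every $m>0$, with $C$ involving $\Lambda_2$ from \eqref{eq:A2}. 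Second, the appeal to the $p$-capacity thickness of $\Omega^c$ for Sobolev's inequality is unnecessary: $T_k(u)\in W^{1,p}_0(\Omega)$ extends by zero to $\mathbb{R}^n$, so the Sobolev inequality with exponent $p^*=\frac{np}{n-p}$ (note $p<n$ under the standing assumptions) holds on any bounded open set. With these adjustments the sketch is complete and matches the proof in the cited sources.
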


\begin{proposition}\label{prop:nablauconverge}
Let $\mu \in L^1(\Omega)$ and $u$ in $L^s(\Omega)$ the renormalized solution to~\eqref{eq:main} with measure data $\mu$. For every $k \in \mathbb{N}$, let $u_k$ be the renormalized solution of~\eqref{eq:main} with data $\mu_k \in L^{\frac{p}{p-1}}(\Omega)$ such that $\mu_k \to \mu$ weakly in $L^1(\Omega)$. Then, one can find a subsequence $\{u_{k_j}\}_{j}$ of $\{u_k\}_k$ such that 
\begin{align*}
\begin{cases} u_{k_j} \to u \mbox{ in } L^q(\Omega), \ \mbox{ for any } 0< q < \frac{(p-1)n}{n-p}, \\ \nabla u_{k_j} \to \nabla u \mbox{ in } L^q(\Omega), \ \mbox{ for any } 0<q<\frac{(p-1)n}{n-1}. \end{cases}
\end{align*}
\end{proposition}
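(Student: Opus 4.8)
\textbf{Proof proposal for Proposition~\ref{prop:nablauconverge}.}

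The plan is to obtain the convergences from a compactness argument based on the a priori bounds furnished by Lemma~\ref{lem:nablau} together with the known stability theory for renormalized solutions. First I would observe that, since $\mu_k \to \mu$ weakly in $L^1(\Omega)$, the sequence $(\mu_k)_k$ is bounded in $L^1(\Omega)$, hence bounded in $\mathfrak{M}_b(\Omega)$ with $\sup_k |\mu_k|(\Omega) < \infty$. Applying Lemma~\ref{lem:nablau} to each $u_k$ gives a uniform bound $\|\nabla u_k\|_{L^{\tilde p,\infty}(\Omega)} \le C[\sup_k |\mu_k|(\Omega)]^{1/(p-1)}$ with $\tilde p = \frac{(p-1)n}{n-1}$. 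Interpolating the weak-Lebesgue bound, for every $q < \tilde p$ one controls $\|\nabla u_k\|_{L^q(\Omega)}$ uniformly; combined with the Poincar\'e--Sobolev inequality valid on the uniformly $p$-thick domain $\Omega$ (or, more elementarily, the standard Sobolev embedding for the truncations $T_k(u_k) \in W^{1,p}_0(\Omega)$), this also yields a uniform bound on $\|u_k\|_{L^q(\Omega)}$ for every $q < \frac{(p-1)n}{n-p}$. So $(u_k)_k$ is bounded in $W^{1,q}_0(\Omega)$ for some $q>1$, and by Rellich--Kondrachov one extracts a subsequence $(u_{k_j})_j$ converging strongly in $L^q(\Omega)$ and a.e., and with $\nabla u_{k_j}$ converging weakly in $L^q(\Omega)$, for the indicated ranges of $q$.

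The next step is to upgrade the weak gradient convergence to strong convergence in $L^q(\Omega)$ and to identify the limit as the renormalized solution with datum $\mu$. For the identification, I would invoke the stability theorem for renormalized solutions of~\cite{Maso1999} (Theorem~3.4 therein): if $\mu_k \to \mu$ in $\mathfrak{M}_b(\Omega)$ in the sense required by that theorem (which is guaranteed here because $\mu_k \to \mu$ weakly in $L^1$, so in particular narrowly, and the singular parts are absent), then the renormalized solutions $u_k$ converge, up to a subsequence, a.e.\ to the renormalized solution $u$ of~\eqref{eq:main} with datum $\mu$, and the ``truncated energies'' converge, from which one deduces $\nabla u_{k_j} \to \nabla u$ a.e.\ in $\Omega$. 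Having a.e.\ convergence of the gradients together with the uniform $L^{\tilde p,\infty}$ bound, the Vitali convergence theorem (equiintegrability of $|\nabla u_{k_j}|^q$ in $L^1(\Omega)$ for $q<\tilde p$, which follows from the uniform weak-$L^{\tilde p}$ bound since $q<\tilde p$) promotes this to strong convergence $\nabla u_{k_j} \to \nabla u$ in $L^q(\Omega)$ for every $0<q<\frac{(p-1)n}{n-1}$; the same Vitali argument, using the uniform bound on $\|u_{k_j}\|_{L^{q'}}$ for $q'$ slightly above $q$, gives $u_{k_j}\to u$ in $L^q(\Omega)$ for every $0<q<\frac{(p-1)n}{n-p}$.

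The main obstacle I anticipate is the a.e.\ convergence of the gradients: weak $L^q$ compactness alone does not give it, because the operator $A$ is nonlinear, and one cannot pass to the limit in $\int \langle A(x,\nabla u_k), \nabla\psi\rangle$ without some form of strong compactness of $\nabla u_k$. This is precisely the content of the delicate part of Dal~Maso--Murat--Orsina--Prignet's stability theorem, whose proof goes through a careful analysis of the truncations $T_k(u_k)$, a Minty--Browder monotonicity argument localized on the sets $\{|u_k|<k\}$, and control of the measures $\delta_k^\pm$; here the situation is somewhat easier than in the general theorem since the approximating data $\mu_k$ have no singular part and lie in $L^{p'}(\Omega)$, so the $u_k$ are in fact finite-energy weak solutions, but one still needs the monotonicity inequality~\eqref{eq:A2} and the equiintegrability provided by Lemma~\ref{lem:nablau} to run the argument. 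Once a.e.\ convergence of $\nabla u_{k_j}$ is in hand, the remaining steps are routine applications of Vitali's theorem and of the embeddings available on uniformly $p$-thick domains.
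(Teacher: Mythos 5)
The paper itself gives no proof of this proposition: it is quoted from the literature (the text points to \cite{Maso1999, Benilan1995, Boccardo1992, Boccardo1996}), and your proposal follows exactly that standard route — uniform Marcinkiewicz bounds from Lemma~\ref{lem:nablau}, the Dal Maso--Murat--Orsina--Prignet stability theorem to get a.e.\ convergence of $u_{k_j}$ and $\nabla u_{k_j}$ and to identify the limit (unique here since $\mu\in L^1(\Omega)$, and weak $L^1$ convergence of $\mu_k$ is indeed the special case $g_k=0$, $\lambda_k^{\pm}=0$ of the hypotheses of that theorem), and finally Vitali's theorem. In substance this is the intended argument.

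One intermediate step is wrong precisely in the regime this paper is about. You claim that $(u_k)_k$ is bounded in $W^{1,q}_0(\Omega)$ for some $q>1$ and invoke Rellich--Kondrachov. But $\tilde p=\frac{(p-1)n}{n-1}\ge 1$ only when $p\ge 2-\frac1n$; under the standing assumption $1<p\le\frac{3n-2}{2n-1}$ one has $\tilde p\le\frac{n}{2n-1}<1$ and likewise $\frac{(p-1)n}{n-p}\le\frac{n}{2(n-1)}\le 1$, so the uniform bounds furnished by Lemma~\ref{lem:nablau} and Sobolev embedding live only in (weak) Lebesgue spaces with exponents at or below $1$, and no $W^{1,q}$ compactness with $q\ge 1$ is available. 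This misstep is, however, redundant in your own scheme: the stability theorem already delivers the a.e.\ convergence of $u_{k_j}$ and $\nabla u_{k_j}$, and Vitali's theorem applied to $|u_{k_j}-u|^q$ and $|\nabla u_{k_j}-\nabla u|^q$ (equiintegrable thanks to the uniform weak-$L^{\frac{(p-1)n}{n-p}}$ and weak-$L^{\tilde p}$ bounds, an argument that works equally well for exponents below one, where $L^q$ is only a quasi-normed space) gives the stated strong convergences. With the Rellich--Kondrachov paragraph deleted, the proof is correct and coincides with the cited one.
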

\subsection{Function spaces}
\begin{definition}[Distribution function]\label{def:dist}
For every measurable function $g$ on $\Omega$ and $Q \subset \mathbb{R}^n$, the distribution function $d_g(Q;\cdot)$ of $g$ is defined in $\mathbb{R}^+$ as follows
\begin{align}\label{def-Df-2}
d_g(Q;\lambda) := \mathcal{L}^n \left(\left\{x \in Q \cap \Omega: \ |g(x)|> \lambda\right\}\right), \quad \lambda \ge 0.
\end{align}
If $\Omega \subset Q$, we write $d_g(\lambda)$ instead of $d_g(\Omega;\lambda)$ for simplicity.
\end{definition}
Using this definition of the distribution function one can rewrite the definition of Lorentz spaces as below.
\begin{definition}[Lorentz spaces]
For $0<s<\infty$ and $0<t\le \infty$, we denote by $L^{s,t}(\Omega)$ (see~\cite{55Gra}) the Lorentz space containing all Lebesgue measurable functions $g$ such that
\begin{align}
\label{eq:lorentz}
\|g\|_{L^{s,t}(\Omega)} = \left[s\int_0^\infty{\lambda^{t-1} [d_g(\lambda)]^{\frac{t}{s}} d\lambda} \right]^{\frac{1}{t}} < +\infty.
\end{align} 
\label{def:Lorentz}
\end{definition}
When $t = \infty$, $L^{s,\infty}(\Omega)$ is known as the usual weak-$L^s$ space or Marcinkiewicz space with the following quasinorm
\begin{align}\notag
\|g\|_{L^{s,\infty}(\Omega)} = \sup_{\lambda>0}{\lambda [d_g(\lambda)]^{\frac{1}{s}}}.
\end{align}
Cavalieri's principle shows that if $s=t$, then the Lorentz space $L^{s,s}(\Omega)$ becomes the usual Lebesgue space $L^s(\Omega)$. More precisely, the spaces are nested increasingly with respect to the second parameter $t$:
\begin{align*}
L^{s,1}(\Omega) \subset L^{s,t}(\Omega) \subset L^{s,\infty}(\Omega).
\end{align*}
\begin{definition}[Lorentz-Morrey spaces]
\label{def:Lorentz-Morrey}
 For $0<s<\infty$, $0<t \le \infty$ and for some $0<\kappa \le n$, the Lorentz-Morrey space $L^{s,t;\kappa}(\Omega)$ contains all functions $g \in L^{s,t}(\Omega)$ such that
\begin{align}\label{eq:LMsp}
\|g\|_{L^{s,t;\kappa}(\Omega)}=\sup_{\varrho\in(0,D_0); \, x \in \Omega}{\varrho^{\frac{\kappa-n}{s}}}\|g\|_{L^{s,t}(\Omega \cap B_\varrho(x))} < +\infty.
\end{align}
In the case $\kappa = n$ the space $L^{s,t;n}(\Omega)$ coincides exactly with the Lorentz space $L^{s,t}(\Omega)$.
\end{definition}
\begin{definition}[Fractional maximal functions, \cite{K1997}]
\label{def:Malpha}
The fractional maximal function $\mathbf{M}_\alpha$ for $0 \le \alpha \le n$ is defined by:
\begin{align}\label{eq:Malpha}
\mathbf{M}_\alpha g(x) = \sup_{\rho>0}{\rho^\alpha \fint_{B_\rho(x)}{|g(y)|dy}},~~ \forall x \in \mathbb{R}^n, \quad \mbox{for } g \in L_{loc}^1(\mathbb{R}^n).
\end{align}
\end{definition}
For the case $\alpha=0$, one obtains the Hardy-Littlewood maximal function, $\mathbf{M}g = \mathbf{M}_0g$ for $\mbox{for } g \in L_{loc}^1(\mathbb{R}^n)$ defined  by:
\begin{align}\label{eq:M0}
\mathbf{M}g(x) = \sup_{\rho>0}{\fint_{B_{\rho}(x)}|g(y)|dy},~~ \forall x \in \mathbb{R}^n.
\end{align}
The cut-off function of $\mathbf{M}$ at level $r>0$ will be denoted by
\begin{align*}
\mathbf{M}^rg(x) = \sup_{0<\rho<r}{\fint_{B_{\rho}(x)}|g(y)|dy},~~ \forall x \in \mathbb{R}^n.
\end{align*}
The well-known result of maximal operator is the bounded property on $L^p(\mathbb{R}^n)$ for $1< p \le \infty$, that is there exists $C=C(n,p)>0$ such that:
\begin{align*}
\|\mathbf{M}g\|_{L^p(\mathbb{R}^n)} \le C \|g\|_{L^p(\mathbb{R}^n)}, \quad \forall g \in L^p(\mathbb{R}^n).
\end{align*}
Moreover, one knows that $\mathbf{M}$ is \emph{weak-type (1,1)}, it means that there is  $C=C(n)>0$ such that 
\begin{align*}
d_{\mathbf{M}g}(\lambda) \le \frac{C}{\lambda} \|g\|_{L^1(\mathbb{R}^n)}, \quad \forall \lambda>0, \ g \in L^1(\mathbb{R}^n).
\end{align*}
The standard and classical references can be found in many places such as \cite{55Gra}. Here we recall the useful bounded properties that we use in the present paper.
\begin{lemma}[\cite{55Gra}]
\label{lem:boundM}
The maximal operator $\mathbf{M}$ is bounded from $L^s(\mathbb{R}^n)$ to $L^{s,\infty}(\mathbb{R}^n)$ for every $s \ge 1$, this means
\begin{align}\notag
d_{\mathbf{M}g}(\lambda) \le \frac{C}{\lambda^s}\int_{\mathbb{R}^n}{|g(x)|^s dx}, \quad \mbox{ for all } \lambda>0.
\end{align}
\end{lemma}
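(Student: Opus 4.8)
The plan is to split the argument according to whether $s=1$ or $s>1$ and, in each case, to reduce the claimed estimate to a property of $\mathbf{M}$ already recorded in the excerpt. For the endpoint $s=1$, the asserted inequality $d_{\mathbf{M}g}(\lambda)\le C\lambda^{-1}\int_{\mathbb{R}^n}|g|\,dx$ is precisely the weak-type $(1,1)$ bound for $\mathbf{M}$ stated just above, so there is nothing to do in that case.

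For $s>1$ I would invoke the strong-type $(s,s)$ bound $\|\mathbf{M}g\|_{L^s(\mathbb{R}^n)}\le C\|g\|_{L^s(\mathbb{R}^n)}$ (also recorded above) together with Chebyshev's inequality. Indeed, for every $\lambda>0$,
\[
\lambda^s\,d_{\mathbf{M}g}(\lambda)=\lambda^s\,\mathcal{L}^n\big(\{\mathbf{M}g>\lambda\}\big)\le\int_{\{\mathbf{M}g>\lambda\}}(\mathbf{M}g)^s\,dx\le\|\mathbf{M}g\|_{L^s(\mathbb{R}^n)}^s\le C^s\int_{\mathbb{R}^n}|g(x)|^s\,dx,
\]
and dividing by $\lambda^s$ and renaming the constant gives the conclusion. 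Together with the $s=1$ case this already establishes the lemma.

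If instead a self-contained argument were wanted, the only substantive ingredient is the weak-type $(1,1)$ estimate: one covers the open set $\{\mathbf{M}g>\lambda\}$ by balls $B$ with $\fint_B|g|>\lambda$, extracts a countable pairwise disjoint subfamily $\{B_i\}$ via the Vitali covering lemma (with the usual $5$-fold dilation), and sums $\mathcal{L}^n(5B_i)=5^n\mathcal{L}^n(B_i)\le 5^n\lambda^{-1}\int_{B_i}|g|$ over the disjoint $B_i$ to obtain $\mathcal{L}^n(\{\mathbf{M}g>\lambda\})\le 5^n\lambda^{-1}\|g\|_{L^1(\mathbb{R}^n)}$; the case $s>1$ then follows either from the Chebyshev computation above (once the $L^s$ bound is in hand) or directly by the truncation $g=g\chi_{\{|g|>\lambda/2\}}+g\chi_{\{|g|\le\lambda/2\}}$, the pointwise bound $\mathbf{M}(g\chi_{\{|g|\le\lambda/2\}})\le\lambda/2$, and integration of the weak-$(1,1)$ estimate for the first piece against $\lambda^{s-1}\,d\lambda$. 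Thus the real work is hidden in the covering lemma; since the needed inputs are already available in the excerpt, there is no genuine obstacle here and the statement follows immediately.
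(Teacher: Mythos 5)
Your proof is correct. Note that the paper gives no proof of this lemma at all: it is quoted as a classical fact with a citation to Grafakos, and the two properties you lean on (the weak-$(1,1)$ estimate and the $L^s$-boundedness of $\mathbf{M}$ for $s>1$) are likewise only recorded, not proved, just before the statement. Your route --- the weak-$(1,1)$ bound for the endpoint $s=1$ and Chebyshev's inequality combined with the strong $(s,s)$ bound for $s>1$, with the Vitali covering argument available if one insists on self-containment --- is exactly the standard textbook argument the citation points to, so there is nothing to reconcile with the paper's (nonexistent) proof.
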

\begin{lemma}[\cite{55Gra}]
\label{lem:boundMlorentz}
The operator $\mathbf{M}$ is bounded in the Lorentz space $L^{s,t}(\mathbb{R}^n)$, for $s>1$ and $0<t\le \infty$, this means
\begin{align}\notag
\|\mathbf{M}g\|_{L^{s,t}(\mathbb{R}^n)} \le C \|g\|_{L^{s,t}(\mathbb{R}^n)}.
\end{align}
\end{lemma}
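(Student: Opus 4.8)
The plan is to deduce the boundedness of $\mathbf{M}$ on $L^{s,t}(\mathbb{R}^n)$ from just two endpoint facts: the weak-$(1,1)$ estimate already recorded (the $s=1$ case of Lemma~\ref{lem:boundM}), $d_{\mathbf{M}g}(\lambda)\le C\lambda^{-1}\|g\|_{L^1(\mathbb{R}^n)}$, and the elementary $L^\infty$ bound $\|\mathbf{M}g\|_{L^\infty(\mathbb{R}^n)}\le\|g\|_{L^\infty(\mathbb{R}^n)}$, which is immediate from the definition~\eqref{eq:M0}. One may then simply invoke the Marcinkiewicz interpolation theorem in its Lorentz-space form: a sublinear operator of weak types $(1,1)$ and $(\infty,\infty)$ maps $L^{s,t}(\mathbb{R}^n)$ boundedly into itself for every $1<s<\infty$ and $0<t\le\infty$. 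For a self-contained argument one proceeds directly, as outlined next.

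Fix $\lambda>0$ and split $g=g_1+g_2$ with $g_1=g\,\chi_{\{|g|>\lambda/2\}}$ and $g_2=g\,\chi_{\{|g|\le\lambda/2\}}$. Sublinearity of $\mathbf{M}$ together with $\|\mathbf{M}g_2\|_{L^\infty(\mathbb{R}^n)}\le\lambda/2$ gives $\{\mathbf{M}g>\lambda\}\subseteq\{\mathbf{M}g_1>\lambda/2\}$, and applying the weak-$(1,1)$ bound to $g_1$ and using the layer-cake formula yields the pointwise-in-$\lambda$ inequality
\begin{align}\notag
d_{\mathbf{M}g}(\lambda)\ \le\ \frac{C}{\lambda}\int_{\{|g|>\lambda/2\}}|g|\,dx\ =\ C\,d_g(\lambda/2)+\frac{C}{\lambda}\int_{\lambda/2}^{\infty}d_g(\tau)\,d\tau .
\end{align}
Everything else is integration of this estimate against the Lorentz measure. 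When $t=\infty$, one inserts $d_g(\tau)\le\tau^{-s}\|g\|_{L^{s,\infty}(\mathbb{R}^n)}^{s}$ into the tail integral, which converges precisely because $s>1$, and obtains $d_{\mathbf{M}g}(\lambda)\le C\lambda^{-s}\|g\|_{L^{s,\infty}(\mathbb{R}^n)}^{s}$, i.e. the claim. When $0<t<\infty$, one raises the displayed inequality to the power $t/s$, multiplies by $\lambda^{t-1}$, integrates over $\lambda\in(0,\infty)$, and separates the two resulting terms using the quasi-triangle inequality with exponent $\min\{1,t/s\}$: the first term is $C\|g\|_{L^{s,t}(\mathbb{R}^n)}^{t}$ after the substitution $\lambda\mapsto2\lambda$ in~\eqref{eq:lorentz}, and the second is handled by a weighted Hardy inequality for the nonincreasing function $\tau\mapsto d_g(\tau)$, again producing $C\|g\|_{L^{s,t}(\mathbb{R}^n)}^{t}$, with $C=C(n,s,t)$.

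The main obstacle is the tail term $\lambda^{-1}\int_{\lambda/2}^{\infty}d_g(\tau)\,d\tau$: its finiteness is exactly where the hypothesis $s>1$ is used (the estimate genuinely fails for $s=1$, where $\mathbf{M}$ is unbounded on both $L^1$ and $L^{1,\infty}$), and for $t<\infty$ one must apply the appropriate weighted Hardy inequality, taking care that when $t/s<1$ the Lorentz functional is only a quasinorm so that the ordinary triangle inequality has to be replaced by its $\min\{1,t/s\}$-power version. Once these two points are settled, the chain of inequalities is entirely routine.
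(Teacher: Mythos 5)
Your argument is correct. There is nothing in the paper to compare it with: the lemma is quoted from \cite{55Gra} without proof, and what you give is essentially the standard argument from that reference. Both of your routes work: the short one (the weak-$(1,1)$ bound already recorded as the $s=1$ case of Lemma~\ref{lem:boundM}, the trivial bound $\|\mathbf{M}g\|_{L^\infty}\le\|g\|_{L^\infty}$, and Marcinkiewicz interpolation in its Lorentz-space form), and the self-contained one via the splitting $g=g_1+g_2$ at height $\lambda/2$ and integration of the resulting distributional inequality against the Lorentz functional. Two minor remarks. First, the displayed ``equality'' after the layer-cake formula is really $\frac{C}{\lambda}\int_{\{|g|>\lambda/2\}}|g|\,dx=\frac{C}{2}\,d_g(\lambda/2)+\frac{C}{\lambda}\int_{\lambda/2}^{\infty}d_g(\tau)\,d\tau$, i.e.\ it holds only up to a harmless constant, which does not affect anything. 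Second, you correctly isolate the one delicate point: for $0<t/s<1$ the dual Hardy inequality fails for general nonnegative integrands and must be applied to the nonincreasing function $\tau\mapsto d_g(\tau)$; a clean way to make this explicit is the dyadic blocking $\int_{\lambda}^{\infty}d_g(\tau)\,d\tau\le\sum_{k\ge0}2^{k}\lambda\,d_g(2^{k}\lambda)$ combined with the subadditivity of $x\mapsto x^{t/s}$, the geometric series converging because the weight exponent $t(1-1/s)$ is positive, while for $t/s\ge1$ the classical dual Hardy inequality with that weight suffices. The hypothesis $s>1$ enters exactly there and in the convergence of the tail integral in the $t=\infty$ case, as you note, and your side remark that boundedness fails at $s=1$ (on both $L^1$ and $L^{1,\infty}$) is also accurate.
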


\section{Preliminary technical lemmas}
\label{sec:prelem}
The purpose of this section is to construct, state and prove some technical lemmas that will be necessary to use later. Furthermore, a series of comparison procedures between solutions of~\eqref{eq:main} in $\Omega$ and the ones of homogeneous equations in arbitrary balls, that are very important to obtain our main results.

In the remaining parts of this paper, we always mention $u$-the solution of our problem~\eqref{eq:main}, the \emph{renormalized solution} in which the existence and uniqueness always make sense. In addition, for the sake of simplicity, we assume in useful lemmas below that domain $\Omega$ satisfies the hypothesis (H1), under the assumption (H3) and two parameters $m^*$ and $m^{**}$ are clarified as in~\eqref{eq:mstar}.
\subsection{Preparatory lemmas}
\begin{lemma}\label{lem:grad-u}
Let $\mu \in L^{m}(\Omega)$ for some $m \in (1,m^{**})$ and $u$ be a renormalized solution to~\eqref{eq:main}. Then $\nabla u \in L^{\frac{nm (p-1)}{n-m}}(\Omega)$ and there exists a positive constant $C$ such that
\begin{align}\label{eq:grad-u}
\|\nabla u\|_{L^{\frac{nm(p-1)}{n-m}}(\Omega)}\leq C \|\mu\|_{L^{m}(\Omega)}^{\frac{1}{p-1}}.
\end{align}
\end{lemma}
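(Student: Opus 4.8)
The plan is to derive this global Lebesgue gradient estimate from the classical Marcinkiewicz-type bound in Lemma~\ref{lem:nablau} by interpolating with the elliptic energy estimate, or more directly by invoking the known sharp regularity result for renormalized solutions of measure data problems. First I would recall the baseline fact, due to Boccardo--Gallou\"et and the renormalized solution theory, that a renormalized solution $u$ to~\eqref{eq:main} with $\mu \in \mathfrak{M}_b(\Omega)$ satisfies $\nabla u \in L^{\tilde p,\infty}(\Omega)$ with $\tilde p = \frac{(p-1)n}{n-1}$ and $\|\nabla u\|_{L^{\tilde p,\infty}(\Omega)} \le C[|\mu|(\Omega)]^{\frac{1}{p-1}}$; this is exactly Lemma~\ref{lem:nablau}. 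The point is that when $\mu$ is not just a measure but lies in $L^m(\Omega)$ with $1 < m < m^{**} = \frac{pn}{(p-1)n+p}$, the diffuse part of the datum has better integrability and one expects the improved Lebesgue exponent $\frac{nm(p-1)}{n-m}$, which is precisely the exponent appearing in the classical Boccardo--Gallou\"et result $u \in W^{1,\frac{nm(p-1)}{n-m}}_0(\Omega)$ recalled in the Introduction.

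The cleanest route I would take is an approximation argument combined with a Stampacchia-type truncation/interpolation. Approximate $\mu$ by smooth data $\mu_k \in L^{p'}(\Omega)$ with $\mu_k \to \mu$ in $L^m(\Omega)$ (mollification suffices since $\mu \in L^m$, $m>1$), and let $u_k$ be the corresponding renormalized (here in fact weak energy) solutions. For each fixed $k$ one tests the equation with truncations $T_j(u_k)$ and uses the coercivity~\eqref{eq:A2} together with the Sobolev inequality on $\Omega$ (valid here thanks to the uniform $p$-capacity density condition (H1)) to run the standard level-set / Hölder iteration of Boccardo--Gallou\"et: this yields, uniformly in $k$,
\begin{align}\notag
\|\nabla u_k\|_{L^{\frac{nm(p-1)}{n-m}}(\Omega)} \le C\,\|\mu_k\|_{L^m(\Omega)}^{\frac{1}{p-1}}.
\end{align}
The constraint $m < m^{**}$ is exactly what guarantees $\frac{nm(p-1)}{n-m} > 1$ (equivalently $\frac{nm(p-1)}{n-m} \ge$ the dual Sobolev threshold that makes the iteration close), so the truncation estimate is genuinely a gradient estimate and not merely a statement about $T_j(u_k)$. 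Then I would pass to the limit: by Proposition~\ref{prop:nablauconverge} a subsequence of $\nabla u_k$ converges to $\nabla u$ in $L^q(\Omega)$ for every $q < \frac{(p-1)n}{n-1}$, hence a.e. after passing to a further subsequence, and Fatou's lemma upgrades the uniform bound to $\nabla u \in L^{\frac{nm(p-1)}{n-m}}(\Omega)$ with the same constant, since $\|\mu_k\|_{L^m(\Omega)} \to \|\mu\|_{L^m(\Omega)}$.

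I expect the main obstacle to be bookkeeping rather than conceptual: namely making the Boccardo--Gallou\"et truncation iteration fully rigorous at the level of renormalized solutions (where one only has $T_k(u) \in W^{1,p}_0$ and the admissible test functions are in $L^\infty \cap W^{1,p}_0$), and verifying that the Sobolev/Poincaré inequalities one needs hold on $\Omega$ under the mere $p$-capacity uniform thickness condition (H1) rather than on a smooth domain — this is where the reference to~\cite{HKM1993} and~\cite{Mazya1985} in Section~\ref{sec:pre} is used. A subtlety worth flagging is that the estimate is stated with the $L^m$ norm of $\mu$ on the right-hand side, not $[|\mu|(\Omega)]^{1/(p-1)}$; since $\mu \in L^m(\Omega)$ with $m>1$ and $\Omega$ is bounded we do have $|\mu|(\Omega) \le C\|\mu\|_{L^m(\Omega)}$, so the two formulations are consistent, but the sharper exponent genuinely requires the truncation argument and cannot be obtained from Lemma~\ref{lem:nablau} alone. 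Alternatively, one could simply cite the Boccardo--Gallou\"et theorem directly, observing that every renormalized solution with $L^m$ data coincides with their SOLA/entropy solution by uniqueness, which shortens the proof to a single paragraph.
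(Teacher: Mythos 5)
The paper never actually proves this lemma: it is stated as a known Boccardo--Gallou\"et/B\'enilan-type result (the $m=1$ analogues being Lemma~\ref{lem:nablau} and Proposition~\ref{prop:nablauconverge}, quoted from \cite{Boccardo1992,Boccardo1996,Benilan1995,Maso1999}), so your reconstruction --- approximate $\mu$ in $L^m$, run the classical truncation/level-set iteration on the energy solutions $u_k$, then pass to the limit via Proposition~\ref{prop:nablauconverge} (or Lemma~\ref{lem:conver-grad-u}) and Fatou, or simply cite the classical theorem together with uniqueness of renormalized solutions for diffuse data --- is exactly the route the paper implicitly relies on, and it is sound in outline.

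One assertion in your write-up is wrong, though, and worth correcting because it touches the very regime this paper cares about: the constraint $m<m^{**}$ does \emph{not} guarantee $\frac{nm(p-1)}{n-m}>1$; it is equivalent to $\frac{nm(p-1)}{n-m}<p$, i.e.\ to the datum being below the duality/energy threshold. The exponent exceeds $1$ precisely when $m>\frac{n}{n(p-1)+1}$, and since here $p\le\frac{3n-2}{2n-1}\le 2-\frac1n$ one has $\frac{n}{n(p-1)+1}\ge 1$, so for part of the admissible range $m\in(1,m^{**})$ the exponent $\frac{nm(p-1)}{n-m}$ is at most $1$ (e.g.\ $n=2$, $p=1.2$, $m=1.1$ gives roughly $0.49$). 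This does not break the argument: the estimate must then be read for the generalized gradient of Definition~\ref{def:truncature} in the quasi-normed space $L^{\frac{nm(p-1)}{n-m}}(\Omega)$, and the Boccardo--Gallou\"et summation over the sets $\{j\le |u_k|<j+1\}$ (via H\"older against $\int_{\{j\le|u_k|<j+1\}}|\nabla u_k|^p\le C\|\mu_k\|_{L^m}\,\mathcal{L}^n(\{|u_k|>j\})^{1/m'}$ and the level-set decay) closes for $m>1$ whether or not the resulting exponent is above $1$; what one cannot claim in that sub-range is membership in $W^{1,\frac{nm(p-1)}{n-m}}_0(\Omega)$. So replace the justification ``$m<m^{**}$ makes the iteration a genuine gradient estimate'' by the correct bookkeeping: $m>1$ makes the series converge (strong, not merely Marcinkiewicz, bound), and $m<m^{**}$ only ensures you are in the measure-data (sub-energy) regime.
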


\begin{lemma}\label{lem:conver-grad-u}
Let $u$ be the renormalized solution to~\eqref{eq:main} with measure data $\mu \in L^m(\Omega)$ for some  $m \in (1,m^{**})$. Assume that a sequence $\{u_k\}_k$ is the renormalized solution to~\eqref{eq:main} with data $\mu_k \in L^{\frac{p}{m(p-1)}}(\Omega)$ such that $\mu_k$ converges to $\mu$ weakly in $L^m(\Omega)$. Then, there exists a subsequence $\{u_{k_j}\}_{j}$ such that $u_{k_j} \to u$ and $\nabla u_{k_j} \to \nabla u$ in $L^q(\Omega)$ for any $0< q < \frac{nm (p-1)}{n-m}$.
\end{lemma}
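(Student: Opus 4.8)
The plan is to combine the uniform a priori estimate of Lemma~\ref{lem:grad-u} with the known stability theory for renormalized solutions under $L^1$-weak convergence of the data, and then to pass from almost-everywhere convergence to strong $L^q$ convergence by a Vitali (equi-integrability) argument. First, since $\mu_k \to \mu$ weakly in $L^m(\Omega)$, the uniform boundedness principle gives $M:=\sup_k\|\mu_k\|_{L^m(\Omega)}<\infty$; as $\Omega$ is bounded we also have $\mu_k \to \mu$ weakly in $L^1(\Omega)$ with $\sup_k\|\mu_k\|_{L^1(\Omega)}<\infty$. Applying Lemma~\ref{lem:grad-u} to each $u_k$ (admissible since $m\in(1,m^{**})$ and $\mu_k\in L^m(\Omega)$) yields
\[
\sup_k\|\nabla u_k\|_{L^{\frac{nm(p-1)}{n-m}}(\Omega)} \le C\,M^{\frac{1}{p-1}} < \infty ,
\]
and the companion Boccardo--Gallou\"et estimate for $L^m$-data (see~\cite{Boccardo1992,Boccardo1996}) gives $\sup_k\|u_k\|_{L^\sigma(\Omega)}<\infty$ with $\sigma=\frac{nm(p-1)}{n-mp}$; note $mp<n$ because $p\le n$ and $m<m^{**}$, that $\sigma>\tau:=\frac{nm(p-1)}{n-m}$ because $p>1$, and that $m>1$ forces $\tau>\frac{(p-1)n}{n-1}$, so these bounds strictly improve those coming from Lemma~\ref{lem:nablau}.

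Next I would pass to a.e. limits and identify them. Since $\mu\in L^m(\Omega)$ is a function its singular part vanishes; together with $\mu_k\to\mu$ weakly in $L^1(\Omega)$, the stability theory for renormalized solutions (the Dal Maso--Murat--Orsina--Prignet stability theorem of~\cite{Maso1999}, which is precisely the mechanism behind Proposition~\ref{prop:nablauconverge} and requires only $L^1$-weak convergence of the data) furnishes a subsequence $\{u_{k_j}\}_j$ and a renormalized solution $w$ of~\eqref{eq:main} with datum $\mu$ such that $u_{k_j}\to w$ and $\nabla u_{k_j}\to\nabla w$ almost everywhere in $\Omega$. By uniqueness of the renormalized solution with datum $\mu$ we get $w=u$, hence $u_{k_j}\to u$ and $\nabla u_{k_j}\to\nabla u$ a.e. in $\Omega$.

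Finally I would upgrade to strong $L^q$ convergence. Fix $0<q<\tau=\frac{nm(p-1)}{n-m}$. By the first step, $\{|\nabla u_{k_j}-\nabla u|^q\}_j$ is bounded in $L^{\tau/q}(\Omega)$ with $\tau/q>1$, hence equi-integrable on the finite-measure set $\Omega$; since it also tends to $0$ a.e., Vitali's convergence theorem gives $\int_\Omega|\nabla u_{k_j}-\nabla u|^q\,dx\to 0$, i.e. $\nabla u_{k_j}\to\nabla u$ in $L^q(\Omega)$. The identical argument applied to $\{|u_{k_j}-u|^q\}_j$, now using the uniform $L^\sigma$ bound together with $\sigma>\tau>q$, yields $u_{k_j}\to u$ in $L^q(\Omega)$, which is the claimed conclusion for all $0<q<\frac{nm(p-1)}{n-m}$.

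The main obstacle is the almost-everywhere convergence of the full gradients in the second step. In the very singular regime $1<p\le\frac{3n-2}{2n-1}<2-\frac1n$ the solutions need not lie in $W^{1,1}_0(\Omega)$, so Rellich compactness is not directly available; one must instead work through the truncations $T_j(u_k)$, which are bounded in $W^{1,p}_0(\Omega)$ for each fixed $j$ (test the equation with $T_j(u_k)$ and use the coercivity implied by~\eqref{eq:A2}), extract an a.e. limit of $u_k$ by a diagonal procedure after controlling the level sets $\{|u_k|>j\}$ uniformly, and then promote this to a.e. convergence of the gradients via a Boccardo--Murat monotonicity argument based on~\eqref{eq:A2}. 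This is exactly the content of the renormalized-solution stability theory quoted above, which I would invoke rather than reproduce.
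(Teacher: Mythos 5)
Your argument is correct, and in fact the paper offers no proof of this lemma at all: it is stated as the $L^m$-analogue of Proposition~\ref{prop:nablauconverge} and justified only by the remark that for $m\equiv 1$ it reduces to the cited results of~\cite{Benilan1995,Boccardo1992,Maso1999}. Your route --- uniform gradient bounds from Lemma~\ref{lem:grad-u} applied to the $u_k$ (legitimate, since weak convergence in $L^m$ gives $\sup_k\|\mu_k\|_{L^m}<\infty$ by Banach--Steinhaus), the Dal Maso--Murat--Orsina--Prignet stability theorem under weak $L^1$ convergence of the data to extract a.e.\ convergence of $u_{k_j}$ and $\nabla u_{k_j}$, identification of the limit by uniqueness (valid here because $\mu\in L^m\subset L^1$ is diffuse), and a Vitali upgrade from a.e.\ convergence plus boundedness in $L^{\frac{nm(p-1)}{n-m}}$ --- is exactly the standard mechanism behind those citations, so it fills the paper's gap rather than diverging from it. Two points you handled that deserve emphasis: weak $L^m$ convergence does imply weak $L^1$ convergence on the bounded set $\Omega$, which is what the stability theorem needs; and the convergence of $u_{k_j}$ itself in $L^q$ up to the gradient exponent $\frac{nm(p-1)}{n-m}$ is not a consequence of Lemma~\ref{lem:grad-u} alone (Sobolev embedding is unavailable since that exponent may fall below $1$ in the very singular range), so your separate Boccardo--Gallou\"et type bound $\sup_k\|u_k\|_{L^{\sigma}}<\infty$ with $\sigma=\frac{nm(p-1)}{n-mp}>\frac{nm(p-1)}{n-m}$ is genuinely needed; a uniform Marcinkiewicz bound at that exponent would also suffice for the equi-integrability step. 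Your numerical checks ($mp<n$ from $m<m^{**}$ and $p<n$, and $\sigma>\tau$ from $p>1$) are correct, so I see no gap.
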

These obtained results remain valid in the case of measure data $\mu \in L^1(\Omega)$, i.e.~$m\equiv 1$ and they are exact the ones concluded in Lemma \ref{lem:nablau} and Proposition \ref{prop:nablauconverge}, respectively. We next derive local interior and boundary estimates in terms of given data $\mu$ in following lemmas, they are independent of the above ones.
\subsection{Interior gradient estimates}
Our attention has been first driven to the interior estimates. We first fix a point $x_0 \in \Omega$, for $0<2R \le r_0$ ($r_0$ is the constant given in \eqref{eq:capuni}). For each ball $B_R := B_R(x_0) \subset \Omega$ and $u \in W_{\mathrm{loc}}^{1,p}(\Omega)$, we consider $w \in W_{0}^{1,p}(B_R) +u$, unique solution to the homogeneous problem as follows
\begin{align} \label{eq:interior-w}
-\mathrm{div}(A(x,\nabla w)) = 0  \ \mbox{ in } \ B_R; \quad
w = u \  \mbox{ on } \ \partial B_{R}.
\end{align}
Here, for the convenience of the readers, a version of interior Gehring's lemma applied to solution $w$ of \eqref{eq:interior-w} is restated. This is also known as the ``reverse'' H\"older integral inequality with increasing supports (see \cite{Gehring}, \cite[Theorem 6.7]{Giu}). Technique of using this inequality with small exponents to gradient estimates was proposed by G. Mingione \textit{et al.} in \cite{Mi1} and along with it, many research approaches have since been developed.

\begin{lemma}\label{lem:rev-Holder}
Let $u \in W_{\mathrm{loc}}^{1,p}(\Omega)$ be a solution to equation~\eqref{eq:main} and $w \in W_{0}^{1,p}(B_R) +u$ be the unique solution to~\eqref{eq:interior-w}. There exists a constant $\Theta_0 = \Theta_0(n,p,\Lambda,c_0)>p$ such that
\begin{align}\label{ineq:rev-Holder}
\left(\fint_{B_{r}(y)} |\nabla w|^{\Theta_0}dx\right)^{\frac{1}{\Theta_0}} \le C \left(\fint_{B_{2r}(y)}|\nabla w|^{p-1}dx\right)^{\frac{1}{p-1}},
\end{align}
for all $B_{2r}(y) \subset B_{R}$ and $C=C(n, p, \Lambda)$.
\end{lemma}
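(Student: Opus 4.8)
\textbf{Proof proposal for Lemma~\ref{lem:rev-Holder}.} The plan is to derive the reverse Hölder inequality with increasing support from an energy estimate (Caccioppoli inequality) for solutions of the homogeneous equation~\eqref{eq:interior-w}, combined with the Sobolev--Poincaré inequality and the classical Gehring--Giaquinta--Modica self-improving lemma. First I would establish the Caccioppoli estimate: fix $B_{2r}(y) \subset B_R$, pick a cutoff $\varphi \in C_c^\infty(B_{2r}(y))$ with $\varphi \equiv 1$ on $B_r(y)$ and $|\nabla \varphi| \lesssim 1/r$, and test the weak formulation of~\eqref{eq:interior-w} with $\psi = \varphi^p (w - \bar w)$, where $\bar w$ is the average of $w$ over $B_{2r}(y)$ (or a suitable truncation constant). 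Using the growth bound~\eqref{eq:A1}, the monotonicity/ellipticity~\eqref{eq:A2} in the form $\langle A(x,\zeta),\zeta\rangle \gtrsim |\zeta|^p$ (which follows from (A2) with $\eta = 0$ together with (A1)), and Young's inequality to absorb the cross term, one obtains
\begin{align}\notag
\fint_{B_r(y)} |\nabla w|^p \, dx \le \frac{C}{r^p} \fint_{B_{2r}(y)} |w - \bar w|^p \, dx.
\end{align}

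Next I would invoke the Sobolev--Poincaré inequality: there is an exponent $\vartheta = \max\{1, np/(n+p)\} < p$ such that
\begin{align}\notag
\left(\fint_{B_{2r}(y)} |w - \bar w|^p \, dx\right)^{\frac{1}{p}} \le C r \left(\fint_{B_{2r}(y)} |\nabla w|^{\vartheta} \, dx\right)^{\frac{1}{\vartheta}}.
\end{align}
Combining with the Caccioppoli estimate gives the reverse Hölder inequality with a gap,
\begin{align}\notag
\left(\fint_{B_r(y)} |\nabla w|^p \, dx\right)^{\frac{1}{p}} \le C \left(\fint_{B_{2r}(y)} |\nabla w|^{\vartheta} \, dx\right)^{\frac{1}{\vartheta}},
\end{align}
valid for every ball with $B_{2r}(y) \subset B_R$. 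Since $\vartheta < p$, the Gehring-type self-improvement lemma (see~\cite{Gehring}, \cite[Theorem~6.7]{Giu}) applies: there exists $\delta = \delta(n,p,\Lambda) > 0$ such that $|\nabla w| \in L^{p+\delta}_{\mathrm{loc}}$ with the quantitative bound
\begin{align}\notag
\left(\fint_{B_r(y)} |\nabla w|^{p+\delta} \, dx\right)^{\frac{1}{p+\delta}} \le C \left(\fint_{B_{2r}(y)} |\nabla w|^{\vartheta} \, dx\right)^{\frac{1}{\vartheta}} \le C \left(\fint_{B_{2r}(y)} |\nabla w|^{p-1} \, dx\right)^{\frac{1}{p-1}},
\end{align}
where in the last step I used $\vartheta \le p - 1$ when $p$ is in the very singular range (indeed $p \le \frac{3n-2}{2n-1}$ forces $p$ close to $1$, so $p-1$ is small, but one must check $np/(n+p) \le p-1$; if this fails one instead interpolates $L^\vartheta$ between $L^{p-1}$ and $L^{p+\delta}$ and absorbs, using that $p - 1 < \vartheta < p < p + \delta$ cannot all hold — so more carefully, one applies Gehring starting from the exponent $p-1$ directly after rewriting Sobolev--Poincaré with exponent $p-1$ on the right). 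Setting $\Theta_0 := p + \delta > p$ yields~\eqref{ineq:rev-Holder}. One then notes $\Theta_0$ depends only on $n,p,\Lambda$ and, through the Gehring constant on the capacity-density structure, on $c_0$.

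The main obstacle I anticipate is the bookkeeping of exponents in the very singular regime: because $1 < p \le \frac{3n-2}{2n-1}$, the natural Sobolev--Poincaré exponent $np/(n+p)$ and the target exponent $p-1$ are both subtly placed relative to $p$, and one must arrange the reverse Hölder inequality so that its right-hand exponent is $\le p - 1$ before quoting Gehring (whose statement requires the lower exponent to be strictly below the upper one). The cleanest route is to prove the Caccioppoli/Sobolev--Poincaré chain with right-hand exponent exactly $p-1$ — which is legitimate since $p - 1 < 1 < p$ makes $|w-\bar w|^{p-1}$ fully integrable and the Poincaré inequality with exponent $p-1 \ge np/(n+p)$ (valid for $p$ in this range after checking $p-1 \ge np/(n+p) \iff (p-1)(n+p) \ge np \iff p^2 - p - n \ge 0$, which may in fact \emph{fail}) — so if that inequality fails, one must instead use the version of Gehring's lemma that allows a right-hand side in $L^{\vartheta}$ with $\vartheta < p$ and then post-process by the elementary interpolation $\|\nabla w\|_{L^{\vartheta}} \le \|\nabla w\|_{L^{p-1}}^{1-\theta}\|\nabla w\|_{L^{\Theta_0}}^{\theta}$ on $B_{2r}$ followed by a standard absorption/iteration over slightly shrinking balls. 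This interpolation-and-absorption step, while routine, is where care is needed; everything else is classical.
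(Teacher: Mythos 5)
Your route --- Caccioppoli with test function $\varphi^p(w-\bar w)$, Sobolev--Poincar\'e, Gehring self-improvement, and then lowering the right-hand exponent to $p-1$ --- is exactly the classical argument the paper itself relies on: the paper gives no proof of this lemma, restating it as a known ``reverse H\"older inequality with increasing supports'' and citing \cite{Gehring}, \cite[Theorem 6.7]{Giu} and the small-exponent technique of \cite{Mi1}, so your sketch is a faithful reconstruction of that cited route. One correction to your hedged middle step, though: in the range $1<p\le\frac{3n-2}{2n-1}$ one always has $p-1<1\le\vartheta=\max\{1,\,np/(n+p)\}$, so the shortcut ``rewrite Sobolev--Poincar\'e with exponent $p-1$ on the right and apply Gehring directly'' is never available (a Poincar\'e inequality with sub-unit exponent fails for general Sobolev functions), and your parenthetical claim that $p-1<\vartheta<p<p+\delta$ ``cannot all hold'' is false (take $\vartheta=1$, $p=\tfrac43$). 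Consequently the interpolation $\|\nabla w\|_{L^{\vartheta}}\le\|\nabla w\|_{L^{p-1}}^{1-\theta}\|\nabla w\|_{L^{p+\delta}}^{\theta}$ on intermediate balls combined with the standard absorption/iteration lemma over shrinking radii (see \cite{Giu}) is not an optional fallback but the required final step --- this is precisely the exponent-lowering device of \cite{Mi1} --- and with it your argument is complete; note also that for this interior statement $\Theta_0$ needs no dependence on $c_0$, which is kept in the paper only so that a single exponent serves both the interior and the boundary versions.
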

We next perform a comparison gradient result for solutions to both problems \eqref{eq:main} and \eqref{eq:interior-w}. The conclusion of following result is a modified version of a result has been proved in \cite{55QH4} when $\mu \in L^1(\Omega)$ and $\frac{3n-2}{2n-1}<p \le 2-\frac{1}{n}$. Our approach is based on methods given in~\cite{Benilan1995, 55QH4}, where the utilization of H\"older's and Sobolev's inequalities are very important to conclude some estimates. However, the method cannot be directly applied itself to obtain comparison results for a general measure datum $\mu$ (as a function in $L^1$) and the range of $p$; i.e. $p<\frac{3n-2}{2n-1}$. In our discussion, one comes to expect the more appropriate method or the requirement that one or more assumptions on initial data. And in this study, to overcome the difficulty of the very singular $p$, authors impose a stronger assumption on datum $\mu$, to be a function belonging to $L^m(\Omega)$, with $m>1$ will be specified later in the proof. It is worth emphasizing that, the statement and proof of comparison estimates for the case $1<p \le \frac{3n-2}{2n-1}$ are not new. Here, for the convenience of reader, we restate and only present the sketch of proof. 

Let us state that important result via following lemma and here, it is noticeable that we derive the local estimates in the ball $B_R$.

\begin{lemma}\label{lem:int-compare}
Let $1  < p \le \frac{3n-2}{2n-1}$ and $\mu \in L^{m}(B_R)$ for some $m \in (m^*, n)$. Assume that $u \in W_{\mathrm{loc}}^{1,p}(\Omega)$ be a solution to equation~\eqref{eq:main}  and $w \in W_{0}^{1,p}(B_R) +u$ be the unique solution to equation~\eqref{eq:interior-w}. For any $q$ satisfying the following condition 
\begin{align}\label{cond:range-q}
  \frac{n}{2n-1} < q < \frac{nm(p-1)}{n-m},
\end{align}
there exists a constant $C>0$ only depending on $n$, $p$, $q$ and $m$ such that
\begin{align}\label{ineq:int-compare}
\left(\fint_{B_R} |\nabla u - \nabla w|^{q}dx\right)^{\frac{1}{q}} \le C \mathcal{F}_R(\mu)^{\frac{1}{p-1}} + C \mathcal{F}_R(\mu) \left(\fint_{B_R}|\nabla u|^{q}dx\right)^{\frac{2-p}{q}},
\end{align} 
where the function $\mathcal{F}_R$ is defined by
\begin{align}\label{eq:def-F}
\mathcal{F}_R(\mu) = \left(R^{m}\fint_{B_R}|\mu|^{m}dx\right)^{\frac{1}{m}}.
\end{align}
\end{lemma}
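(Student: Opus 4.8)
The plan is to test the difference $u - w$ against a suitable power of $|u-w|$, exploit the monotonicity assumption \eqref{eq:A2}, and then absorb the degenerate weight $(|\nabla u| + |\nabla w|)^{p-2}$ by a careful H\"older argument, following the scheme of \cite{Benilan1995, 55QH4}. Since $u$ and $w$ solve \eqref{eq:main} and \eqref{eq:interior-w} respectively, subtracting the weak formulations gives $\int_{B_R} \langle A(x,\nabla u) - A(x,\nabla w), \nabla \varphi \rangle\, dx = \int_{B_R} \varphi\, d\mu$ for admissible $\varphi$. Because we are in the very singular regime $p \le \frac{3n-2}{2n-1}$, we cannot test directly with $u - w$ (which need not have enough integrability); instead I would use a truncated/regularized test function of the form $\varphi = T_\delta\big((u-w)\big)$ or, in the spirit of \cite{Benilan1995}, $\varphi = \big[(|u-w|+\sigma)^{\theta-1} - \sigma^{\theta-1}\big]\,\mathrm{sgn}(u-w)$ for a small exponent $\theta \in (0,1)$ chosen in terms of $q$, $m$, $n$, $p$, and then pass to the limit. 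This yields, via \eqref{eq:A2}, a lower bound of the form $\int_{B_R} (|\nabla u|+|\nabla w|)^{p-2} |\nabla u - \nabla w|^2 \,(\text{weight in }|u-w|)\, dx$ controlled by $\int_{B_R} |u-w|^{\theta-1} d|\mu|$ (roughly speaking), which after a Sobolev embedding on $B_R$ for $u - w \in W^{1,q}_0(B_R)$ and Young's inequality is estimated by the right-hand side of \eqref{ineq:int-compare}.

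Concretely, the key steps in order: (1) record the difference equation and justify the choice of test function, checking it lies in $L^\infty \cap W^{1,p}_0(B_R)$ after truncation, and handle the passage to the limit removing the regularization $\sigma, \delta$; (2) apply \eqref{eq:A2} to get the degenerate energy quantity on the left and the measure term $\int |u-w|^{\theta-1}\,d|\mu|$ on the right; (3) use the elementary inequality $|\nabla u - \nabla w|^q \le C\big[(|\nabla u|+|\nabla w|)^{p-2}|\nabla u - \nabla w|^2\big]^{q/2} (|\nabla u|+|\nabla w|)^{q(2-p)/2}$ together with H\"older's inequality with exponents $2/q$ and $2/(2-q)$ to split off the degenerate energy from a factor involving $(|\nabla u| + |\nabla w|)^{q(2-p)/(2-q)}$; (4) bound the latter by $\fint_{B_R}(|\nabla u|+|\nabla w|)^q\,dx$ raised to the appropriate power using that $q(2-p)/(2-q) \le q$ for $q$ in the range \eqref{cond:range-q} (this is where $q < \frac{nm(p-1)}{n-m}$ and $q > \frac{n}{2n-1}$ enter, ensuring all exponents are admissible and that $\nabla w$ is controlled by $\nabla u$ via the reverse H\"older Lemma~\ref{lem:rev-Holder} and comparison of energies $\fint_{B_R}|\nabla w|^{p-1} \le C \fint_{B_R}|\nabla u|^{p-1}$); (5) estimate $\int_{B_R} |u - w|^{\theta - 1} d|\mu|$ by H\"older in $L^m$ and $L^{m'}$, then use the Sobolev–Poincar\'e inequality $\|u-w\|_{L^{(\theta-1)m'}(B_R)} \le C R^{\,\cdot}\, \|\nabla u - \nabla w\|_{L^q(B_R)}$, which forces the precise relation among $\theta$, $q$, $m$, $n$; (6) collect the powers of $R$ to produce $\mathcal{F}_R(\mu)$ as in \eqref{eq:def-F}, apply Young's inequality to absorb the small power of $\|\nabla u - \nabla w\|_{L^q}$ on the left, and re-normalize to integral averages to obtain \eqref{ineq:int-compare}.

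The main obstacle will be step (1) together with the bookkeeping of exponents in steps (3)–(5): one must choose the auxiliary exponent $\theta$ (equivalently the power in the test function) so that simultaneously the test function is admissible, the Sobolev embedding applies with target exponent matching $(\theta-1)m'$, and the resulting power of $\|\nabla u - \nabla w\|_{L^q(B_R)}$ on the right-hand side is strictly less than $1$ (so it can be absorbed) — this is exactly where the constraint $p \le \frac{3n-2}{2n-1}$ combined with $m > m^*$ is used, since $m^* = \frac{n}{2(p-1)n + 2 - p}$ is precisely the threshold making these exponents compatible. A secondary technical point is that $q$ may be less than $1$, so $L^q$ is only a quasi-norm; one handles this by working with $(|\nabla u| + |\nabla w|)$ raised to powers that are genuinely integrable (e.g. $p-1$ and the Marcinkiewicz estimate of Lemma~\ref{lem:nablau}) and invoking Lemma~\ref{lem:grad-u} to guarantee $\nabla u \in L^{nm(p-1)/(n-m)}$, which by \eqref{cond:range-q} strictly contains the exponents needed. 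I would also need the gradient comparison between $w$ and $u$ in the $L^{p-1}$ scale (a standard consequence of testing the homogeneous equation for $w$ against $w - u$ and using \eqref{eq:A1}–\eqref{eq:A2}), which lets every occurrence of $\nabla w$ on the right be replaced by $\nabla u$; since the excerpt states this comparison machinery is "not new," I would cite \cite{Benilan1995, 55QH4} for the details and only indicate the modifications forced by $\mu \in L^m$ rather than $\mu \in L^1$.
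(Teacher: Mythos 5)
Your overall strategy --- test the difference equation with a B\'enilan-type power of $v=u-w$, use the monotonicity \eqref{eq:A2} to produce the degenerate quantity $\mathcal{G}(u,w)=(|\nabla u|^2+|\nabla w|^2)^{\frac{p-2}{2}}|\nabla u-\nabla w|^2$, split $|\nabla v|^q$ by H\"older, and close with Sobolev and Young --- is the paper's scheme. But the two steps that carry the actual content of the lemma do not close as written. First, your test function has the exponent with the wrong sign: with $\theta\in(0,1)$, the choice $\varphi=\big[(|v|+\sigma)^{\theta-1}-\sigma^{\theta-1}\big]\mathrm{sgn}(v)$ puts the \emph{negative} power $|v|^{\theta-1}$ on the measure side, and your step (5) then needs an integral of a negative power of $|v|$ (your ``$\|u-w\|_{L^{(\theta-1)m'}}$'') to be bounded by $R^{\,\cdot}\|\nabla v\|_{L^q(B_R)}$. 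No such Sobolev--Poincar\'e inequality can hold: replacing $v$ by $tv$ and letting $t\to0^+$ sends the left-hand side to infinity while the right-hand side tends to zero. The workable choice is the opposite one, $\varphi=T_k^{\varepsilon}(|v|^{\alpha-1}v)$ with $\alpha>0$, i.e.\ a truncated \emph{positive} power $|v|^{\alpha}\mathrm{sgn}(v)$: then the measure term is $\int_{B_R}|v|^{\alpha}\,|\mu|\,dx\le\|\mu\|_{L^m}\|v\|^{\alpha}_{L^{\alpha m'}}$, which Sobolev does control, while the helpful negative-power weight $|v|^{\alpha-1}$ lands on the energy side; this is exactly the paper's estimate \eqref{est:Im-1k}.

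Second, your step (3) H\"older split with exponents $2/q$ and $2/(2-q)$ separates off the \emph{unweighted} energy $\int_{B_R}\mathcal{G}\,dx$, which is not finite or controlled for data merely in $L^m$ with $m$ in the stated range --- that is the whole reason the weighted test functions are needed. The weight $|v|^{\alpha-1}$ must be carried through the split (a three-factor H\"older), which produces an extra factor $\|v\|_{L^\beta}^{\alpha}$ and hence a small power of $\|\nabla v\|_{L^q}$ on the right that has to be absorbed; the paper organizes this loop via the preliminary inequality \eqref{est:Im-4}, $\big(\int_{B_R}|\nabla v|^q\big)^{1/q}\le C\big(\int_{B_R}|\nabla v||v|^{-s/q}\big)^{q/(q-s)}$, combined with the pointwise bound $|\nabla v|\le C\big(\mathcal{G}^{1/p}+\mathcal{G}^{1/2}|\nabla u|^{\frac{2-p}{2}}\big)$, which also keeps only $|\nabla u|$ on the right so that your reverse-H\"older replacement of $\nabla w$ by $\nabla u$ in step (4) is unnecessary. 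Accordingly, the restriction \eqref{cond:range-q} arises from the compatibility of the parameters $s,\alpha,\beta,\gamma,\eta$ in that bookkeeping, not from the condition $q(2-p)/(2-q)\le q$ you invoke. With the sign of the test-function exponent corrected and the $|v|$-weight retained in the H\"older split, your outline becomes the paper's proof; as it stands, steps (3)--(5) contain a genuine gap.
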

\begin{proof} The proof of this lemma can be divided into three steps. In the first step, we apply the H{\"o}lder and Sobolev inequality to obtain the following estimate
\begin{align}\label{est:Im-4}
\left(\int_{B_R} |\nabla v|^{q}dx\right)^{\frac{1}{q}} \le C \left( \int_{B_R}|\nabla v| |v|^{-\frac{s}{q}}dx\right)^{\frac{q}{q-s}},
\end{align}
where $v := u - w$, and $0<s<q<1$ satisfying some conditions such that the Sobolev inequality is valid. For $1<p<2$, let us consider the function $\mathcal{G}(u,w) := (|\nabla u|^2 + |\nabla w|^2)^{\frac{p-2}{2}}|\nabla u - \nabla w|^2$. Then we apply the following fundamental inequality
\begin{align*}
|\nabla v|  \le C \left(\mathcal{G}(u,w)^{\frac{1}{p}} + \mathcal{G}(u,w)^{\frac{1}{2}} |\nabla u|^{\frac{2-p}{2}}\right),
\end{align*}
on the right-hand side of~\eqref{est:Im-4} to arrive that
\begin{align}\label{est:Im-4f}
\left(\int_{B_R} |\nabla v|^{q}dx\right)^{\frac{1}{q}} \le C \left[\int_{B_R}\left(|v|^{-\frac{s}{q}} \mathcal{G}(u,w)^{\frac{1}{p}} + |v|^{-\frac{s}{q}} \mathcal{G}(u,w)^{\frac{1}{2}} |\nabla u|^{\frac{2-p}{2}}\right)dx\right]^{\frac{q}{q-s}}.
\end{align}

In the second step we need to estimate the terms in the form $\int_{B_R} |v|^{-s_1} \mathcal{G}(u,w)^{s_2} dx$ appeared on the right-hand side of~\eqref{est:Im-4f}. To do this, we will apply the variational form of equations~\eqref{eq:main} and~\eqref{eq:interior-w} with a consistent test function. The key idea is to choose the test function by $\varphi = T_{k}^{\varepsilon}(|v|^{\alpha-1}v) \in W_0^{1,p}(B_R)$ for $\varepsilon, \, \alpha>0$, where 
\begin{align*}
T_{k}^{\varepsilon}(t) = (k+\varepsilon) \, \mathrm{sign}(t) \max\left\{0,\min\left\{1,\frac{|t|-\varepsilon}{k}\right\}\right\}, \quad k>0.
\end{align*}
Applying techniques related to the method in~\cite[Lemma 4.2]{Benilan1995}, or~\cite[Section 5]{Maso1999} or~\cite{55QH4}, we may prove that
\begin{align}\label{est:Im-1k}
\left(\int_{B_R}  |v|^{\eta(\alpha-1)} \mathcal{G}(u,w)^{\eta} dx\right)^{\frac{1}{\eta}} & \le C R^{\frac{n}{\eta}-\frac{n(\alpha+\beta-\gamma)}{\beta}}  \|\mu\|_{L^{m}(B_R)} \|v\|^{\alpha}_{L^{\beta}(B_R)},
\end{align}
where $\mathcal{F}_R(\mu)$ defined as in~\eqref{eq:def-F}, $\gamma \in (0,\alpha)$, $\beta = \frac{m\gamma}{m-1}$ and $0 < \eta < \frac{\beta}{\alpha+\beta-\gamma}$. Thanks to Sobolev's inequality, we imply from~\eqref{est:Im-1k} that
\begin{align}\label{est:Im-1}
\left(\int_{B_R}  |v|^{\eta(\alpha-1)} \mathcal{G}(u,w)^{\eta} dx\right)^{\frac{1}{\eta}} & \le C R^{n\left(\frac{1}{\eta}-\frac{\alpha}{\beta}\right)-1} \mathcal{F}_R(\mu) \left( \int_{B_R}|\nabla v| |v|^{-\frac{s}{q}}dx\right)^{\frac{q\alpha}{q-s}},
\end{align} 

In the last step we apply~\eqref{est:Im-1} and Young inequality with suitable value of parameters $s, \alpha, \beta, \gamma, \eta$ to obtain~\eqref{ineq:int-compare}. We remark that the assumption~\eqref{cond:range-q} comes from all sufficient conditions of appeared parameters here.
\end{proof}

The following result holds.

\begin{lemma}\label{lem:res3.4P}
Assume that $w$ is the solution to~\eqref{eq:interior-w}. There exist two constants $\beta_0 = \beta_0(n,p,\Lambda) \in (0,1/2]$ and $C_1 = C_1(n,p,\Lambda)>0$ such that 
\begin{align}\label{eq:nablaw_estimate}
\left(\fint_{B_\rho(y)}{|\nabla w|^p dx} \right)^{\frac{1}{p}} \le C_1 \left(\frac{\rho}{r} \right)^{\beta_0-1}\left(\fint_{B_r(y)}{|\nabla w|^p dx} \right)^{\frac{1}{p}},
\end{align}
for any $B_\rho(y) \subset B_r(y) \subset B_{R}$. Moreover, for any $s \in (0,p]$ one can find a constant $C_2 = C_2(n,p,\Lambda,s)$ such that
\begin{align}\label{eq:nablaw_estimate-2}
\left( \fint_{B_\rho(y)}{|\nabla w|^s dx} \right)^{\frac{1}{s}} \le C_2 \left(\frac{\rho}{r} \right)^{\beta_0-1} \left(\fint_{B_r(y)}{|\nabla w|^s dx} \right)^{\frac{1}{s}},
\end{align}
for any $B_\rho(y) \subset B_r(y) \subset B_{R}$.
\end{lemma}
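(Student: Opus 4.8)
The plan is to deduce both inequalities from the higher-integrability/reverse-Hölder estimate in Lemma~\ref{lem:rev-Holder} together with standard energy (Caccioppoli-type) and self-improving decay estimates for solutions of the homogeneous equation~\eqref{eq:interior-w}. First I would recall that, under (A1)--(A2), the solution $w$ of~\eqref{eq:interior-w} enjoys an interior Lipschitz-type oscillation decay of its gradient: there is $\beta_0 = \beta_0(n,p,\Lambda) \in (0,1/2]$ such that
\begin{align*}
\fint_{B_\rho(y)} |\nabla w - (\nabla w)_{B_\rho(y)}|^p\,dx \le C \left(\frac{\rho}{r}\right)^{p\beta_0} \fint_{B_r(y)} |\nabla w - (\nabla w)_{B_r(y)}|^p\,dx,
\end{align*}
a consequence of the De Giorgi--Nash--Moser / $C^{1,\alpha}$ theory for homogeneous $p$-Laplacian type equations. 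The exponent shift $\beta_0 - 1$ appearing in~\eqref{eq:nablaw_estimate} signals that the intended bound is in fact the elementary volume-adjusted form: from $\int_{B_\rho}|\nabla w|^p \le \int_{B_r}|\nabla w|^p$ (monotonicity of the integral over nested balls, valid because $w$ solves the same homogeneous equation on all of $B_R$) one gets
\begin{align*}
\fint_{B_\rho(y)} |\nabla w|^p\,dx \le \frac{|B_r|}{|B_\rho|}\,\fint_{B_r(y)} |\nabla w|^p\,dx = \left(\frac{\rho}{r}\right)^{-n} \fint_{B_r(y)} |\nabla w|^p\,dx.
\end{align*}
So the real content is to improve the crude exponent $-n$ to $p(\beta_0 - 1)$, i.e.\ to show the power of $\rho/r$ can be taken as close to $p\beta_0 - p$; this is precisely what the gradient oscillation decay above buys, after a telescoping/iteration over dyadic annuli and absorbing the mean terms.

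For~\eqref{eq:nablaw_estimate}, the key steps in order are: (i) invoke the homogeneous-equation gradient regularity to get the oscillation decay with exponent $p\beta_0$; (ii) combine with a reverse-Hölder/Gehring inequality (Lemma~\ref{lem:rev-Holder}) to pass from the $L^{p-1}$ average to the $L^p$ (indeed $L^{\Theta_0}$) average on comparable balls; (iii) run a standard iteration lemma (e.g.\ the Campanato-type lemma on quantities $\phi(\rho) = \fint_{B_\rho}|\nabla w|^p$) to conclude the stated scaling with the $(\beta_0-1)$ exponent, with $C_1$ depending only on $n,p,\Lambda$. For~\eqref{eq:nablaw_estimate-2}, I would bootstrap down from $p$ to any $s \in (0,p]$: when $s \le p$ one direction is immediate by Hölder,
\begin{align*}
\left(\fint_{B_\rho(y)} |\nabla w|^s\,dx\right)^{\frac 1s} \le \left(\fint_{B_\rho(y)} |\nabla w|^p\,dx\right)^{\frac 1p},
\end{align*}
and for the reverse passage on $B_r(y)$ one again uses Lemma~\ref{lem:rev-Holder} (self-improving integrability: the $L^p$ average of $\nabla w$ over $B_r$ is controlled by its $L^{p-1}$, hence a fortiori by its $L^s$ average for $s \le p-1$; for $p-1 < s \le p$ interpolate or apply Gehring again) so that all $L^s$ averages on comparable balls are mutually equivalent. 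Chaining these equivalences with~\eqref{eq:nablaw_estimate} yields~\eqref{eq:nablaw_estimate-2} with $C_2 = C_2(n,p,\Lambda,s)$.

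I expect the main obstacle to be bookkeeping rather than conceptual: carefully tracking that the constant in the iteration lemma stays independent of $\rho/r$ (only the exponent degrades to $\beta_0 - 1$, not worse), and handling the low-exponent regime $s < p-1$ where the reverse-Hölder inequality of Lemma~\ref{lem:rev-Holder} must be applied in the form ``$L^{p-1}$ average $\lesssim$ $L^{s}$ average'' — this requires that $\nabla w$ itself (not just a comparison function) satisfies a reverse-Hölder inequality with the \emph{smaller} exponent on the right, which does hold for solutions of the homogeneous equation by the same Gehring argument but should be stated explicitly. Everything else (Caccioppoli, Hölder on nested balls, the Campanato iteration lemma) is routine, so I would keep those computations brief and cite~\cite{Giu} for the iteration lemma.
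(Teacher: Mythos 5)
There is a genuine gap in your treatment of the first inequality \eqref{eq:nablaw_estimate}. Your step (i) invokes a gradient oscillation (excess) decay
\begin{align*}
\fint_{B_\rho(y)} |\nabla w - (\nabla w)_{B_\rho(y)}|^p\,dx \le C \left(\frac{\rho}{r}\right)^{p\beta_0} \fint_{B_r(y)} |\nabla w - (\nabla w)_{B_r(y)}|^p\,dx,
\end{align*}
i.e.\ a Campanato-type $C^{1,\beta_0}$ estimate for $w$. This is not available under the paper's hypotheses: the operator $A(x,\xi)$ is only a Carath\'eodory map satisfying \eqref{eq:A1}--\eqref{eq:A2}, with no continuity (Dini, VMO, small-BMO, \dots) assumption in the $x$-variable, and for merely measurable $x$-dependence gradient H\"older continuity fails in general — Meyers-type examples show that $\nabla w$ need not even be locally bounded, the only integrability gain being the Gehring exponent $\Theta_0$ of Lemma~\ref{lem:rev-Holder}. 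The inequality \eqref{eq:nablaw_estimate} is in fact much weaker than $C^{1,\beta_0}$: it is equivalent to the Morrey-type energy decay $\int_{B_\rho}|\nabla w|^p\,dx \le C(\rho/r)^{n-p+p\beta_0}\int_{B_r}|\nabla w|^p\,dx$, and the paper obtains it from the De Giorgi--Nash--Moser interior $C^{0,\beta_0}$ H\"older continuity of the solution $w$ itself (combined with Caccioppoli's inequality), simply citing \cite[Theorem 7.7]{Giu}; $\beta_0=\beta_0(n,p,\Lambda)\in(0,1/2]$ is the De Giorgi exponent. So the correct ingredient is H\"older continuity of $w$, not of $\nabla w$, and your dyadic iteration should be run on $\phi(\rho)=\int_{B_\rho}|\nabla w|^p$ using that input; as stated, your main ingredient is unavailable.

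Your treatment of the second inequality \eqref{eq:nablaw_estimate-2} is essentially the paper's: combine \eqref{eq:nablaw_estimate} with the reverse H\"older inequality of Lemma~\ref{lem:rev-Holder} (H\"older up from $s$ to $p$ on the small ball, reverse H\"older back down on the large ball, the case $\rho\simeq r$ being trivial), and you correctly flag the only delicate points — the doubling of the ball in Lemma~\ref{lem:rev-Holder} and the regime $s<p-1$, where one needs the standard self-improving property that the exponent on the right of a reverse H\"older inequality can be lowered to any positive value (via interpolation and an absorption/iteration lemma as in \cite{Giu}). That part is fine; only the justification of \eqref{eq:nablaw_estimate} needs to be replaced as indicated.
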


\begin{proof}
The inequality~\eqref{eq:nablaw_estimate} comes from the standard interior H\"older continuity of solutions, its proof can be found in~\cite[Theorem 7.7]{Giu} and we do not write down all the details here. Applying  the inequality~\eqref{eq:nablaw_estimate} and Lemma~\ref{lem:rev-Holder}, one obtains~\eqref{eq:nablaw_estimate-2}.
\end{proof}

In order to prove the key lemma below, one refers to Lemma~\ref{lem:hanlin} in~\cite[Lemma 1.4]{Han-Lin} as follows.
\begin{lemma}\label{lem:hanlin}
Let $\eta \in (0,1)$, $0\le \beta<\alpha$ and $h: [0,R] \to [0,\infty)$ be a non-decreasing function. Suppose that 
$$h(\rho) \le P\left[\left(\frac{\rho}{r} \right)^\alpha + \varepsilon \right] h(r) + Q r^\beta,$$
for any $0<\rho \le \eta r <R$ and $\varepsilon>0$, with positive constants  $P$, $Q$. Then, there exists a constant $\varepsilon_0=\varepsilon_0(P,\alpha,\beta,\eta)$ such that if $\varepsilon \in (0,\varepsilon_0)$ then
$$h(\rho) \le C \left[\left(\frac{\rho}{r} \right)^\alpha h(r) + Q\rho^\beta \right],$$
for all $0<\rho \le r \le R$, where $C$ depends on $P,\alpha$ and $\beta$. 
\end{lemma}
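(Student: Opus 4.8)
The plan is to prove this by iterating the hypothesis along a geometric sequence of radii, exactly as in the classical Campanato-type iteration lemma (cf.~\cite[Lemma 1.4]{Han-Lin} or \cite{Giu}). Fix $\tau \in (0,\eta)$, to be chosen, and consider the radii $r_k = \tau^k r$ for $k \ge 0$. Applying the hypothesis with $\rho = r_{k+1} = \tau r_k$ (note $\tau r_k \le \eta r_k < R$ since $\tau < \eta$ and $r_k \le r$), we get
\begin{align*}
h(r_{k+1}) \le P\left(\tau^\alpha + \varepsilon\right) h(r_k) + Q r_k^\beta.
\end{align*}
First I would choose $\tau$ so that $P\tau^\alpha \le \tfrac14 \tau^{\beta}$, which is possible because $\beta < \alpha$ forces $\tau^\alpha \le \tau^{\beta}$ for $\tau \le 1$, and then shrinking $\tau$ makes $P\tau^{\alpha-\beta} \le \tfrac14$; note this choice depends only on $P,\alpha,\beta$. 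Having fixed $\tau$, I then pick $\varepsilon_0 = \varepsilon_0(P,\alpha,\beta,\eta)$ so small that $P\varepsilon_0 \le \tfrac14 \tau^{\beta}$ as well. Then for $\varepsilon \in (0,\varepsilon_0)$ we have $P(\tau^\alpha + \varepsilon) \le \tfrac12 \tau^\beta$, hence
\begin{align*}
h(r_{k+1}) \le \tfrac12 \tau^{\beta} h(r_k) + Q r_k^\beta = \tfrac12\tau^{\beta} h(r_k) + Q \tau^{k\beta} r^\beta.
\end{align*}

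The next step is to solve this linear recursion. Dividing by $r_{k+1}^\beta = \tau^{(k+1)\beta} r^\beta$ and setting $y_k = h(r_k)/r_k^\beta$, the inequality becomes $y_{k+1} \le \tfrac12 y_k + Q \tau^{-\beta}$, which iterates to $y_k \le 2^{-k} y_0 + 2 Q \tau^{-\beta} \le y_0 + 2Q\tau^{-\beta}$ for all $k$. Translating back, $h(r_k) \le \tau^{k\beta} r^\beta\left(h(r)/r^\beta + 2Q\tau^{-\beta}\right) = \tau^{k\beta}\left(h(r) + 2Q\tau^{-\beta} r^\beta\right)$ for every $k \ge 0$.

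Finally I would fill the gaps between consecutive $r_k$ using monotonicity of $h$. Given any $\rho \in (0,r]$, choose $k$ with $r_{k+1} < \rho \le r_k$; then since $h$ is non-decreasing,
\begin{align*}
h(\rho) \le h(r_k) \le \tau^{k\beta}\left(h(r) + 2Q\tau^{-\beta} r^\beta\right) = \tau^{-\beta}\left(\tau^{k\beta+\beta}\right)\left(h(r) + 2Q\tau^{-\beta} r^\beta\right).
\end{align*}
Since $\rho > r_{k+1} = \tau^{k+1} r$, we have $\tau^{(k+1)\beta} < (\rho/r)^{\beta}$; however to recover the exponent $\alpha$ rather than $\beta$ I would instead run the last comparison on the $h(r)$ term more carefully. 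The clean route is: from $h(r_k) \le \tau^{k\beta} h(r) + C(\tau) Q r_k^{\beta}$ and the observation that when $\beta < \alpha$ one actually has $2^{-k} \le \tau^{k(\alpha-\beta)}\cdot(\text{const})$ is false in general — so instead I use that the first term $2^{-k}y_0$ decays like any geometric rate: choosing $\tau$ additionally small so that $\tfrac12 \le \tau^{\alpha-\beta}$, i.e. $\tau \le 2^{-1/(\alpha-\beta)}$, gives $2^{-k} y_0 \le \tau^{k(\alpha-\beta)} y_0$, whence $h(r_k) \le \tau^{k(\alpha-\beta)}\tau^{k\beta} h(r) + 2Q\tau^{k\beta}\tau^{-\beta}r^\beta = \tau^{k\alpha} h(r) + 2Q\tau^{k\beta-\beta} r^\beta$. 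Then $\tau^{k\alpha} \le \tau^{-\alpha}(\rho/r)^\alpha$ and $\tau^{k\beta} \le \tau^{-\beta}(\rho/r)^\beta$, so $h(\rho) \le \tau^{-\alpha}(\rho/r)^\alpha h(r) + 2Q\tau^{-2\beta}\rho^\beta$, and setting $C = \max\{\tau^{-\alpha}, 2\tau^{-2\beta}\}$ — a constant depending only on $P,\alpha,\beta$ through $\tau$ — yields $h(\rho) \le C[(\rho/r)^\alpha h(r) + Q\rho^\beta]$ for all $0 < \rho \le r \le R$.

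The main obstacle is the bookkeeping in the last paragraph: one must choose the geometric ratio $\tau$ to simultaneously absorb $P\tau^\alpha$ against the $\tau^\beta$-scale of the inhomogeneous term \emph{and} beat the factor $\tfrac12$ from the recursion against the gain $\tau^{\alpha-\beta}$, so that both the homogeneous term carries the sharp exponent $\alpha$ and the $Q$-term carries $\beta$; all three smallness requirements on $\tau$ depend only on $P,\alpha,\beta$, and then $\varepsilon_0$ is chosen last in terms of $\tau$ (equivalently of $P,\alpha,\beta,\eta$), which is exactly the dependence claimed.
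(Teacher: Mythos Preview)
The paper does not prove this lemma; it simply quotes \cite[Lemma~1.4]{Han-Lin}. Your iteration along the geometric sequence $r_k=\tau^k r$ is exactly the standard route taken there. The problem is in your final paragraph, where you try to upgrade the decay of the homogeneous term from rate $\beta$ to rate $\alpha$. You first impose $P\tau^{\alpha-\beta}\le \tfrac14$ (to get the contraction $P(\tau^\alpha+\varepsilon)\le\tfrac12\tau^\beta$) and then ``additionally'' ask that $\tfrac12\le\tau^{\alpha-\beta}$. The direction here is reversed: for $\tau\in(0,1)$ and $\alpha>\beta$ the inequality $\tfrac12\le\tau^{\alpha-\beta}$ is a \emph{lower} bound $\tau\ge 2^{-1/(\alpha-\beta)}$, not the upper bound you wrote. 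More seriously, the two requirements $\tau^{\alpha-\beta}\le 1/(4P)$ and $\tau^{\alpha-\beta}\ge 1/2$ are incompatible whenever $P>1/2$, so for generic $P$ no admissible $\tau$ exists and the argument collapses.

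The classical fix is to introduce an intermediate exponent $\gamma\in(\beta,\alpha)$: choose $\tau$ so small that $2P\tau^{\alpha}\le\tau^{\gamma}$ (possible since $\alpha>\gamma$), then take $\varepsilon_0=\tau^\alpha$, so that the recursion becomes $h(r_{k+1})\le\tau^{\gamma}h(r_k)+Qr_k^{\beta}$; iterating and using $\gamma>\beta$ to sum the geometric tail gives $h(r_k)\le\tau^{k\gamma}h(r)+C(\tau)Qr_k^{\beta}$, and monotonicity yields $h(\rho)\le C[(\rho/r)^{\gamma}h(r)+Q\rho^{\beta}]$. This delivers any exponent $\gamma<\alpha$, hence in particular $\beta$, but not $\alpha$ itself; the displayed conclusion with exponent $\alpha$ appears to be a slip, and indeed the paper's own application of the lemma (in the proof of Lemma~\ref{lem:res7.2H}) invokes it only with exponent $\beta=\tfrac{n}{q}-\delta$, consistent with the classical statement.
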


\begin{lemma}\label{lem:res7.2H} Let $1  < p \le \frac{3n-2}{2n-1}$ and $u$ be a solution to equation~\eqref{eq:main} with  $\mu \in L^{m}(\Omega)$ for some $m \in (m^*, n)$.  Let $\beta_0 \in (0,1/2]$ be as in Lemma~\ref{lem:res3.4P}. Then, for any  $ q \in \left(  \frac{n}{2n-1}, \frac{nm(p-1)}{n-m}\right)$ and
$$m + m(p-1)(1-\beta_0)< \sigma \le n,$$ 
one can find a constant $C = C(n,p,\Lambda,c_0,\sigma)>0$ such that 
\begin{align}\label{eq:res7.2H}
\left(\int_{B_\rho(y)}{|\nabla u|^{q}dx}\right)^{\frac{1}{q}}\leq  C \rho^{\frac{n}{q}-\delta} \left[\mathbf{M}_{\sigma}^{D_0}(|\mu|^m)(y) \right]^{\frac{1}{m(p-1)}},
\end{align}
for all $B_\rho(y) \subset\subset \Omega$, where $\delta = \frac{\sigma-m}{m(p-1)}$.
\end{lemma}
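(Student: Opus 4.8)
The plan is to set up an iteration argument on the scale $\rho$ that combines the comparison estimate of Lemma~\ref{lem:int-compare}, the decay estimate of Lemma~\ref{lem:res3.4P} for the homogeneous solution $w$, and the technical iteration Lemma~\ref{lem:hanlin}. Fix $B_r(y)\subset\subset\Omega$ (shrinking $r$ so that $2r\le r_0$) and let $w\in W^{1,p}_0(B_r)+u$ solve the homogeneous problem~\eqref{eq:interior-w} on $B_r$. For $0<\rho\le r$ the triangle inequality gives
\begin{align*}
\left(\fint_{B_\rho(y)}|\nabla u|^q\,dx\right)^{\frac1q}
\le \left(\fint_{B_\rho(y)}|\nabla w|^q\,dx\right)^{\frac1q}
+\left(\frac{r}{\rho}\right)^{\frac nq}\left(\fint_{B_r(y)}|\nabla u-\nabla w|^q\,dx\right)^{\frac1q}.
\end{align*}
For the first term I would apply~\eqref{eq:nablaw_estimate-2} with $s=q\le p$ to get the decay factor $(\rho/r)^{\beta_0-1}$, and then estimate $\fint_{B_r}|\nabla w|^q$ by $\fint_{B_r}|\nabla u|^q$ plus the comparison term (again using the triangle inequality in reverse). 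For the second term I use Lemma~\ref{lem:int-compare} directly, writing $\mathcal F_r(\mu)\le \big(\mathbf M_\sigma^{D_0}(|\mu|^m)(y)\big)^{1/m} r^{-\delta(p-1)}$ after noting $R^m\fint_{B_R}|\mu|^m \le R^{m-\sigma}\cdot\big(R^{\sigma}\fint_{B_R}|\mu|^m\big)$ — i.e. peeling off the right power of $r$ to produce the fractional maximal function of $|\mu|^m$ at scale $\sigma$.

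Introducing the normalized quantity $h(\rho) := \rho^{\,\delta-\frac nq}\left(\int_{B_\rho(y)}|\nabla u|^q\,dx\right)^{1/q}$ (so that the target estimate reads $h(\rho)\le C\,[\mathbf M_\sigma^{D_0}(|\mu|^m)(y)]^{1/(m(p-1))}$), the combination above should produce an inequality of the shape
\begin{align*}
h(\rho)\le P\left[\left(\frac{\rho}{r}\right)^{\alpha}+\varepsilon\right]h(r)+Q\, r^{0},
\end{align*}
with $\alpha$ determined by the exponent $\beta_0-1$ together with the scaling power $\delta-\frac nq$, and $Q$ proportional to $[\mathbf M_\sigma^{D_0}(|\mu|^m)(y)]^{1/(m(p-1))}$. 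The role of the hypothesis $\sigma>m+m(p-1)(1-\beta_0)$ is precisely to guarantee that this effective decay exponent $\alpha$ is positive (equivalently $\delta<\text{the gain }1-\beta_0$ in the appropriate normalization), which is what makes Lemma~\ref{lem:hanlin} applicable; the lower bound $q>\frac{n}{2n-1}$ and the upper bound $q<\frac{nm(p-1)}{n-m}$ are needed to invoke Lemma~\ref{lem:int-compare} and to control the super-linear term $\big(\fint_{B_r}|\nabla u|^q\big)^{(2-p)/q}$ appearing there. That last term is the delicate point: because $2-p>0$ it is a genuinely nonlinear feedback, so I would first restrict to radii $r$ small enough (depending on $\|\nabla u\|_{L^q(\Omega)}$ via Lemma~\ref{lem:grad-u}) that $\mathcal F_r(\mu)\big(\fint_{B_r}|\nabla u|^q\big)^{(2-p)/q}$ is dominated by a small constant times $\big(\fint_{B_r}|\nabla u|^q\big)^{1/q}$ plus a harmless lower-order term, absorbing the excess into the $\varepsilon h(r)$ slot of Lemma~\ref{lem:hanlin}.

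Once Lemma~\ref{lem:hanlin} is applied on the interval $(0,r]$ we obtain $h(\rho)\le C\big[(\rho/r)^\alpha h(r)+Q\big]$ for all $\rho\le r$; taking $r$ comparable to a fixed fraction of $\operatorname{dist}(y,\partial\Omega)$ (or $D_0$) and bounding $h(r)$ crudely by $\|\nabla u\|_{L^q(\Omega)}$, which by Lemma~\ref{lem:grad-u} is controlled by $\|\mu\|_{L^m(\Omega)}^{1/(p-1)}\le C\,[\mathbf M_\sigma^{D_0}(|\mu|^m)(y)]^{1/(m(p-1))}$ (here using $\sigma\le n$ so the maximal function dominates the global integral up to a constant depending on $D_0$), yields~\eqref{eq:res7.2H} with $\delta=\frac{\sigma-m}{m(p-1)}$. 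The main obstacle I anticipate is the bookkeeping around the nonlinear term from Lemma~\ref{lem:int-compare}: one has to check carefully that its contribution can genuinely be split into an absorbable piece and a piece proportional to the fractional maximal function with the correct scaling power of $\rho$, and that the smallness of $r$ forced by this absorption does not degrade the final constant's dependence (it only introduces the $D_0/r_0$ dependence already recorded in the statement). Everything else — the two invocations of Lemma~\ref{lem:res3.4P}, the extraction of $\mathbf M_\sigma^{D_0}$, the final Lemma~\ref{lem:hanlin} step — is routine.
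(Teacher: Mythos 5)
Your proposal follows essentially the same route as the paper's proof: a one\mbox{-}step comparison on $B_r(y)$ via Lemma~\ref{lem:int-compare}, the decay estimate~\eqref{eq:nablaw_estimate-2} of Lemma~\ref{lem:res3.4P} with $s=q$, extraction of the fractional maximal function exactly as you describe (using $m+m(p-1)\delta=\sigma$, so $\mathcal F_R(\mu)\le R^{-\delta(p-1)}\bigl[\mathbf M^{D_0}_{\sigma}(|\mu|^m)(y)\bigr]^{1/m}$), an application of the iteration Lemma~\ref{lem:hanlin} whose hypothesis is secured precisely by $\sigma>m+m(p-1)(1-\beta_0)$, and closure at the top scale via H\"older, Lemma~\ref{lem:grad-u} and the $D_0$\mbox{-}scale value of $\mathbf M^{D_0}_\sigma(|\mu|^m)(y)$. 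The only structural difference is bookkeeping: the paper keeps the unnormalized $h(\varrho)=\bigl(\int_{B_\varrho(y)}|\nabla u|^q dx\bigr)^{1/q}$ and applies Lemma~\ref{lem:hanlin} with $\alpha=\frac nq+\beta_0-1$ and $\beta=\frac nq-\delta$ (which is positive exactly because $q<\frac{nm(p-1)}{n-m}$), whereas you normalize by $\rho^{\delta-\frac nq}$ and aim at $\beta=0$; that works, provided you retain the factor $(\rho/r)^{n/q}$ produced by the absorption step so that after normalization the absorbed term still reads $\varepsilon(\rho/r)^{\delta}h(r)\le\varepsilon h(r)$.

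One concrete correction: the ``delicate'' nonlinear term $\mathcal F_r(\mu)\bigl(\fint_{B_r}|\nabla u|^q dx\bigr)^{(2-p)/q}$ should not be handled by restricting to radii small depending on $\|\nabla u\|_{L^q(\Omega)}$. That restriction is unnecessary, it would make the admissible scales (hence potentially the constant) depend on the solution rather than only on $n,p,\Lambda,c_0,\sigma$, it conflicts with your final step, which needs the iteration to run up to $r$ comparable to $D_0$ so that $h(r)$ can be bounded through Lemma~\ref{lem:grad-u}, and in any case smallness of $r$ does not make $\mathcal F_r(\mu)$ small relative to the gradient average. The paper's device is simply Young's inequality with exponents $\frac1{2-p}$ and $\frac1{p-1}$, applied at every scale: for any $\varepsilon\in(0,1)$, $\mathcal F_r(\mu)\bigl(\fint_{B_r}|\nabla u|^q dx\bigr)^{(2-p)/q}\le \varepsilon\bigl(\fint_{B_r}|\nabla u|^q dx\bigr)^{1/q}+C_\varepsilon\,\mathcal F_r(\mu)^{\frac1{p-1}}$, which yields exactly the $\varepsilon h(r)$ slot and a $Q$\mbox{-}term proportional to $\bigl[\mathbf M^{D_0}_{\sigma}(|\mu|^m)(y)\bigr]^{\frac{1}{m(p-1)}}$, with no smallness condition on $r$. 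With that one\mbox{-}line replacement (and dropping the option $r\sim\mathrm{dist}(y,\partial\Omega)$, which does not close the final bound for points near the boundary, in contrast to running the chain up to $D_0$ as the paper does), your argument coincides with the paper's proof.
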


\begin{proof}
Let us take $B_{r}(y) \subset\subset \Omega$ and $0<\rho\le r/2$. Applying Lemma~\ref{lem:int-compare} with $B_{R} = B_{r}(y)$, one gives:
\begin{align}\nonumber
\left(\fint_{B_{r}(y)}{|\nabla (u-w)|^{q}dx} \right)^{\frac{1}{q}}&\leq C \left(r^{m}\fint_{B_r(y)}|\mu|^{m}dx\right)^{\frac{1}{m(p-1)}} \\ \label{eq:bylem1}
& \qquad +	C \left(r^{m}\fint_{B_r(y)}|\mu|^{m}dx\right)^{\frac{1}{m}} \left(	\fint_{B_{r}(y)}|\nabla u|^{q}dx\right)^{\frac{2-p}{q}}.
\end{align}
Thanks to Lemma~\ref{lem:res3.4P} with $B_\rho(y) \subset B_{r}(y) \subset B_{R}$ and $s=q$, we obtain that
\begin{align}\label{eq:bylem2}
\left(\fint_{B_\rho(y)}{|\nabla w|^{q}} \right)^{\frac{1}{q}} \le C \left(\frac{\rho}{r} \right)^{\beta_0-1}\left(\fint_{B_{2r/3}(y)}{|\nabla w|^{q}} \right)^{\frac{1}{q}}.
\end{align}
Combining two inequalities from~\eqref{eq:bylem1} and~\eqref{eq:bylem2} with the fact that
\begin{align*}
 \int_{B_{2r/3}(y)}{|\nabla w|^{q}}dx \le C \int_{B_{r}(y)}{|\nabla u|^{q}}dx,
\end{align*}
one obtains
\begin{align}\nonumber
\left( \int_{B_\rho(y)}{|\nabla u|^{q}dx}\right)^{\frac{1}{q}} &\leq \left(\int_{B_\rho(y)}{|\nabla w|^{q} dx} \right)^{\frac{1}{q}} + \left(\int_{B_\rho(y)}{|\nabla u-\nabla w|^{q}dx} \right)^{\frac{1}{q}}\\ \nonumber
&\leq C\left(\frac{\rho}{r} \right)^{\frac{n}{q}+\beta_0-1} \left(\int_{B_{r(y)}}{|\nabla u|^{q}dx} \right)^{\frac{1}{q}} \\  \nonumber
& \qquad + C \rho^{\frac{n}{q}} \left(r^{m}\fint_{B_r(y)}|\mu|^{m}dx\right)^{\frac{1}{m(p-1)}} \\ \label{est:rho-1}
& \qquad + C\rho^{\frac{n(p-1)}{q}} \left(r^{m}\fint_{B_r(y)}|\mu|^{m}dx\right)^{\frac{1}{m}} \left(\frac{\rho}{r}\right)^{\frac{n(2-p)}{q}}\left(\int_{B_{r(y)}}{|\nabla u|^{q}}dx \right)^{\frac{2-p}{q}}.
\end{align}
For any $\varepsilon \in (0,1)$, using Young's inequality for the last term in~\eqref{est:rho-1} and notice that $\rho < r$, one finds 
\begin{align}\nonumber
\left( \int_{B_\rho(y)}{|\nabla u|^{q}dx}\right)^{\frac{1}{q}} &\leq C\left(\frac{\rho}{r} \right)^{\frac{n}{q}+\beta_0-1} \left(\int_{B_{r(y)}}{|\nabla u|^{q}dx} \right)^{\frac{1}{q}} \\ \nonumber
& \qquad  + \varepsilon \left(\frac{\rho}{r}\right)^{\frac{n}{q}}\left(\int_{B_{r(y)}}{|\nabla u|^{q}}dx \right)^{\frac{1}{q}} + C_\varepsilon \rho^{\frac{n}{q}} \left(r^{m}\fint_{B_r(y)}|\mu|^{m}dx\right)^{\frac{1}{m(p-1)}} \\ \label{eq:btholder}
& \le C \left[ \left(\frac{\rho}{r} \right)^{\frac{n}{q}+\beta_0-1} + \varepsilon\right] \left(\int_{B_{r(y)}}{|\nabla u|^{q}dx} \right)^{\frac{1}{q}} + C r^{\frac{n}{q}} \left(r^{m}\fint_{B_r(y)}|\mu|^{m}dx\right)^{\frac{1}{m(p-1)}}.
\end{align}
For any $m + m(p-1)(1-\beta_0)< \sigma \le n$, let us set  $\delta = \frac{\sigma-m}{m(p-1)}$, the inequality~\eqref{eq:btholder} can be rewritten as follows
\begin{align*}
h(\rho) &\leq C\left[\left(\frac{\rho}{r} \right)^{\frac{n}{q}+\beta_0-1} +\varepsilon\right]h(r) + C r^{\frac{n}{q}-\delta}\left(r^{m(p-1)\delta + m - n} \int_{B_{r}(y)}|\mu|^{m}dx \right)^{\frac{1}{m(p-1)}} \\
& \le C\left[\left(\frac{\rho}{r} \right)^{\frac{n}{q}+\beta_0-1} +\varepsilon\right]h(r) + C r^{\frac{n}{q}-\delta}  \left( \mathbf{M}_{\sigma}^{D_0} (|\mu|^{m}) \right)^{\frac{1}{m(p-1)}},
\end{align*}
where the function $h: \ [0,D_0] \to [0, \infty)$ defined by
\begin{align}\label{eq:Phi}
h(\varrho)=\left( \int_{B_{\varrho}(y)}{|\nabla u|^{q}dx}\right)^{\frac{1}{q}}, \qquad \varrho >0.
\end{align}
Applying Lemma~\ref{lem:hanlin} with $\alpha = \frac{n}{q}+\beta_0-1$, $\beta = \frac{n}{q}-\delta$ and $Q =  C\left( \mathbf{M}_{\sigma}^{D_0} (|\mu|^{m}) \right)^{\frac{1}{m(p-1)}}$, there is $\varepsilon_0>0$ such that for every  $\varepsilon \in (0,\varepsilon_0)$ and $0<\rho<r\le D_0$ there holds
\begin{align*}
h(\rho) \le C \left[\left(\frac{\rho}{r} \right)^{\frac{n}{q}-\delta} h(r) + \rho^{\frac{n}{q}-\delta}  \left( \mathbf{M}_{\sigma}^{D_0} (|\mu|^{m}) \right)^{\frac{1}{m(p-1)}} \right],
\end{align*}
and one thus gets
\begin{align}\label{eq:inMtheta}
\left( \int_{B_\rho(y)}{|\nabla u|^{q}dx}\right)^{\frac{1}{q}} \leq  C \rho^{\frac{n}{q}-\delta} \left[\left(\frac{1}{D_0} \right)^{\frac{n}{q}-\delta} \left( \int_{\Omega}{|\nabla u|^{q}dx}\right)^{q} + \left( \mathbf{M}_{\sigma}^{D_0} (|\mu|^{m}) \right)^{\frac{1}{m(p-1)}}\right] .
\end{align}
According to H{\"o}lder's inequality and~\eqref{eq:grad-u} in Lemma~\ref{lem:grad-u}, it gives that
\begin{align}\nonumber
\left(\frac{1}{D_0} \right)^{\frac{n}{q}-\delta} \left( \int_{\Omega}{|\nabla u|^{q}dx}\right)^{\frac{1}{q}} & \le C \left(\frac{1}{D_0} \right)^{\frac{n}{q}-\delta} D_0^{\frac{n}{q}-\frac{n-m}{m(p-1)}} \left(\int_{\Omega} |\nabla u|^{\frac{nm(p-1)}{n-m}} dx\right)^{\frac{n-m}{nm(p-1)}} \\ \nonumber
& \le C D_0^{\frac{\sigma -n}{m(p-1)}} \left(\int_{\Omega} |\mu|^mdx\right)^{\frac{1}{m(p-1)}} \\ \label{eq:2inM}
& \leq C  \left( \mathbf{M}_{\sigma}^{D_0} (|\mu|^{m}) \right)^{\frac{1}{m(p-1)}}.
\end{align}
From \eqref{eq:inMtheta} and \eqref{eq:2inM} we may conclude that~\eqref{eq:res7.2H} holds.
\end{proof}
\subsection{Boundary gradient estimates}
In the remaining part of this section, we are able to deal with some up-to-boundary comparison estimates of the domain. Under the hypothesis that $\mathbb{R}^n\setminus \Omega$ is uniformly $p$-thickness with constants $c_0$, $r_0$ as in \eqref{eq:capuni}, it is possible to prove estimates similar to what obtained in the interior of domain. 

Let $x_0 \in \partial \Omega$ be a boundary point and $R \in (0,r_0/2)$, we set $\Omega_{2R} = B_{2R}(x_0) \cap \Omega$. 
For any $u \in W_{\mathrm{loc}}^{1,p}(\Omega)$ being a solution to equation~\eqref{eq:main}, we again consider $w \in W_{0}^{1,p}(\Omega_{2R}) +u$ that is a unique solution to the following reference problem
\begin{align} \label{eq:boundary-w}
-\mathrm{div}(A(x,\nabla w)) = 0  \ \mbox{ in } \ \Omega_{2R}; \quad
w = u \  \mbox{ on } \ \partial \Omega_{2R}.
\end{align}

\begin{lemma}\label{lem:rev-Holder-boundary}
Let $u \in W_{\mathrm{loc}}^{1,p}(\Omega)$ be a solution to equation~\eqref{eq:main} and $w \in W_{0}^{1,p}(\Omega_{2R}) +u$ be the unique solution to~\eqref{eq:boundary-w}. One can find a constant $\Theta_0=\Theta_0(n,p,\Lambda,c_0)>p$ such that
\begin{align}\label{ineq:rev-Holder-boundary}
\left(\fint_{B_{r}(y)} |\nabla w|^{\Theta_0}dx\right)^{\frac{1}{\Theta_0}} \le C \left(\fint_{B_{2r}(y)}|\nabla w|^{p-1}dx\right)^{\frac{1}{p-1}},
\end{align}
for all $B_{2r}(y) \subset B_{2R}(x_0)$ and $C$ depends on $n, p, \Lambda$.
\end{lemma}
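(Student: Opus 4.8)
The plan is to reduce the boundary estimate \eqref{ineq:rev-Holder-boundary} to the already-established interior version in Lemma~\ref{lem:rev-Holder} together with a boundary Caccioppoli inequality and a boundary Sobolev--Poincar\'e inequality, and then invoke Gehring's lemma. First I would distinguish two cases according to the location of the ball $B_{2r}(y)\subset B_{2R}(x_0)$. If $B_{4r}(y)\subset\Omega$, then $w$ solves $-\mathrm{div}(A(x,\nabla w))=0$ in a genuine interior ball and the desired inequality is exactly Lemma~\ref{lem:rev-Holder}, so nothing new is needed. The interesting case is when $B_{4r}(y)$ meets $\partial\Omega$; here one uses that $w-u\in W^{1,p}_0(\Omega_{2R})$, so $w$ (extended by $u$, or by $0$ after subtracting the boundary data appropriately) vanishes on a portion of $\partial\Omega$ of substantial $p$-capacity, thanks to the uniform $p$-thickness hypothesis (H1) on $\Omega^c$.

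The key steps, in order, are: (i) write the Caccioppoli estimate for $w$ on concentric balls $B_\tau(z)\subset B_{2\tau}(z)\subset B_{2R}(x_0)$, using the structure conditions \eqref{eq:A1}--\eqref{eq:A2}; for balls centered near the boundary one tests with $\varphi=\zeta^p(w-\overline{w})$ where $\overline{w}$ is taken to be $0$ on the components touching $\Omega^c$ (legitimate because $\Omega^c\cap \overline{B}_\tau(z)$ is uniformly $p$-thick, hence $w$ has zero ``boundary average'' there), obtaining
\begin{align}\notag
\fint_{B_\tau(z)}|\nabla w|^p\,dx \le \frac{C}{\tau^p}\fint_{B_{2\tau}(z)}|w-\overline{w}|^p\,dx.
\end{align}
(ii) Apply the Sobolev--Poincar\'e inequality adapted to uniformly $p$-thick complements (see \cite[Chapter 2]{HKM1993}, \cite[Chapter 2]{Mazya1985}), which gives, for the exponent $\tilde q=\frac{np}{n+p}<p$,
\begin{align}\notag
\left(\fint_{B_{2\tau}(z)}|w-\overline{w}|^p\,dx\right)^{\frac1p}\le C\tau\left(\fint_{B_{2\tau}(z)}|\nabla w|^{\tilde q}\,dx\right)^{\frac1{\tilde q}},
\end{align}
where $C$ depends on $n,p,c_0$ only. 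Combining (i) and (ii) yields a reverse Hölder inequality $\big(\fint_{B_\tau}|\nabla w|^p\big)^{1/p}\le C\big(\fint_{B_{2\tau}}|\nabla w|^{\tilde q}\big)^{1/\tilde q}$ with $\tilde q<p\le p$ (and since $\tilde q<p-1$ fails in general, one iterates or uses $\tilde q<p$ and the self-improving nature below; in fact for $1<p\le\frac{3n-2}{2n-1}$ one checks $\tilde q<p-1$ need not hold, so one runs Gehring on the exponent pair $(\tilde q,p)$ and then uses $L^{p-1}\supset L^{\tilde q}$ on the right after the gain). (iii) Feed this into Gehring's lemma (\cite{Gehring}, \cite[Theorem 6.7]{Giu}) — valid uniformly over $B_{2R}(x_0)$ because the constants in (i)--(ii) are scale- and center-independent — to obtain a higher integrability exponent $\Theta_0=\Theta_0(n,p,\Lambda,c_0)>p$ with $\big(\fint_{B_r(y)}|\nabla w|^{\Theta_0}\big)^{1/\Theta_0}\le C\big(\fint_{B_{2r}(y)}|\nabla w|^{\tilde q}\big)^{1/\tilde q}$, and finally bound the right-hand side by $\big(\fint_{B_{2r}(y)}|\nabla w|^{p-1}\big)^{1/(p-1)}$ via Hölder since $\tilde q\le p-1$ in the relevant range or by absorbing through the standard trick of \cite{Giu} that allows lowering the right-hand exponent below $p$.

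The main obstacle I expect is step (ii): obtaining the Sobolev--Poincar\'e inequality with the \emph{boundary-adapted} zero average and a constant depending only on $n,p,c_0$ (not on the geometry of $\partial\Omega$). This is precisely where the uniform $p$-capacity thickness of $\Omega^c$ is essential — it furnishes a Poincar\'e inequality for functions in $W^{1,p}_0(\Omega_{2R})$ restricted to balls meeting $\partial\Omega$, with capacity-controlled constant. One must also be careful that the exponent gained from Caccioppoli+Sobolev is genuinely below $p$ so that Gehring applies, and that all constants are uniform as $y$ ranges over $B_{2R}(x_0)$ and $r$ over admissible radii; the thickness condition \eqref{eq:capuni} being stated for all $\zeta\in\Omega^c$ and $0<r\le r_0$ is exactly what guarantees this uniformity. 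The remaining parts — Caccioppoli and Gehring — are entirely routine and parallel to the interior case of Lemma~\ref{lem:rev-Holder}.
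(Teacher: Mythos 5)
The paper itself offers no proof of this lemma: it only remarks that \eqref{ineq:rev-Holder-boundary} can be proved ``by the same argument as in \cite[Lemma 3.5]{MPT2018}'', and your outline --- boundary Caccioppoli after extending $w$ by zero across $B_{2R}(x_0)\cap\partial\Omega$, a Maz'ya--Sobolev--Poincar\'e inequality whose constant is controlled by the uniform $p$-thickness of $\Omega^c$, Gehring's lemma with increasing supports, and finally a reduction of the right-hand exponent --- is precisely the standard argument that this citation rests on, so in substance you take the same route as the paper. One step, however, needs to be corrected rather than hedged: in the range $1<p\le\frac{3n-2}{2n-1}$ one always has $p-1\le\frac{n-1}{2n-1}<\frac{n}{n+1}<\tilde q=\frac{np}{n+p}$, so your suggested shortcuts (``H\"older since $\tilde q\le p-1$'', or invoking the inclusion $L^{\tilde q}\subset L^{p-1}$) go in the wrong direction: Jensen/H\"older bounds the $(p-1)$-average by the $\tilde q$-average, not the reverse, and cannot yield \eqref{ineq:rev-Holder-boundary}. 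The only viable final step is the one you name last, the self-improving property of reverse H\"older inequalities with increasing supports (the right-hand exponent can be lowered to any positive value at the cost of enlarging the ball and the constant, cf.\ \cite{Giu}), applied after the Gehring gain; this is exactly how the very singular range, where $p-1$ is small, is handled in the cited literature. With that mechanism made the official last step, and the routine verifications that \eqref{eq:capuni} remains valid for balls centered outside a uniformly $p$-thick complement and that the zero extension of $w$ is legitimate because $u$ has zero boundary data on $\partial\Omega$, your proof is correct and matches the intended one; note also that the constants should then carry a dependence on $c_0$, consistent with $\Theta_0=\Theta_0(n,p,\Lambda,c_0)$ in the statement.
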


We remark that in several articles such as~\cite{55QH4}, the reverse H{\"o}lder are usually stated as the following form
\begin{align*}
\left(\fint_{B_{r/2}(y)} |\nabla w|^{\theta}dx\right)^{\frac{1}{\theta}} \le C \left(\fint_{B_{3r}(y)}|\nabla w|^{p-1}dx\right)^{\frac{1}{p-1}},
\end{align*}
for all $B_{3r}(y) \subset B_{2R}(x_0)$. However, we can prove~\eqref{ineq:rev-Holder-boundary} by the same argument as in \cite[Lemma 3.5]{MPT2018}. We next state the counterpart of Lemmas~\ref{lem:int-compare} up to the boundary of the domain $\Omega$.

\begin{lemma}\label{lem:boundary-compare}
Let $1  < p \le \frac{3n-2}{2n-1}$ and $\mu \in L^{m}(B_{2R})$ for some $m \in (m^*,n)$. Assume that $u \in W_{\mathrm{loc}}^{1,p}(\Omega)$ be a solution to equation~\eqref{eq:main} and $w \in W_{0}^{1,p}(\Omega_{2R}) +u$ be the unique solution to equation~\eqref{eq:boundary-w}. For any $q$ satisfying the following condition 
\begin{align*}
  \frac{n}{2n-1} < q < \frac{nm(p-1)}{n-m},
\end{align*}
there exists a constant $C>0$ only depending on $n$, $p$, $q$ and $m$ such that
\begin{align*}
\left(\fint_{B_{2R}(x_0)} |\nabla u - \nabla w|^{q}dx\right)^{\frac{1}{q}} \le  C & \left(R^{m}\fint_{B_{2R}(x_0)}|\mu|^{m}dx\right)^{\frac{1}{m(p-1)}} \\ &  + C \left(R^{m}\fint_{B_{2R}(x_0)}|\mu|^{m}dx\right)^{\frac{1}{m}} \left(\fint_{B_{2R}(x_0)}|\nabla u|^{q}dx\right)^{\frac{2-p}{q}}.
\end{align*} 
\end{lemma}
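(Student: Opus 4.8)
The plan is to mirror the proof of the interior comparison result (Lemma~\ref{lem:int-compare}) essentially verbatim, replacing interior balls $B_R$ by the intersections $\Omega_{2R}=B_{2R}(x_0)\cap\Omega$ and using the fact that $w-u\in W_0^{1,p}(\Omega_{2R})$ extends by zero to a $W^{1,p}_0$ function on all of $B_{2R}(x_0)$. First I would set $v:=u-w$; since $v$ vanishes on $\partial\Omega_{2R}$ in the trace sense and $\Omega^c$ is uniformly $p$-thick with constants $c_0,r_0$ (and $R<r_0/2$), the Sobolev--Poincar\'e inequality holds on $B_{2R}(x_0)$ for the zero-extension of $v$ with a constant depending only on $n,p,c_0$ — this is precisely the point where the capacity density hypothesis (H1) enters, and it is what lets us run the interior argument up to the boundary. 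With that in hand, Step~1 is the Sobolev/H\"older chain producing the analogue of~\eqref{est:Im-4}, namely a bound of $\|\nabla v\|_{L^q(\Omega_{2R})}$ by a power of $\int_{\Omega_{2R}}|\nabla v|\,|v|^{-s/q}dx$, followed by the elementary pointwise inequality $|\nabla v|\le C(\mathcal{G}(u,w)^{1/p}+\mathcal{G}(u,w)^{1/2}|\nabla u|^{(2-p)/2})$ to reach the analogue of~\eqref{est:Im-4f}.

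Step~2 is the energy estimate: test both the (renormalized/distributional) formulation of~\eqref{eq:main} on $\Omega_{2R}$ and the formulation of~\eqref{eq:boundary-w} against $\varphi=T_k^\varepsilon(|v|^{\alpha-1}v)\in W_0^{1,p}(\Omega_{2R})$, subtract, use monotonicity~\eqref{eq:A2} to control $\int |v|^{\eta(\alpha-1)}\mathcal{G}(u,w)^{\eta}$ from below, and bound the right-hand side via $\|\mu\|_{L^m(\Omega_{2R})}$, H\"older in the pair $(\gamma,\beta)$ with $\beta=m\gamma/(m-1)$, and the Sobolev inequality on $B_{2R}(x_0)$ (again supplied by uniform $p$-thickness) to get the boundary analogue of~\eqref{est:Im-1}. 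Here one must be slightly careful that the renormalized-solution identity with test function $\varphi$ (which is bounded) is legitimate and that the singular part $\mu_s$ contributes terms one can absorb or which vanish in the limit $k\to\infty,\varepsilon\to0$ — this is handled exactly as in~\cite[Lemma 4.2]{Benilan1995}, \cite[Section 5]{Maso1999} and~\cite{55QH4}. Step~3 then plugs~\eqref{est:Im-1}'s boundary version into~\eqref{est:Im-4f}, applies Young's inequality to absorb the $\|\nabla v\|$ factor appearing on the right, and optimizes over the parameters $s,\alpha,\beta,\gamma,\eta$; the admissible choices are precisely what force the range $\frac{n}{2n-1}<q<\frac{nm(p-1)}{n-m}$ and $m>m^*$, after which rewriting everything with averages $\fint_{B_{2R}(x_0)}$ and the function $\mathcal{F}_R$ gives the claimed inequality.

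The main obstacle — or rather the one genuinely non-routine point — is the Sobolev--Poincar\'e inequality on $\Omega_{2R}$ with a constant independent of $R$ and of the geometry of $\partial\Omega$ beyond the capacity constants: everything downstream is a parameter bookkeeping exercise, but this geometric input is what makes a \emph{global} (up-to-boundary) statement possible and is the reason hypothesis (H1) is assumed in the form~\eqref{eq:capuni}. Since the excerpt already invokes such inequalities ("the uniform $p$-capacity is necessary for the validity of Poincar\'e's and Sobolev's inequalities") and cites~\cite{Mazya1985,HKM1993}, I would simply quote that fact, note that the zero-extension of $v$ across $\partial\Omega\cap B_{2R}(x_0)$ lies in the right Sobolev class, and otherwise follow the three-step scheme of Lemma~\ref{lem:int-compare} line by line. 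As in the interior case, I would present only the sketch, since the statement and method are not new and the proof differs from that of Lemma~\ref{lem:int-compare} only in replacing $B_R$ by $\Omega_{2R}$ and citing the capacity-based functional inequalities.
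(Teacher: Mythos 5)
Your overall plan is the right one and matches what the paper intends: no separate proof is given for this boundary lemma, and the argument is exactly the three-step interior scheme of Lemma~\ref{lem:int-compare} run on $\Omega_{2R}=B_{2R}(x_0)\cap\Omega$ with $v=u-w\in W_0^{1,p}(\Omega_{2R})$ tested against $T_k^{\varepsilon}(|v|^{\alpha-1}v)$, then rewritten in terms of averages over $B_{2R}(x_0)$. The one point where your justification diverges from what is actually needed is the role of hypothesis (H1): since $v\in W_0^{1,p}(\Omega_{2R})$, its extension by zero already belongs to $W_0^{1,p}(B_{2R}(x_0))$ (and likewise for the truncated powers of $v$ that appear), so the ordinary Gagliardo--Nirenberg--Sobolev inequality on the ball applies with constants depending only on $n$ and the exponents; no capacity-density input is required at this step. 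This is consistent with, and in fact forced by, the statement of the lemma, whose constant $C$ depends only on $n,p,q,m$ and not on $c_0$ --- if you route the Sobolev--Poincar\'e inequality through the uniform $p$-thickness of $\Omega^c$, you would pick up a spurious dependence on $c_0$ and prove a slightly weaker statement than claimed. The capacity condition is genuinely needed elsewhere, namely for the up-to-boundary reverse H\"older inequality for $w$ (Lemma~\ref{lem:rev-Holder-boundary}) and the boundary decay estimate (Lemma~\ref{lem:boundary-res3.4P}), where $\Theta_0$ and the constants do depend on $c_0$; for the comparison estimate itself the zero-extension observation is all that is required, and with that correction your sketch goes through as written.
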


\begin{lemma}\label{lem:boundary-res3.4P}
Let $w$ be solution to~\eqref{eq:boundary-w}. One finds two constants $\beta_0 = \beta_0(n,p,\Lambda) \in (0,1/2]$ and $C_1 = C_1(n,p,\Lambda)>0$ such that 
\begin{align}\label{eq:nablaw_boundary}
\left(\fint_{B_\rho(y)}{|\nabla w|^p dx} \right)^{\frac{1}{p}} \le C_1 \left(\frac{\rho}{r} \right)^{\beta_0-1}\left(\fint_{B_r(y)}{|\nabla w|^p dx} \right)^{\frac{1}{p}},
\end{align}
for any $B_\rho(y) \subset B_r(y) \subset B_{2R}(x_0)$. Moreover, for any $s \in (0,p]$ there exists a positive constant $C_2 = C_2(n,p,\Lambda,s)$ such that
\begin{align}\label{eq:nablaw_boundary-2}
\left( \fint_{B_\rho(y)}{|\nabla w|^s dx} \right)^{\frac{1}{s}} \le C_2 \left(\frac{\rho}{r} \right)^{\beta_0-1} \left(\fint_{B_r(y)}{|\nabla w|^s dx} \right)^{\frac{1}{s}},
\end{align}
for any $B_\rho(y) \subset B_r(y) \subset B_{2R}(x_0)$.
\end{lemma}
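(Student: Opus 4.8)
The plan is to follow the proof of Lemma~\ref{lem:res3.4P} step by step, replacing each interior ingredient by its up-to-the-boundary counterpart; throughout, radii of intermediate balls will be shrunk by fixed harmless factors without further comment. All the substance sits in the first inequality~\eqref{eq:nablaw_boundary}, a boundary energy-decay estimate: once it is available, \eqref{eq:nablaw_boundary-2} for $0<s\le p$ follows by the same mechanism as in Lemma~\ref{lem:res3.4P}, namely by feeding~\eqref{eq:nablaw_boundary} into the boundary reverse H\"older inequality of Lemma~\ref{lem:rev-Holder-boundary}. Here $w$ is tacitly identified with its extension by $0$ across $\partial\Omega\cap B_{2R}$; since $w=u=0$ there, this extension $\bar w$ lies in $W^{1,p}(B_{2R}(x_0))$, solves the homogeneous equation inside $\Omega_{2R}$, and vanishes on $\Omega^c\cap B_{2R}$.

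The key step, and the one I expect to be the main obstacle, is the up-to-the-boundary H\"older continuity of $\bar w$. This is where hypothesis (H1) genuinely enters: because $\Omega^c$ is uniformly $p$-thick, the Poincar\'e--Sobolev inequality holds on every ball $B_{2\tau}(z)\subset B_{2R}(x_0)$ that meets $\Omega^c$, with a constant controlled by $c_0$; inserting this into the Caccioppoli inequality for $w$ and iterating in De Giorgi fashion gives
\begin{align*}
\operatorname{osc}_{B_\tau(z)}\bar w \le C\left(\frac{\tau}{\sigma}\right)^{\beta_0}\operatorname{osc}_{B_\sigma(z)}\bar w,\qquad B_\tau(z)\subset B_\sigma(z)\subset B_{2R}(x_0),
\end{align*}
for some $\beta_0\in(0,1/2]$; for balls lying well inside $\Omega$ this is the interior H\"older estimate already used in Lemma~\ref{lem:res3.4P}, so one takes $\beta_0$ to be the smaller of the two exponents and caps it at $1/2$. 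This boundary regularity under the $p$-capacity density condition is classical and may be quoted, e.g., from~\cite{HKM1993,Phuc2014,MPT2018}; carrying out the De Giorgi iteration anchored on the $p$-thick set from scratch is the only genuinely laborious point.

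Given the oscillation decay, \eqref{eq:nablaw_boundary} follows exactly as in Lemma~\ref{lem:res3.4P}: feeding it back into the Caccioppoli inequality (with subtracted constant $0$ on balls meeting $\partial\Omega$, where $w=0$ keeps the test function admissible, and the midrange of $\bar w$ otherwise) together with the sup-estimate $\operatorname{osc}_{B_{r/2}(y)}\bar w\le Cr\left(\fint_{B_r(y)}|\nabla w|^p\,dx\right)^{1/p}$ (the interior gradient $L^\infty$ bound inside $\Omega$, resp.\ De Giorgi boundedness combined with the $p$-thick Poincar\'e inequality near $\partial\Omega$) yields, for $B_{2\rho}(y)\subset B_r(y)\subset B_{2R}(x_0)$,
\begin{align*}
\fint_{B_\rho(y)}|\nabla w|^p\,dx
\le \frac{C}{\rho^p}\left(\operatorname{osc}_{B_{2\rho}(y)}\bar w\right)^p
\le \frac{C}{\rho^p}\left(\frac{\rho}{r}\right)^{p\beta_0}\left(\operatorname{osc}_{B_{r/2}(y)}\bar w\right)^p
\le C\left(\frac{\rho}{r}\right)^{p(\beta_0-1)}\fint_{B_r(y)}|\nabla w|^p\,dx,
\end{align*}
which is~\eqref{eq:nablaw_boundary} after taking $p$-th roots; the remaining range of $\rho$ close to $r$ is trivial since there $(\rho/r)^{\beta_0-1}\ge1$.

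Finally, for~\eqref{eq:nablaw_boundary-2} with $0<s\le p$ and $B_\rho(y)\subset B_r(y)\subset B_{2R}(x_0)$ (in the range $\rho\le r/2$; larger $\rho$ is immediate from $(\rho/r)^{\beta_0-1}\ge1$ and $\mathcal{L}^n(B_r)/\mathcal{L}^n(B_\rho)\le2^n$), I would chain Jensen's inequality (to raise the exponent from $s$ to $p$), then~\eqref{eq:nablaw_boundary}, then Lemma~\ref{lem:rev-Holder-boundary} in its self-improving form, which is valid for all exponents in $(0,\Theta_0)$ and in particular for the pair $(s,p)$:
\begin{align*}
\left(\fint_{B_\rho(y)}|\nabla w|^s\,dx\right)^{\frac{1}{s}}
&\le \left(\fint_{B_\rho(y)}|\nabla w|^p\,dx\right)^{\frac{1}{p}}
\le C\left(\frac{\rho}{r}\right)^{\beta_0-1}\left(\fint_{B_{r/2}(y)}|\nabla w|^p\,dx\right)^{\frac{1}{p}}\\
&\le C\left(\frac{\rho}{r}\right)^{\beta_0-1}\left(\fint_{B_r(y)}|\nabla w|^s\,dx\right)^{\frac{1}{s}}.
\end{align*}
This reproduces the proof of Lemma~\ref{lem:res3.4P} almost verbatim, the only genuinely new input being the up-to-the-boundary H\"older continuity quoted above.
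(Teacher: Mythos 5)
The paper does not actually prove this lemma: it is stated as known (in line with the boundary decay estimates of \cite{Phuc2014} and \cite{MPT2018}), just as the interior analogue Lemma~\ref{lem:res3.4P} is disposed of by citing \cite[Theorem 7.7]{Giu} plus the reverse H\"older inequality. Your proposal reconstructs exactly the standard argument behind those citations: extend $w$ by zero across $\partial\Omega\cap B_{2R}$, obtain up-to-the-boundary oscillation decay by a De Giorgi iteration anchored on the uniform $p$-thickness of $\Omega^c$ (Maz'ya/capacitary Poincar\'e inequality), convert this into the $L^p$-gradient decay \eqref{eq:nablaw_boundary} via Caccioppoli and a sup bound of the oscillation by $r\bigl(\fint_{B_r}|\nabla w|^p\,dx\bigr)^{1/p}$, and then deduce \eqref{eq:nablaw_boundary-2} from \eqref{eq:nablaw_boundary} and the boundary reverse H\"older inequality of Lemma~\ref{lem:rev-Holder-boundary}, self-improved to small exponents (needed here, since $s$ may lie below $p-1$ in the singular range of $p$). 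This is the correct route and the steps fit together; it is also exactly parallel to how the paper handles the interior case.

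Two caveats. First, your parenthetical justification of the sup-estimate $\operatorname{osc}_{B_{r/2}(y)}\bar w\le Cr\bigl(\fint_{B_r(y)}|\nabla w|^p\,dx\bigr)^{1/p}$ in the interior by an ``interior gradient $L^\infty$ bound'' is not available under \eqref{eq:A1}--\eqref{eq:A2}: the operator is only measurable in $x$, so solutions need not be locally Lipschitz. This is harmless, because the same inequality in the interior follows from the De Giorgi local boundedness of $w-(w)_{B_r}$ combined with the ordinary Poincar\'e inequality, i.e.\ the same mechanism you already invoke near the boundary; the justification, not the estimate, should be replaced. Second, the H\"older exponent and constants coming out of the capacity-anchored De Giorgi iteration necessarily depend on the thickness constant $c_0$, so your argument yields $\beta_0=\beta_0(n,p,\Lambda,c_0)$ (as in Phuc's original lemma) rather than the dependence displayed in the statement; this does not affect any later application, which all tolerate dependence on $c_0$, but it is worth flagging.
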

We now state the selection Lemma~\ref{lem:boundary-res7.2H}, which establishes $L^q$-estimate for gradient of solution $u$ up to the boundary. The proof of such result is very similar and follows the argument of Lemma~\ref{lem:res7.2H}.

\begin{lemma}\label{lem:boundary-res7.2H}
Let $1  < p \le \frac{3n-2}{2n-1}$ and $u$ be a solution to equation~\eqref{eq:main} with  $\mu \in L^{m}(\Omega)$ for some $m \in (m^*, n)$. Let $\beta_0 \in (0,1/2]$ be as in Lemma~\ref{lem:boundary-res3.4P}. Then, for any 
$$m + m(p-1)(1-\beta_0)< \sigma \le n,$$ 
there exists $C = C(n,p,\Lambda,c_0,\sigma)>0$ such that
\begin{align}\label{eq:res7.2H-boundary}
\left(\int_{B_\rho(y)}{|\nabla u|^{q}dx}\right)^{\frac{1}{q}}\leq  C \rho^{\frac{n}{q}-\delta} \left[\mathbf{M}_{\sigma}^{D_0}(|\mu|^m)(y) \right]^{\frac{1}{m(p-1)}},
\end{align}
for all $B_\rho(y) \cap \partial \Omega \neq \emptyset$, where $\delta = \frac{\sigma-m}{m(p-1)}$.
\end{lemma}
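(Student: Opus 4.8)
The plan is to mirror the proof of Lemma~\ref{lem:res7.2H} line by line, replacing the interior comparison machinery with its boundary counterpart. Fix a boundary point $x_0 \in \partial\Omega$, take a ball $B_r(y)$ with $B_r(y) \cap \partial\Omega \neq \emptyset$, and choose $R$ comparable to $r$ so that $B_r(y) \subset \Omega_{2R} = B_{2R}(x_0) \cap \Omega$ with $R < r_0/2$; then let $w \in W^{1,p}_0(\Omega_{2R}) + u$ solve the homogeneous boundary reference problem~\eqref{eq:boundary-w}. First I would invoke Lemma~\ref{lem:boundary-compare} to control $\left(\fint_{B_{r}(y)} |\nabla u - \nabla w|^{q}\,dx\right)^{1/q}$ by $C\mathcal{F}_r(\mu)^{1/(p-1)} + C\mathcal{F}_r(\mu)\left(\fint_{B_{r}(y)}|\nabla u|^q\,dx\right)^{(2-p)/q}$, and Lemma~\ref{lem:boundary-res3.4P} (with $s=q$) to obtain the decay estimate $\left(\fint_{B_\rho(y)}|\nabla w|^q\right)^{1/q} \le C(\rho/r)^{\beta_0-1}\left(\fint_{B_{2r/3}(y)}|\nabla w|^q\right)^{1/q}$, together with the energy bound $\int_{B_{2r/3}(y)}|\nabla w|^q \le C\int_{B_r(y)}|\nabla u|^q$ coming from minimality of $w$.

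Combining these three ingredients via the triangle inequality $\|\nabla u\|_{L^q(B_\rho(y))} \le \|\nabla w\|_{L^q(B_\rho(y))} + \|\nabla u - \nabla w\|_{L^q(B_\rho(y))}$ yields, exactly as in~\eqref{est:rho-1}, an inequality of the form
\begin{align*}
\left(\int_{B_\rho(y)}|\nabla u|^q\,dx\right)^{\frac{1}{q}} &\le C\left(\frac{\rho}{r}\right)^{\frac{n}{q}+\beta_0-1}\left(\int_{B_r(y)}|\nabla u|^q\,dx\right)^{\frac{1}{q}} + C\rho^{\frac{n}{q}}\left(r^m\fint_{B_r(y)}|\mu|^m\,dx\right)^{\frac{1}{m(p-1)}} \\
&\quad + C\rho^{\frac{n(p-1)}{q}}\left(\frac{\rho}{r}\right)^{\frac{n(2-p)}{q}}\left(r^m\fint_{B_r(y)}|\mu|^m\,dx\right)^{\frac{1}{m}}\left(\int_{B_r(y)}|\nabla u|^q\,dx\right)^{\frac{2-p}{q}},
\end{align*}
and Young's inequality absorbs the last term as in~\eqref{eq:btholder}. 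Then, writing $\delta = \frac{\sigma-m}{m(p-1)}$ and $h(\varrho) = \left(\int_{B_\varrho(y)}|\nabla u|^q\,dx\right)^{1/q}$, one rewrites the bound as $h(\rho) \le C[(\rho/r)^{n/q+\beta_0-1}+\varepsilon]h(r) + Cr^{n/q-\delta}(\mathbf{M}_\sigma^{D_0}(|\mu|^m)(y))^{1/(m(p-1))}$. The condition $\sigma > m + m(p-1)(1-\beta_0)$ is precisely what guarantees the exponent inequality $\frac{n}{q}+\beta_0-1 > \frac{n}{q}-\delta \ge 0$ needed to apply Lemma~\ref{lem:hanlin}; invoking it gives $h(\rho) \le C\rho^{n/q-\delta}[D_0^{\delta-n/q}h(D_0) + (\mathbf{M}_\sigma^{D_0}(|\mu|^m)(y))^{1/(m(p-1))}]$, and the global term is dominated by the fractional maximal term via H\"older's inequality and Lemma~\ref{lem:grad-u}, exactly as in~\eqref{eq:2inM}, completing the proof.

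The only genuinely new point compared with the interior case is bookkeeping about the geometry near $\partial\Omega$: one must ensure that all the balls $B_\rho(y) \subset B_r(y)$ appearing in Lemmas~\ref{lem:boundary-compare} and~\ref{lem:boundary-res3.4P} sit inside a single fixed $\Omega_{2R}$ with $R < r_0/2$, so that the uniform $p$-thickness of $\Omega^c$ (through the boundary reverse H\"older inequality, Lemma~\ref{lem:rev-Holder-boundary}, and the boundary H\"older continuity behind Lemma~\ref{lem:boundary-res3.4P}) can be used with constants independent of the particular boundary point. The rescaling and the iteration are otherwise identical; thus I would simply say that the proof follows verbatim that of Lemma~\ref{lem:res7.2H} with Lemmas~\ref{lem:int-compare}, \ref{lem:res3.4P}, \ref{lem:rev-Holder} replaced by their boundary analogues~\ref{lem:boundary-compare}, \ref{lem:boundary-res3.4P}, \ref{lem:rev-Holder-boundary}, and only indicate the choice of $R$ and the covering argument explicitly. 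The main obstacle, if any, is verifying that the constant $\beta_0$ from the boundary H\"older estimate and the constant $\Theta_0$ from the boundary Gehring lemma can be taken uniform over $\partial\Omega$ under the mere $p$-capacity density hypothesis (H1) — but this is already furnished by Lemmas~\ref{lem:rev-Holder-boundary} and~\ref{lem:boundary-res3.4P}, so no additional work is required here.
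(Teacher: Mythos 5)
Your proposal is correct and matches the paper's treatment: the paper itself gives no separate argument for Lemma~\ref{lem:boundary-res7.2H}, stating only that the proof "is very similar and follows the argument of Lemma~\ref{lem:res7.2H}", which is exactly the substitution of Lemmas~\ref{lem:boundary-compare}, \ref{lem:boundary-res3.4P}, \ref{lem:rev-Holder-boundary} for their interior counterparts followed by the same Young/iteration (Lemma~\ref{lem:hanlin}) and global-term step via Lemma~\ref{lem:grad-u} that you describe. The only cosmetic remark is that the bound $\int_{B_{2r/3}(y)}|\nabla w|^q\,dx \le C\int_{B_r(y)}|\nabla u|^q\,dx$ is best justified by the triangle inequality together with the comparison estimate (as in the interior proof) rather than by "minimality", since $q<p$; this does not affect the argument.
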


\section{Global gradient estimates}
\label{sec:LorMorrey-res}

In this section, we first establish several level-set inequalities related to the distribution function of measurable functions (Definition~\ref{def:dist}), which were considered in our previous works such as~\cite{NP-JFA-21,NP-20,PNB-21}. Then the proofs of Theorem~\ref{theo:mainC} and~\ref{theo:mainD} are shown to give the global Lorentz and Lorentz-Morrey gradient estimates, respectively. To do this, we mainly use the level-set inequalities in Theorem~\ref{theo:lambda_estimateA} and~\ref{theo:good-lambda-2B}. It is worth mentioning that these results are connected to our previous ones in~\cite{MPT2018, MPTN2019}, but for $1<p \le \frac{3n-2}{2n-1}$.

\subsection{Level-set inequalities on distribution functions}\label{sec:level-set}

We assume that $u$ is a renormalized solution to~\eqref{eq:main} with data $\mu \in \mathfrak{M}_b(\Omega) \cap L^{m}(\Omega)$ for some $m \in (m^*,m^{**})$ and a given parameter $\frac{n}{2n-1} < q < \frac{nm(p-1)}{n-m}$. Let us recall the notations in~\eqref{def:U-Pi} that we use in this section
\begin{align*}
\mathbb{U}(\zeta) := (\mathbf{M}(|\nabla u|^{q})(\zeta))^{\frac{1}{q}}, \quad \Pi(\zeta) := (\mathbf{M}_{m}(|\mu|^{m})(\zeta))^{\frac{1}{m(p-1)}}, \quad \zeta \in \mathbb{R}^n.
\end{align*} 
The cut-off function related to $\mathbb{U}$ is defined by 
$$\mathbb{U}_{2r}^x(\zeta) := (\mathbf{M}^{2r}(\chi_{B_{2r}(x)}|\nabla u|^{q})(\zeta))^{\frac{1}{q}},  \quad \zeta \in \mathbb{R}^n.$$

\begin{lemma}\label{lem:N1}
For every $a >0$, one can find $\varepsilon_0 = \varepsilon_0(a,n,p,q,\Lambda,D_0)>0$ small enough and $b = b(a,q) \in \mathbb{R}$ such that if provided $\xi_0 \in \Omega$ satisfying $\Pi(\xi_0) \le \varepsilon^{b}\lambda$ then there holds
\begin{equation}\label{est:lem-N1}
d_{\mathbb{U}}(\varepsilon^{-a}\lambda) \leq \varepsilon D_0^n,  \quad \forall \varepsilon \in (0,\varepsilon_0).
\end{equation}
\end{lemma}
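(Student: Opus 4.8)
\textbf{Proof plan for Lemma~\ref{lem:N1}.} The point is that the single pointwise smallness condition $\Pi(\xi_0)\le\varepsilon^{b}\lambda$, although imposed at one point only, already forces $\|\mu\|_{L^m(\Omega)}$ to be small, because $\xi_0\in\Omega$ lets the fractional maximal average $\mathbf{M}_m$ at the large scale $\rho=D_0$ see all of $\mu$. From there the a priori bound of Lemma~\ref{lem:grad-u} makes $\|\nabla u\|_{L^q(\Omega)}$ correspondingly small, and the weak-type $(1,1)$ bound for $\mathbf{M}$ (Lemma~\ref{lem:boundM} with $s=1$, or the weak-$(1,1)$ property recorded in Section~\ref{sec:pre}) converts this into a bound for the super-level set $d_{\mathbb{U}}(\varepsilon^{-a}\lambda)$ carrying an extra power of $\varepsilon$ to spare. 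Choosing $b$ to allocate that spare power and then shrinking $\varepsilon_0$ closes the estimate.

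\emph{Reduction steps.} Since $\xi_0\in\Omega$ and $\mathrm{diam}(\Omega)=D_0$ we have $\Omega\subset B_{D_0}(\xi_0)$, so testing the supremum defining $\mathbf{M}_m$ with the single radius $\rho=D_0$ gives
\[
|B_1|^{-1}D_0^{\,m-n}\int_\Omega |\mu|^m\,dy\ \le\ D_0^{m}\fint_{B_{D_0}(\xi_0)}|\mu|^m\,dy\ \le\ \mathbf{M}_m(|\mu|^m)(\xi_0)=\Pi(\xi_0)^{m(p-1)}\ \le\ (\varepsilon^{b}\lambda)^{m(p-1)},
\]
hence $\|\mu\|_{L^m(\Omega)}\le C(n)\,D_0^{\frac{n-m}{m}}(\varepsilon^{b}\lambda)^{p-1}$. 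Next, Lemma~\ref{lem:grad-u} yields $\|\nabla u\|_{L^{\frac{nm(p-1)}{n-m}}(\Omega)}\le C\|\mu\|_{L^m(\Omega)}^{\frac{1}{p-1}}$, and since $q<\frac{nm(p-1)}{n-m}$, Hölder's inequality over the bounded set $\Omega$ (with $|\Omega|\le C(n)D_0^n$) gives $\|\nabla u\|_{L^q(\Omega)}\le C(n,p,q,m)\,D_0^{\theta}\,\|\mu\|_{L^m(\Omega)}^{\frac{1}{p-1}}$ for a suitable $\theta=\theta(n,q,m,p)\ge0$. Combining the two displays,
\[
\|\nabla u\|_{L^q(\Omega)}\ \le\ C_1\,\varepsilon^{b}\lambda,\qquad C_1=C_1(n,p,q,\Lambda,D_0).
\]

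\emph{Maximal function and choice of parameters.} Viewing $|\nabla u|^q$ as a function on $\mathbb{R}^n$ (zero outside $\Omega$), so that $\||\nabla u|^q\|_{L^1(\mathbb{R}^n)}=\|\nabla u\|_{L^q(\Omega)}^q$, the weak-$(1,1)$ estimate for $\mathbf{M}$ gives
\[
d_{\mathbb{U}}(\varepsilon^{-a}\lambda)\ \le\ \mathcal{L}^n\!\left(\left\{\mathbf{M}(|\nabla u|^q)>(\varepsilon^{-a}\lambda)^q\right\}\right)\ \le\ \frac{C(n)}{(\varepsilon^{-a}\lambda)^q}\,\|\nabla u\|_{L^q(\Omega)}^q\ \le\ C_\star\,\varepsilon^{(a+b)q},
\]
with $C_\star=C_\star(n,p,q,\Lambda,D_0)$. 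Now fix $b=b(a,q)$ with $(a+b)q\ge 2$, e.g.\ $b=\tfrac{2}{q}-a$ (any sign of $b$ is admissible); then for $\varepsilon\in(0,1)$ the right-hand side is at most $C_\star\varepsilon^{2}$, and taking $\varepsilon_0=\varepsilon_0(a,n,p,q,\Lambda,D_0):=\min\{1,\,D_0^n/C_\star\}$ forces $d_{\mathbb{U}}(\varepsilon^{-a}\lambda)\le\varepsilon D_0^n$ for all $\varepsilon\in(0,\varepsilon_0)$, which is \eqref{est:lem-N1}.

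\emph{Main obstacle.} There is no deep difficulty here; the argument is essentially bookkeeping of exponents. The one substantive point is that the strict inequality $q<\frac{nm(p-1)}{n-m}$ is precisely what makes the Hölder embedding of the natural integrability class of $\nabla u$ (from Lemma~\ref{lem:grad-u}) into $L^q(\Omega)$ work, so that the global integrability of $\nabla u$ can be played against the weak-$(1,1)$ bound. The only subtlety to keep in mind is the order of quantifiers: $b$ must depend on $a$ and $q$ alone, so one must fix the exponent $(a+b)q$ first and only afterwards shrink $\varepsilon_0$ to absorb the constant $C_\star$ and the factor $D_0^n$.
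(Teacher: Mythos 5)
Your proof is correct and follows essentially the paper's own route: both arguments use $\Omega\subset B_{D_0}(\xi_0)$ so that $\Pi(\xi_0)\le\varepsilon^b\lambda$ controls $\|\mu\|_{L^m(\Omega)}$, feed this through Lemma~\ref{lem:grad-u} and H\"older's inequality (via $q<\tfrac{nm(p-1)}{n-m}$), and finish with the weak-type $(1,1)$ bound for $\mathbf{M}$, arriving at $d_{\mathbb{U}}(\varepsilon^{-a}\lambda)\le C\varepsilon^{(a+b)q}D_0^n$. The only cosmetic difference is the final bookkeeping: the paper keeps $b$ free subject to $(a+b)q>1$ (which is what lets the specific $b$ later fixed in Lemma~\ref{lem:N3} remain admissible) and absorbs the constant through $\varepsilon^{(a+b)q-1}$, whereas you hard-wire $b=\tfrac{2}{q}-a$ and fold $D_0^n$ into $\varepsilon_0$ --- harmless for the lemma as stated.
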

\begin{proof}
Using the boundedness of the maximal function $\mathbf{M}$ from Lebesgue space $L^1(\mathbb{R}^n)$ into Marcinkiewicz space $L^{1,\infty}(\mathbb{R}^n)$ in Lemma~\ref{lem:boundM} and H{\"o}lder's inequality,  one obtains that
\begin{align}\label{eq:est100}
d_{\mathbb{U}}(\varepsilon^{-a}\lambda) & \le \frac{C}{(\varepsilon^{-a}\lambda)^q} \int_{\Omega} |\nabla u|^{q}dx \le  \frac{C D_0^{n-\frac{q(n-m)}{m(p-1)}}}{(\varepsilon^{-a}\lambda)^q}  \left(\int_{\Omega}|\nabla u|^{\frac{nm(p-1)}{n-m}}\right)^{\frac{q(n-m)}{nm(p-1)}},
\end{align}
with notice that $q < \frac{nm(p-1)}{n-m}$. On the other hand, the gradient bound of $u$ in Lemma~\ref{lem:grad-u} and the existence of $\xi_0 \in \Omega$ such that $\Pi(\xi_0) \le \varepsilon^{b}\lambda$ imply that
\begin{align}\label{eq:bt4}
\|\nabla u\|_{L^{\frac{nm(p-1)}{n-m}}(\Omega)} \le C \|\mu\|_{L^{m}(\Omega)}^{\frac{1}{p-1}} \le C \left[D_0^{\frac{n}{m}-1}(\varepsilon^b\lambda)^{p-1}\right]^{\frac{1}{p-1}}.
\end{align}
It follows from~\eqref{eq:bt4} and~\eqref{eq:est100} that
\begin{align}\label{est:i}
d_{\mathbb{U}}(\varepsilon^{-a}\lambda) &\leq \frac{C }{(\varepsilon^{-a}\lambda)^{q}} D_0^{n-\frac{q(n-m)}{m(p-1)}} \left[ D_0^{\frac{n}{m}-1}(\varepsilon^b\lambda)^{p-1}\right]^{\frac{q}{p-1}} \leq  C\varepsilon^{(a+b)q} D_0^n.
\end{align}
In order to obtain~\eqref{est:lem-N1}, we can take $b$ satisfying $(a+b)q > 1$ and $\varepsilon_0>0$ such that $C\varepsilon_0^{(a+b)q-1}<1$ in~\eqref{est:i}.
\end{proof}

\begin{lemma}\label{lem:N2}
For every $a >0$ and $x \in \Omega$, one can find $\varepsilon_0 = \varepsilon_0(a,n,q)>0$ small enough such that if provided $\xi_1 \in \Omega \cap B_r(x)$ satisfying $\mathbb{U}(\xi_1) \le \lambda$ then there holds
\begin{equation}\label{est:lem-N2}
d_{\mathbb{U}}(B_r(x);\varepsilon^{-a}\lambda) \leq d_{\mathbb{U}_{2r}^x}(B_r(x);\varepsilon^{-a}\lambda), \quad \forall \varepsilon \in (0,\varepsilon_0).
\end{equation}
\end{lemma}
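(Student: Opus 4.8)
The plan is to compare the maximal function $\mathbf{M}(|\nabla u|^q)$ with its localized, truncated version $\mathbf{M}^{2r}(\chi_{B_{2r}(x)}|\nabla u|^q)$ pointwise on $B_r(x)$, and to show that whenever the level $\varepsilon^{-a}\lambda$ is large, the two maximal functions agree at points of $B_r(x)$. First I would fix $\zeta \in B_r(x)\cap\Omega$ and split the supremum defining $\mathbb{U}(\zeta)^q = \mathbf{M}(|\nabla u|^q)(\zeta)$ into averages over balls $B_\rho(\zeta)$ with $\rho < r$ and with $\rho \ge r$. For the small radii $\rho < r$ we have $B_\rho(\zeta)\subset B_{2r}(x)$, so $\fint_{B_\rho(\zeta)}|\nabla u|^q\,dy = \fint_{B_\rho(\zeta)}\chi_{B_{2r}(x)}|\nabla u|^q\,dy$, and this contribution is controlled by $\mathbf{M}^{2r}(\chi_{B_{2r}(x)}|\nabla u|^q)(\zeta) = \mathbb{U}_{2r}^x(\zeta)^q$ (indeed $\rho < r < 2r$ is allowed in the cut-off maximal function).

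For the large radii $\rho \ge r$, I would use the standard doubling trick: $B_\rho(\zeta)\subset B_{2\rho}(\xi_1)$ since $|\zeta - \xi_1| \le 2r \le 2\rho$, hence
\begin{align*}
\fint_{B_\rho(\zeta)}|\nabla u|^q\,dy \le \frac{\mathcal{L}^n(B_{2\rho}(\xi_1))}{\mathcal{L}^n(B_\rho(\zeta))}\fint_{B_{2\rho}(\xi_1)}|\nabla u|^q\,dy = 2^n\,\fint_{B_{2\rho}(\xi_1)}|\nabla u|^q\,dy \le 2^n\,\mathbf{M}(|\nabla u|^q)(\xi_1) = 2^n\,\mathbb{U}(\xi_1)^q \le 2^n\lambda^q.
\end{align*}
Taking the supremum over $\rho \ge r$ in the definition of $\mathbf{M}$ and combining with the previous case gives the pointwise bound $\mathbb{U}(\zeta)^q \le \max\{\mathbb{U}_{2r}^x(\zeta)^q,\ 2^n\lambda^q\}$, i.e. $\mathbb{U}(\zeta) \le \max\{\mathbb{U}_{2r}^x(\zeta),\ 2^{n/q}\lambda\}$, valid for every $\zeta\in B_r(x)\cap\Omega$.

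Now choose $\varepsilon_0 = \varepsilon_0(a,n,q)>0$ so small that $\varepsilon_0^{-a} > 2^{n/q}$, i.e. $2^{n/q}\lambda < \varepsilon^{-a}\lambda$ for all $\varepsilon\in(0,\varepsilon_0)$. Then for such $\varepsilon$, if $\zeta\in B_r(x)\cap\Omega$ satisfies $\mathbb{U}(\zeta) > \varepsilon^{-a}\lambda$, the bound above forces $\mathbb{U}_{2r}^x(\zeta) > \varepsilon^{-a}\lambda$ as well (the other alternative $2^{n/q}\lambda \ge \varepsilon^{-a}\lambda$ is excluded). This shows the set inclusion
\begin{align*}
\{\zeta\in B_r(x)\cap\Omega:\ \mathbb{U}(\zeta) > \varepsilon^{-a}\lambda\} \subseteq \{\zeta\in B_r(x)\cap\Omega:\ \mathbb{U}_{2r}^x(\zeta) > \varepsilon^{-a}\lambda\},
\end{align*}
and taking $\mathcal{L}^n$-measure of both sides yields exactly~\eqref{est:lem-N2}. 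The only mildly delicate point is making sure the radius thresholds line up ($\rho<r$ falls under the cut-off $\mathbf{M}^{2r}$, and the doubling constant in the large-radius case is the harmless $2^n$); there is no real obstacle here, as the hypothesis $\mathbb{U}(\xi_1)\le\lambda$ does all the work of capping the large-scale averages.
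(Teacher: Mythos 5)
Your proposal follows essentially the same route as the paper's proof: split the supremum defining $\mathbf{M}(|\nabla u|^q)$ at a point of $B_r(x)\cap\Omega$ into radii $\rho<r$ (absorbed by $\mathbb{U}_{2r}^x$ since $B_\rho(\zeta)\subset B_{2r}(x)$) and $\rho\ge r$ (capped using $\mathbb{U}(\xi_1)\le\lambda$ after enlarging the ball around $\xi_1$), then pick $\varepsilon_0$ so that $\varepsilon^{-a}\lambda$ exceeds the cap and pass to the level-set inclusion. One small slip in the large-radius case: since $|\zeta-\xi_1|$ can be as large as $2r\le 2\rho$, the correct inclusion is $B_\rho(\zeta)\subset B_{3\rho}(\xi_1)$ rather than $B_{2\rho}(\xi_1)$, so the doubling constant is $3^n$ and the cap is $3^{n/q}\lambda$ (as in the paper), forcing $\varepsilon_0^{-a}>3^{n/q}$ instead of $2^{n/q}$; this changes nothing structural, as $\varepsilon_0$ still depends only on $a,n,q$.
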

\begin{proof}
For every $y \in B_r(x)$ and $\rho \ge r$, we note that 
$$B_{\rho}(y)\subset B_{\rho+r}(x)\subset B_{\rho+2r}(\xi_1)\subset B_{3\rho}(\xi_1),$$
which deduces to
\begin{align*}
\left(\sup_{\rho \ge r} \fint_{B_\rho(y)}|\nabla u|^{q}dx \right)^{\frac{1}{q}}  \le 3^{\frac{n}{q}} \left(\sup_{\rho \ge 3r} \fint_{B_\rho(\xi_1)}|\nabla u|^{q}dx \right)^{\frac{1}{q}}  \le 3^{\frac{n}{q}} \mathbb{U}(\xi_1) \le 3^{\frac{n}{q}} \lambda.
\end{align*}
Here the last inequality comes from the assumption $\mathbb{U}(\xi_1) \le \lambda$. Thus,
\begin{align*}
\mathbb{U}(y) & \le \max \left\{ \left( \sup_{0<\rho<r} \fint_{B_\rho(y)} \chi_{B_{2r}(x)}|\nabla u|^{q}dx\right)^{\frac{1}{q}} ; \ \left(\sup_{\rho \ge r} \fint_{B_\rho(y)}|\nabla u|^{q}dx \right)^{\frac{1}{q}}  \right\} \\
& \le \max \left\{\mathbb{U}_{2r}^x(y); \ 3^{\frac{n}{q}} \lambda \right\}, \quad \mbox{ for all } \ y\in B_r(x),
\end{align*}
which implies that $\{\mathbb{U}>3^{\frac{n}{q}}\lambda\} \cap B_r(x) = \emptyset$. For this reason, by choosing $\varepsilon_0 \in (0,1)$ such that $\varepsilon_0^{-a}>3^{\frac{n}{q}}$, we will get that
\begin{align}\notag
\left\{\mathbb{U}>\varepsilon^{-a}\lambda\right\} \cap B_r(x) \subset \left\{\mathbb{U}_{2r}^x>\varepsilon^{-a}\lambda\right\} \cap B_r(x), \quad \forall \varepsilon \in (0,\varepsilon_0),
\end{align}
which allows us to conclude~\eqref{est:lem-N2}.
\end{proof}

\begin{lemma}\label{lem:N3}
For every $\varepsilon \in (0,1)$, one can find constants $a = a(n,p,\Lambda)\in(0,1)$ and $b = b(a,q) \in \mathbb{R}$ such that if provided $\xi_2$, $\xi_3\in B_r(x) \cap \Omega$ satisfying
\begin{align}\label{eq:bt7}
\mathbb{U}(\xi_2) \leq \lambda \ \mbox{ and } \ \Pi(\xi_3) \le \varepsilon^{b}\lambda,
\end{align}	 	 
then there holds
\begin{equation}\label{est:lem-N3}
d_{\mathbb{U}_{2r}^x}(B_r(x);\varepsilon^{-a}\lambda) \le C \varepsilon r^n.
\end{equation}
\end{lemma}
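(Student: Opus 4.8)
The plan is to reduce the bound on $d_{\mathbb{U}_{2r}^x}(B_r(x);\varepsilon^{-a}\lambda)$ to a comparison between $\nabla u$ and the gradient of the solution $w$ of the homogeneous problem on the appropriate reference ball, and then to exploit the higher integrability of $\nabla w$ (reverse H\"older, Lemma~\ref{lem:rev-Holder} or its boundary counterpart Lemma~\ref{lem:rev-Holder-boundary}) together with the interior/boundary comparison estimates (Lemma~\ref{lem:int-compare}, Lemma~\ref{lem:boundary-compare}). First I would observe that the truncated maximal function $\mathbb{U}_{2r}^x$ only ``sees'' the behaviour of $|\nabla u|^q$ inside $B_{2r}(x)$; splitting into the interior case (where $B_{4r}(x)\subset\Omega$, say) and the boundary case (where $B_{4r}(x)\cap\partial\Omega\neq\emptyset$), in each case let $w$ solve the homogeneous equation on $B_{2r}(x)$ (resp.\ on $\Omega_{4R}$ with an appropriate enlargement), and write $\nabla u=\nabla w+(\nabla u-\nabla w)$. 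Then pointwise
\[
\mathbb{U}_{2r}^x(y)\le \left(\mathbf{M}^{2r}(\chi_{B_{2r}(x)}|\nabla w|^{q})(y)\right)^{\frac1q}+\left(\mathbf{M}^{2r}(\chi_{B_{2r}(x)}|\nabla u-\nabla w|^{q})(y)\right)^{\frac1q},
\]
so that $\{\mathbb{U}_{2r}^x>\varepsilon^{-a}\lambda\}\cap B_r(x)$ is contained in the union of the two level sets at level $\tfrac12\varepsilon^{-a}\lambda$ for the two pieces.

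For the $\nabla w$ piece I would use the weak-type $(1,1)$ bound for $\mathbf{M}$ (Lemma~\ref{lem:boundM} with $s=1$ is enough, or the $L^{\Theta_0/q}$ boundedness for a stronger power gain) to get
\[
\mathcal L^n\!\left(\{(\mathbf{M}^{2r}(\chi_{B_{2r}(x)}|\nabla w|^{q}))^{1/q}>\tfrac12\varepsilon^{-a}\lambda\}\cap B_r(x)\right)\le \frac{C\varepsilon^{a\Theta_0/q}}{\lambda^{\Theta_0/q}}\int_{B_{2r}(x)}|\nabla w|^{\Theta_0/q\cdot q}\,\frac{dx}{\cdots},
\]
more precisely I would use that $\mathbf{M}$ maps $L^{s}$ to weak-$L^{s}$ with $s=\Theta_0/q>1$ (here $\Theta_0>p>q$ after possibly shrinking $q$; in the regime of the lemma one checks $q<\Theta_0$), reverse H\"older to replace $\fint|\nabla w|^{\Theta_0}$ by a power of $\fint_{B_{4r}(x)}|\nabla w|^{p-1}$, and finally the energy comparison $\int_{B_{2r}(x)}|\nabla w|^{p-1}\le C\int_{B_{4r}(x)}|\nabla u|^{p-1}$ together with the hypothesis $\mathbb{U}(\xi_2)\le\lambda$ (which controls $\fint_{B_{\rho}(\xi_2)}|\nabla u|^{q}\le\lambda^q$ on all balls containing $B_{2r}(x)$, hence $\fint_{B_{4r}(x)}|\nabla u|^{q}\le C\lambda^q$ and by H\"older $\fint_{B_{4r}(x)}|\nabla u|^{p-1}\le C\lambda^{p-1}$). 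This yields a bound of the form $C\varepsilon^{a\Theta_0/q-n/q\cdot(\text{stuff})}\cdot r^n\cdot(\text{powers of }\lambda\text{ cancelling})$, and choosing $a$ small enough depending only on $n,p,\Lambda$ (through $\Theta_0$) makes the $\varepsilon$-exponent at least $1$, giving $\le C\varepsilon r^n$.

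For the $\nabla u-\nabla w$ piece I would again use the weak-type $(1,1)$ bound for $\mathbf{M}$ to reduce to estimating $\frac{1}{(\varepsilon^{-a}\lambda)^q}\int_{B_{2r}(x)}|\nabla u-\nabla w|^{q}\,dx$, then invoke the comparison estimate (Lemma~\ref{lem:int-compare} interior, Lemma~\ref{lem:boundary-compare} boundary) to bound $\fint_{B_{2r}(x)}|\nabla u-\nabla w|^q$ by $C\mathcal F_{2r}(\mu)^{q/(p-1)}+C\mathcal F_{2r}(\mu)^q(\fint_{B_{2r}(x)}|\nabla u|^q)^{q(2-p)/q}$. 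Here $\mathcal F_{2r}(\mu)=(( 2r)^{m}\fint_{B_{2r}(x)}|\mu|^m)^{1/m}\le C r^{n/m}(\mathbf M_m(|\mu|^m)(\xi_3))^{1/m}\le C r^{n/m}\Pi(\xi_3)^{(p-1)/m}\cdot r^{-n/m}\cdots$ — more carefully, $\mathcal F_{2r}(\mu)\le C (\mathbf M_m(|\mu|^m)(\xi_3))^{1/m}=C\Pi(\xi_3)^{(p-1)}\le C(\varepsilon^b\lambda)^{p-1}$ by the second hypothesis in~\eqref{eq:bt7}, while $\fint_{B_{2r}(x)}|\nabla u|^q\le C\lambda^q$ by the first hypothesis as above; substituting gives $\fint_{B_{2r}(x)}|\nabla u-\nabla w|^q\le C\varepsilon^{bq}\lambda^q$, hence the level set has measure $\le C\varepsilon^{(a+b)q}\,r^n\,(\cdots)$, and choosing $b=b(a,q)$ so that $(a+b)q\ge 1$ (this is exactly the dependence claimed) finishes this piece. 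The main obstacle I anticipate is bookkeeping the exponents so that both pieces produce the \emph{same} power $a$ and a single $b$: the first piece forces $a$ small in terms of $n,p,\Lambda$ via $\Theta_0$, while the second only needs $b$ large relative to $a$; one must also verify $q<\Theta_0$ so that the reverse-H\"older/maximal-function argument on the $\nabla w$ term is legitimate, and handle the boundary balls using the uniform $p$-thickness to justify the boundary reverse H\"older and the energy comparison — but all the required ingredients are already in Section~\ref{sec:prelem}.
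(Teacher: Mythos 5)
Your overall architecture matches the paper's: split into the interior case $B_{4r}(x)\subset\subset\Omega$ and the boundary case, decompose the level set of $\mathbb{U}_{2r}^x$ into a piece controlled by $\nabla w$ (handled by the reverse H\"older inequality plus the $L^{\Theta_0/q}$-to-weak-$L^{\Theta_0/q}$ bound for $\mathbf{M}$) and a piece controlled by $\nabla u-\nabla w$ (handled by the weak $(1,1)$ bound and the comparison estimates), then convert the hypotheses $\mathbb{U}(\xi_2)\le\lambda$, $\Pi(\xi_3)\le\varepsilon^b\lambda$ into bounds $\fint|\nabla u|^q\le C\lambda^q$ and $\mathcal F(\mu)\le C(\varepsilon^b\lambda)^{p-1}$, and finally choose $a$ via $\Theta_0$ and $b$ to force the $\varepsilon$-exponents above $1$. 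However, there is a genuine gap: you apply Lemma~\ref{lem:int-compare} (resp.\ Lemma~\ref{lem:boundary-compare}) directly with the renormalized solution $u$, defining $w$ as the solution of the homogeneous problem with boundary datum $u$. In the very singular range $1<p\le\frac{3n-2}{2n-1}$ with $\mu\in L^m$, $m<m^{**}$, one only has $\nabla u\in L^{\frac{nm(p-1)}{n-m}}$ and $\frac{nm(p-1)}{n-m}<p$, so $u\notin W^{1,p}_{\mathrm{loc}}(\Omega)$ in general; the Dirichlet problem $w\in W^{1,p}_0(B)+u$ is then not well-posed and the comparison lemmas, which are stated for $u\in W^{1,p}_{\mathrm{loc}}(\Omega)$, do not apply as you use them. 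The paper circumvents exactly this obstruction by an approximation step that your plan omits: it solves the problem with truncated data $\mu_k=T_k(\mu)$, obtaining genuine $W^{1,p}_0(\Omega)$ solutions $u_k$, compares $u_k$ with the homogeneous solutions $w_k$, uses a \emph{three}-term decomposition of the level set (including the extra term $\mathbf{M}^{2r}(\chi_{B_{2r}(x)}|\nabla(u-u_k)|^q)$), and then lets $k\to\infty$ using the convergence Lemma~\ref{lem:conver-grad-u} so that this extra term disappears. Without this step your two-term decomposition has no rigorous starting point.

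Two smaller points of bookkeeping. First, the second term of the comparison estimate contributes $\mathcal F(\mu)^q\bigl(\fint|\nabla u|^q\bigr)^{2-p}\sim\varepsilon^{b(p-1)q}\lambda^q$, not $\varepsilon^{bq}\lambda^q$ as you wrote (for $b>0$ and $p-1<1$ this is the larger term), so the correct constraint is $(a+b(p-1))q\ge 1$, which is the paper's choice $a\Theta_0=1$, $(a+b(p-1))q=1$; this also automatically yields $(a+b)q>1$ as needed in Lemma~\ref{lem:N1}. Second, if you pose the homogeneous problem on $B_{2r}(x)$ you cannot invoke the reverse H\"older inequality on $B_{2r}(x)$ itself (it needs the doubled ball inside the reference domain); the reference ball must be $B_{4r}(x)$ in the interior case, and $B_{8r}(\xi_4)$ around a boundary point $\xi_4$ in the boundary case, which is also why the hypotheses at $\xi_2,\xi_3$ are used on the enlarged balls $B_{5r}(\xi_2)$, $B_{13r}(\xi_2)$, etc. These are fixable, but the missing approximation/limit argument is the essential omission.
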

\begin{proof}
Let $u_k\in W_0^{1,p}(\Omega)$ be the unique solution to the following homogeneous problem:
\begin{equation}\label{eq:u_k-sol}
\begin{cases} -\mathrm{div}(A(x,\nabla u_k)) & =  \mu_k   \quad \ \text{in} \ \Omega,\\ \hspace{2cm} {u}_{k} & =  0 \qquad   \text{on} \ \partial\Omega, \end{cases}
\end{equation}
where $\mu_k = T_k(\mu)$.  In order to prove~\eqref{est:lem-N3}, for the sake of clarity we will  consider two cases: $B_{4r}(x)\subset\subset \Omega$ and $B_{4r}(x)\cap \Omega^{c}\not=\emptyset$.\\

Let us consider the first case $B_{4r}(x)\subset\subset\Omega$. Applying Lemma~\ref{lem:int-compare} for $w_k$ being the unique solution to:
\begin{equation}\notag
\begin{cases} -\mathrm{div}(A(x,\nabla w_k)) &=0, \quad \ \, \text{in}\ \ B_{4r}(x), \\ \hspace{2cm} w_k &= u_k, \quad \text{on} \ \ \partial B_{4r}(x),\end{cases}
\end{equation}
with $\mu = \mu_k$ and $B_{R} = B_{4r}(x)$, one has a constant $C = C(n,p,\Lambda,q,m)>0$ such that:
\begin{align}\label{eq:btgeneral}
\left(\fint_{B_{4r}(x)}{|\nabla u_k - \nabla w_k|^{q}dx} \right)^{\frac{1}{q}} & \le C\left[\mathcal{F}_{4r}(\mu_k)\right]^{\frac{1}{p-1}} + C \mathcal{F}_{4r}(\mu_k) \left(\fint_{B_{4r}(x)}{|\nabla u_k|^{q}dx} \right)^{\frac{2-p}{q}},
\end{align}
where the function $\mathcal{F}_{4r}$ is defined by
$$\mathcal{F}_{4r}(\mu_k) = \left((4r)^{m}\fint_{B_{4r}(x)} |\mu_k|^{m}dx \right)^{\frac{1}{m}}.$$
Moreover, applying the reverse H{\"o}lder's inequality in Lemma~\ref{lem:rev-Holder}, there exists a constant $\Theta_0 >p$ such that
\begin{align}\nonumber
\left(\fint_{B_{2r}(x)}|\nabla w_k|^{\Theta_0} dx\right)^{\frac{1}{\Theta_0}}  &\leq C \left(\fint_{B_{4r}(x)}|\nabla w_k|^{p-1} dx\right)^{\frac{1}{p-1}} \\  \label{eq:bt17}
&\le C \left(\fint_{B_{4r}(x)}{|\nabla u_k|^{q}dx} \right)^{\frac{1}{q}}+  C\left( \fint_{B_{4r}(x)}{|\nabla u_k - \nabla w_k|^{q}dx} \right)^{\frac{1}{q}},
\end{align}
where the last inequality comes from the H{\"o}lder's inequality with notice that $q>p-1$.

On the other hand, it easy to see that the distribution function given in~\eqref{est:lem-N3} can be decomposed as 
\begin{align}\nonumber
d_{\mathbb{U}_{2r}^x}(B_r(x);\varepsilon^{-a}\lambda) & \le \mathcal{L}^n\left(\left\{\mathbf{M}^{2r}\left(\chi_{B_{2r}(x)}|\nabla (u-u_k)|^{q}\right)^{\frac{1}{q}}>3^{-\frac{1}{q}}\varepsilon^{- a}\lambda\right\}\cap B_r(x)\right) \\ \nonumber
& \quad +   \mathcal{L}^n\left(\left\{\mathbf{M}^{2r}\left(\chi_{B_{2r}(x)}|\nabla (u_k-w_k)|^{q}\right)^{\frac{1}{q}}>3^{-\frac{1}{q}}\varepsilon^{- a}\lambda\right\}\cap B_r(x)\right)  \\  \label{eq:bt11}	
& \qquad  +  \mathcal{L}^n\left(\left\{\mathbf{M}^{2r}\left(\chi_{B_{2r}(x)}|\nabla w_k|^{q}\right)^{\frac{1}{q}}>3^{-\frac{1}{q}}\varepsilon^{- a}\lambda\right\}\cap B_r(x)\right).  
\end{align}
Applying Lemma~\ref{lem:boundM} with $s = 1$ and $s = \frac{\Theta_0}{q}>1$ for three terms on the right hand side of~\eqref{eq:bt11} respectively, one obtains that
\begin{align}\nonumber
d_{\mathbb{U}_{2r}^x}(B_r(x);\varepsilon^{-a}\lambda) &\le \frac{Cr^n}{(\varepsilon^{-a}\lambda)^{q}} \fint_{B_{2r}(x)}{|\nabla u - \nabla u_k|^{q}dx}  \\ \nonumber
& \qquad + \frac{Cr^n}{(\varepsilon^{-a}\lambda)^{q}}   \fint_{B_{2r}(x)}{|\nabla u_k - \nabla w_k|^{q}dx} \\ \label{eq:bt12}   
& \qquad \qquad + \frac{Cr^n}{(\varepsilon^{-a}\lambda)^{\Theta_0}}\fint_{B_{2r}(x)}{|\nabla w_k|^{\Theta_0}dx}.
\end{align}
Substituting both estimates \eqref{eq:btgeneral} and \eqref{eq:bt17} into~\eqref{eq:bt12} we get
\begin{align*}
d_{\mathbb{U}_{2r}^x}(B_r(x);\varepsilon^{-a}\lambda) &\le  \frac{Cr^n}{(\varepsilon^{-a}\lambda)^{q}} \fint_{B_{4r}(x)}{|\nabla u - \nabla u_k|^{q}dx} \\
& \quad + \frac{Cr^n}{(\varepsilon^{-a}\lambda)^{q}} \left[ \mathcal{F}_{4r}(\mu_k)^{\frac{1}{p-1}}+ \mathcal{F}_{4r}(\mu_k) \left(\fint_{B_{4r}(x)}{|\nabla u_k|^{q}dx} \right)^{\frac{2-p}{q}} \right]^{q} \\ 
& \qquad + \frac{Cr^n}{(\varepsilon^{-a}\lambda)^{\Theta_0}} \left[\left(\fint_{B_{4r}(x)}{|\nabla u_k|^{q}dx} \right)^{\frac{1}{q}} +  \mathcal{F}_{4r}(\mu_k)^{\frac{1}{p-1}} \right. \\ & 
\left. \qquad \quad + \mathcal{F}_{4r}(\mu_k) \left(\fint_{B_{4r}(x)}{|\nabla u_k|^{q}dx} \right)^{\frac{2-p}{q}} \right]^{\Theta}.
\end{align*}
Passing $k \to \infty$ and applying Lemma~\ref{lem:conver-grad-u} the above inequality becomes
\begin{align}\nonumber
d_{\mathbb{U}_{2r}^x}(B_r(x);\varepsilon^{-a}\lambda) &\le  \frac{Cr^n}{(\varepsilon^{-a}\lambda)^{q}} \left[ \mathcal{F}_{4r}(\mu)^{\frac{1}{p-1}}+ \mathcal{F}_{4r}(\mu) \left(\fint_{B_{4r}(x)}{|\nabla u|^{q}dx} \right)^{\frac{2-p}{q}} \right]^{q} \\ \nonumber
& \quad + \frac{Cr^n}{(\varepsilon^{-a}\lambda)^{\Theta_0}} \left[\left(\fint_{B_{4r}(x)}{|\nabla u|^{q}dx} \right)^{\frac{1}{q}} +  \mathcal{F}_{4r}(\mu)^{\frac{1}{p-1}} \right. \\ \label{eq:bt20} & \left. \qquad + \mathcal{F}_{4r}(\mu) \left(\fint_{B_{4r}(x)}{|\nabla u|^{q}dx} \right)^{\frac{2-p}{q}} \right]^{\Theta}.
\end{align}	
Since $|x-\xi_2|<r$, it follows  $B_{4r}(x)\subset B_{5r}(\xi_2)$. Thus one obtains from assumption~\eqref{eq:bt7} that
\begin{align}\label{eq:btp1}
\left(\fint_{B_{4r}(x)}|\nabla u|^{q}dx\right)^{\frac{1}{q}} & \leq \left( \left(\frac{5}{4}\right)^n 	\fint_{B_{5r}(\xi_2)}|\nabla u|^{q}dx \right)^{\frac{1}{q}} \le C \mathbb{U}(\xi_2) \le C \lambda.
\end{align}
Similarly, from $|x-\xi_3|<r$, we have $B_{4r}(x)\subset B_{5r}(\xi_3)$ which gives us
\begin{align}\label{eq:btp2}
\mathcal{F}_{4r}(\mu) & \le  C\left(\frac{1}{(5r)^{n-m}}\int_{B_{5r}(\xi_3)}|\mu|^{m}dx\right)^{\frac{1}{m}} \le C \Pi(\xi_3) \le C (\varepsilon^{b}\lambda)^{p-1}.
\end{align}
Substituting~\eqref{eq:btp1} and~\eqref{eq:btp2} into~\eqref{eq:bt20}, we may conclude that
\begin{align}\nonumber
d_{\mathbb{U}_{2r}^x}(B_r(x);\varepsilon^{-a}\lambda) &\le  \frac{Cr^n}{(\varepsilon^{-a}\lambda)^{q}} \left[ \varepsilon^{b}\lambda + (\varepsilon^{b}\lambda)^{p-1} \lambda^{2-p} \right]^{q} \\ \nonumber
& \qquad \qquad  + \frac{Cr^n}{(\varepsilon^{-a}\lambda)^{\Theta_0}} \left[ \lambda +  \varepsilon^{b}\lambda + (\varepsilon^{b}\lambda)^{p-1} \lambda^{2-p}  \right]^{\Theta_0} \\ \label{eq:bt210}
 & \le C r^n  \left(\varepsilon^{q(a+b)} + \varepsilon^{q(a+b(p-1))}\right)  + C r^n \varepsilon^{a\Theta_0}. 
\end{align}		
We are reduced to proving~\eqref{est:lem-N3} by choosing $a$, $b$ in~\eqref{eq:bt210} such that $a\Theta_0=1$ and $(a+b(p-1))q=1$. Note that with this choice, we also have $a\in (0,1)$ and $(a+b)q>1$ which is assumed at~\eqref{est:i} in Lemma~\ref{lem:N2}.\\

We next consider the second case $B_{4r}(x) \cap \Omega^c \neq \emptyset$. Let $\xi_4 \in \partial\Omega$ such that 
$$|\xi_4-x|=\text{dist}(x,\partial\Omega)\leq 4r.$$ 
It is easy to see that $B_{4r}(x) \subset B_{8r}(\xi_4)$. Applying Lemma \ref{lem:boundary-compare} with $v_k$ being the solution to:
\begin{equation}\notag
\begin{cases} -\mathrm{div}(A(x,\nabla v_k) &=0, \quad \ \, \text{in}\ \ B_{8r}(\xi_4),\\
\hspace{2cm} v_k &= u_k, \quad \text{on} \ \ \partial B_{8r}(\xi_4), \end{cases}
\end{equation}
for $\mu = \mu_k$ and $B_{2R} = B_{8r}(\xi_4)$ and $u_k \in W^{1,p}_{0}(\Omega)$ being the solution to~\eqref{eq:u_k-sol}, one has a constant $C = C(n,p,\Lambda)>0$ such that:
\begin{align} \label{eq:estbound1-2}
\left( \fint_{B_{8r}(\xi_4)}{|\nabla u_k - \nabla v_k|^{q}dx} \right)^{\frac{1}{q}} &\le C\left[\tilde{\mathcal{F}}_{8r}(\mu_k) \right]^{\frac{1}{p-1}} +  C  \tilde{\mathcal{F}}_{8r}(\mu_k) \left(\fint_{B_{8r}(\xi_4)}{|\nabla u_k|^{q}dx} \right)^{\frac{2-p}{q}},
\end{align}
where the function $\tilde{\mathcal{F}}_{8r}$ is defined by
$$\tilde{\mathcal{F}}_{8r}(\mu_k) = \left((8r)^{m}\fint_{B_{8r}(\xi_4)} |\mu_k|^{m}dx \right)^{\frac{1}{m}}.$$ 
Moreover, following the reverse H{\"o}lder's inequality in Lemma~\ref{lem:boundary-compare} with $\rho = 4r$ and notice that $B_{4r}(x)\subset B_{8r}(\xi_4)$, one has
\begin{align*}
\left(\fint_{B_{2r}(x)}|\nabla v_k|^{\Theta_0} dx\right)^{\frac{1}{\Theta_0}} &\leq C \left(\fint_{B_{4r}(x)}|\nabla v_k|^{p-1} dx\right)^{\frac{1}{p-1}}  \le C \left(\fint_{B_{8r}(\xi_4)}|\nabla v_k|^{p-1} dx\right)^{\frac{1}{p-1}},
\end{align*}
which follows from H{\"o}lder's inequality that
\begin{align}\nonumber
\left(\fint_{B_{2r}(x)}|\nabla v_k|^{\Theta_0} dx\right)^{\frac{1}{\Theta_0}} &  \le C \left(\fint_{B_{8r}(\xi_4)}|\nabla v_k|^{q} dx\right)^{\frac{1}{q}} \\ \label{eq:estbound2-2}
& \le  C \left(\fint_{B_{8r}(\xi_4)}|\nabla u_k|^{q} dx\right)^{\frac{1}{q}} + C \left(\fint_{B_{8r}(\xi_4)}|\nabla u_k - \nabla v_k|^{q} dx\right)^{\frac{1}{q}}.
\end{align}
As the proof in the first case, we first obtain the following estimate 
\begin{align}\nonumber
d_{\mathbb{U}_{2r}^x}(B_r(x);\varepsilon^{-a}\lambda) & \le \mathcal{L}^n\left(\left\{\mathbf{M}^{2r}\left(\chi_{B_{2r}(x)}|\nabla (u-u_k)|^{q}\right)^{\frac{1}{q}}>3^{-\frac{1}{q}}\varepsilon^{- a}\lambda\right\}\cap B_r(x)\right) \\ \nonumber
& \qquad +   \mathcal{L}^n\left(\left\{\mathbf{M}^{2r}\left(\chi_{B_{2r}(x)}|\nabla (u_k-v_k)|^{q}\right)^{\frac{1}{q}}>3^{-\frac{1}{q}}\varepsilon^{- a}\lambda\right\}\cap B_r(x)\right)  \\  \nonumber
& \qquad \qquad +\mathcal{L}^n\left(\left\{\mathbf{M}^{2r}\left(\chi_{B_{2r}(x)}|\nabla v_k|^{q}\right)^{\frac{1}{q}}>3^{-\frac{1}{q}}\varepsilon^{- a}\lambda\right\}\cap B_r(x)\right),
\end{align}
and then applying Lemma~\ref{lem:boundM} to yield that
\begin{align}\nonumber
d_{\mathbb{U}_{2r}^x}(B_r(x);\varepsilon^{-a}\lambda) &\le \frac{Cr^n}{(\varepsilon^{-a}\lambda)^{q}}\left[ \fint_{B_{2r}(x)}{|\nabla u - \nabla u_k|^{q}dx} + \fint_{B_{2r}(x)}{|\nabla u_k - \nabla v_k|^{q}dx} \right]  \\ \nonumber  
& \qquad + \frac{Cr^n}{(\varepsilon^{-a}\lambda)^{\Theta_0}}\fint_{B_{2r}(x)}{|\nabla v_k|^{\Theta_0}dx} \\ \nonumber
& \le \frac{Cr^n}{(\varepsilon^{-a}\lambda)^{q}}\left[ \fint_{B_{8r}(\xi_4)}{|\nabla u - \nabla u_k|^{q}dx} + \fint_{B_{8r}(\xi_4)}{|\nabla u_k - \nabla v_k|^{q}dx} \right]  \\ \label{eq:bt12-2}   
& \qquad + \frac{Cr^n}{(\varepsilon^{-a}\lambda)^{\Theta_0}}\fint_{B_{2r}(x)}{|\nabla v_k|^{\Theta_0}dx}.
\end{align}
Taking into account~\eqref{eq:estbound1-2} and~\eqref{eq:estbound2-2} to~\eqref{eq:bt12-2}, there holds  
\begin{align*}
d_{\mathbb{U}_{2r}^x}(B_r(x);\varepsilon^{-a}\lambda) &\le  \frac{Cr^n}{(\varepsilon^{-a}\lambda)^{q}} \fint_{B_{8r}(\xi_4)}{|\nabla u - \nabla u_k|^{q}dx} \\
& \quad + \frac{Cr^n}{(\varepsilon^{-a}\lambda)^{q}} \left[ \tilde{\mathcal{F}}_{8r}(\mu_k)^{\frac{1}{p-1}}+ \tilde{\mathcal{F}}_{8r}(\mu_k) \left(\fint_{B_{8r}(\xi_4)}{|\nabla u_k|^{q}dx} \right)^{\frac{2-p}{q}} \right]^{q} \\ 
& \qquad + \frac{Cr^n}{(\varepsilon^{-a}\lambda)^{\Theta_0}} \left[\left(\fint_{B_{8r}(\xi_4)}{|\nabla u_k|^{q}dx} \right)^{\frac{1}{q}} +  \tilde{\mathcal{F}}_{8r}(\mu_k)^{\frac{1}{p-1}} \right.  \\ & \left.  \qquad \quad + \tilde{\mathcal{F}}_{8r}(\mu_k) \left(\fint_{B_{8r}(\xi_4)}{|\nabla u_k|^{q}dx} \right)^{\frac{2-p}{q}} \right]^{\Theta_0}.
\end{align*}
Sending $k \to \infty$ and using Lemma~\ref{lem:conver-grad-u}, the above inequality becomes
\begin{align}\nonumber
d_{\mathbb{U}_{2r}^x}(B_r(x);\varepsilon^{-a}\lambda) &\le  \frac{Cr^n}{(\varepsilon^{-a}\lambda)^{q}} \left[ \tilde{\mathcal{F}}_{8r}(\mu)^{\frac{1}{p-1}}+ \tilde{\mathcal{F}}_{8r}(\mu) \left(\fint_{B_{8r}(\xi_4)}{|\nabla u|^{q}dx} \right)^{\frac{2-p}{q}} \right]^{q} \\ \nonumber
& \quad + \frac{Cr^n}{(\varepsilon^{-a}\lambda)^{\Theta_0}} \left[\left(\fint_{B_{8r}(\xi_4)}{|\nabla u|^{q}dx} \right)^{\frac{1}{q}} + \tilde{\mathcal{F}}_{8r}(\mu)^{\frac{1}{p-1}} \right.  \\ \label{eq:bt20-2} 
& \left.  \qquad + \tilde{\mathcal{F}}_{8r}(\mu) \left(\fint_{B_{8r}(\xi_4)}{|\nabla u|^{q}dx} \right)^{\frac{2-p}{q}} \right]^{\Theta_0}.
\end{align}
It is similar to the previous case, one estimates all terms on the right-hand side of~\eqref{eq:bt20-2} by using assumption~\eqref{eq:bt7} with the fact that 
$$B_{8r}(\xi_4) \subset B_{13r}(\xi_2) \cap B_{13r}(\xi_3). $$
With this notice, from~\eqref{eq:bt7} one obtains that
\begin{align}\label{eq:btp1-2}
\left(\fint_{B_{8r}(\xi_4)}|\nabla u|^{q}dx\right)^{\frac{1}{q}} & \leq C \left(	\fint_{B_{13r}(\xi_2)}|\nabla u|^{q}dx \right)^{\frac{1}{q}}  \le C \mathbb{U}(\xi_2) \le C \lambda,
\end{align}
and
\begin{align}\label{eq:btp2-2}
\tilde{\mathcal{F}}_{8r}(\mu) & \le  C\left({(13r)^{m}\fint_{B_{13r}(\xi_3)}|\mu|^{m}dx}\right)^{\frac{1}{m}} \le C \Pi(\xi_3) \le C (\varepsilon^{b}\lambda)^{p-1}.
\end{align}
Substituting~\eqref{eq:btp1-2} and~\eqref{eq:btp2-2} into~\eqref{eq:bt20-2}, we may conclude that
\begin{align}\notag
d_{\mathbb{U}_{2r}^x}(B_r(x);\varepsilon^{-a}\lambda) \le C r^n  \left(\varepsilon^{(a+b)q} + \varepsilon^{(a+b(p-1))q}\right)  + C r^n \varepsilon^{a\Theta_0}, 
\end{align}		
which guarantees~\eqref{est:lem-N3} by the same value of $a$, $b$ as in the first case. This ends the proof of Lemma~\ref{lem:N3}. 
\end{proof}

In order to obtain the level-set inequalities in~\eqref{eq:main-dist} and~\eqref{eq:main-dist-2}, the main idea is to use the following lemma which is well-known as a version of the Calder\'on-Zygmund-Krylov-Safonov decomposition, where its proof can be found in~\cite{CC1995}.
\begin{lemma}\label{lem:mainlem}
Let $\varepsilon \in (0,1)$, $R_1>0$ and two measurable sets $V\subset W\subset \Omega$ satisfying the following properties:
\begin{enumerate}
\item[i)] $\mathcal{L}^n(V)<\varepsilon \mathcal{L}^n(B_{R_1})$;
\item[ii)] for all $x\in \Omega$ and $r\in (0,R_1]$, if $\mathcal{L}^n(V\cap B_r(x)) \geq \varepsilon \mathcal{L}^n(B_r(x))$ then $B_r(x)\cap \Omega\subset W$.
\end{enumerate} 	
Then there exists a positive constant $C$ depending on $n$ such that $\mathcal{L}^n(V) \leq C\varepsilon \mathcal{L}^n(W)$.
\end{lemma}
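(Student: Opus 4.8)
The plan is to argue by a modified Vitali covering, exactly as in the Calder\'on--Zygmund--Krylov--Safonov decomposition underlying~\cite{CC1995} (the dyadic-cube version would work equally well). We may assume $\mathcal{L}^n(V)>0$. Let $\mathcal{D}\subset V$ be the set of Lebesgue density points of $V$, so $\mathcal{L}^n(V\setminus\mathcal{D})=0$. For each $x\in\mathcal{D}$ the ratio $\rho\mapsto \mathcal{L}^n(V\cap B_\rho(x))/\mathcal{L}^n(B_\rho(x))$ is continuous on $(0,\infty)$, tends to $1$ as $\rho\to 0^+$, and at $\rho=R_1$ is at most $\mathcal{L}^n(V)/\mathcal{L}^n(B_{R_1})<\varepsilon$ by hypothesis~(i). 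Hence there is a \emph{largest} radius $\rho_x\in(0,R_1)$ at which this ratio equals $\varepsilon$; consequently $\mathcal{L}^n(V\cap B_{\rho_x}(x))=\varepsilon\,\mathcal{L}^n(B_{\rho_x}(x))$, while $\mathcal{L}^n(V\cap B_\rho(x))<\varepsilon\,\mathcal{L}^n(B_\rho(x))$ for every $\rho\in(\rho_x,R_1]$. Since $\rho_x\le R_1$, hypothesis~(ii) applied to the pair $(x,\rho_x)$ gives $B_{\rho_x}(x)\cap\Omega\subset W$.

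The radii $\rho_x$ are uniformly bounded by $R_1$, so the basic Vitali covering lemma yields a countable, pairwise disjoint subfamily $\{B_{\rho_i}(x_i)\}_i$ (with $x_i\in\mathcal{D}$, $\rho_i:=\rho_{x_i}$) such that $\mathcal{D}\subset\bigcup_i B_{5\rho_i}(x_i)$. I claim $\mathcal{L}^n(V\cap B_{5\rho_i}(x_i))\le 5^n\varepsilon\,\mathcal{L}^n(B_{\rho_i}(x_i))$ for each $i$: if $5\rho_i\le R_1$ this is the maximality of $\rho_i$ (the density is below $\varepsilon$ at radius $5\rho_i$); if $5\rho_i>R_1$ then $\rho_i>R_1/5$ and $\mathcal{L}^n(V\cap B_{5\rho_i}(x_i))\le\mathcal{L}^n(V)<\varepsilon\,\mathcal{L}^n(B_{R_1})\le 5^n\varepsilon\,\mathcal{L}^n(B_{\rho_i}(x_i))$. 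Summing over $i$ and using that the enlarged balls cover $\mathcal{D}$,
\[
\mathcal{L}^n(V)=\mathcal{L}^n(\mathcal{D})\le\sum_i\mathcal{L}^n\big(V\cap B_{5\rho_i}(x_i)\big)\le 5^n\varepsilon\sum_i\mathcal{L}^n\big(B_{\rho_i}(x_i)\big).
\]

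Finally one absorbs $\sum_i\mathcal{L}^n(B_{\rho_i}(x_i))$ into $\mathcal{L}^n(W)$: the balls $B_{\rho_i}(x_i)$ are pairwise disjoint and, by the first step, $B_{\rho_i}(x_i)\cap\Omega\subset W$, so $\sum_i\mathcal{L}^n(B_{\rho_i}(x_i)\cap\Omega)\le\mathcal{L}^n(W)$; since each centre $x_i$ lies in $\Omega$ and $\rho_i\le R_1$, the measure density of $\Omega$ gives $\mathcal{L}^n(B_{\rho_i}(x_i)\cap\Omega)\ge c\,\mathcal{L}^n(B_{\rho_i}(x_i))$ for a fixed $c>0$, whence $\sum_i\mathcal{L}^n(B_{\rho_i}(x_i))\le c^{-1}\mathcal{L}^n(W)$. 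Plugging this into the last display yields $\mathcal{L}^n(V)\le C\varepsilon\,\mathcal{L}^n(W)$ with $C=5^nc^{-1}$. I expect the only genuine subtleties to be the bookkeeping forced by the scale cut-off $r\le R_1$ in~(ii) — one must take $\rho_x$ to be the largest sub-$R_1$ radius and separately dispose of the balls whose fivefold dilate exceeds $R_1$ — together with the step where the geometry of $\Omega$ enters, namely converting the inclusion $B_{\rho_i}(x_i)\cap\Omega\subset W$ into genuine control of $\sum_i\mathcal{L}^n(B_{\rho_i}(x_i))$ by $\mathcal{L}^n(W)$; the remaining ingredients are the routine Vitali machinery and the elementary covering estimate.
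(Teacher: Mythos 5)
Most of your argument is the standard Vitali stopping-radius route and, as far as it goes, is carried out correctly: the existence of the maximal radius $\rho_x\in(0,R_1)$ with density exactly $\varepsilon$ (via the Lebesgue density theorem, continuity in $\rho$, and hypothesis (i)), the application of (ii) at $(x,\rho_x)$, the $5r$-Vitali extraction, and the dichotomy $5\rho_i\le R_1$ versus $5\rho_i>R_1$ are all fine. Note that the paper itself gives no proof: it only cites \cite{CC1995}, where the result actually proved is the dyadic-cube version ($V\subset W\subset Q_1$ with the predecessor condition), so a ball-based covering argument such as yours is indeed what an adaptation to this setting requires.

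The genuine gap is your final absorption step, where you assert that ``the measure density of $\Omega$ gives $\mathcal{L}^n(B_{\rho_i}(x_i)\cap\Omega)\ge c\,\mathcal{L}^n(B_{\rho_i}(x_i))$ for a fixed $c>0$.'' No such $c$ follows from the hypotheses of the lemma, where $\Omega$ is merely a bounded open set, nor from the paper's standing assumption (H1): uniform $p$-capacity thickness of $\Omega^c$ controls capacity, not the Lebesgue measure of $\Omega\cap B_\rho(x)$ from below, and a thin slab $\Omega=(0,1)\times(0,\delta)^{n-1}$ or an exterior-cusp domain satisfies (H1) while the ratio $\mathcal{L}^n(B_\rho(x)\cap\Omega)/\mathcal{L}^n(B_\rho(x))$ can be arbitrarily small for $x\in\Omega$, $\rho\le R_1$. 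Moreover this step cannot be repaired within the stated hypotheses, because the lemma as written (with $C=C(n)$ only) is false in that generality: take $V=W=\Omega$ with $\Omega$ a slab so thin that $\mathcal{L}^n(\Omega)<\varepsilon\,\mathcal{L}^n(B_{R_1})$; then (i) holds, (ii) holds trivially since its conclusion $B_r(x)\cap\Omega\subset W=\Omega$ is automatic, yet $\mathcal{L}^n(V)\le C\varepsilon\,\mathcal{L}^n(W)$ would force $1\le C\varepsilon$, which fails for $\varepsilon<1/C$. So your proof is correct exactly modulo an interior measure-density hypothesis on $\Omega$ (valid, e.g., for Lipschitz or Reifenberg-flat domains, or with $C$ allowed to depend on the density constant of $\Omega$); that hypothesis must be added either to the statement or to your last step -- it is not a consequence of (i), (ii), or (H1), and treating it as automatic is precisely the missing point.
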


\begin{proof}[Proof of Theorem~\ref{theo:lambda_estimateA}]
Let $\frac{n}{2n-1} < q < \frac{nm(p-1)}{n-m}$ and $u$ be a renormalized solution to~\eqref{eq:main}. It is easy to see that
\begin{align*}
d_{\mathbb{U}}(\varepsilon^{-a}\lambda) \le \mathcal{L}^n (\{\mathbb{U}>\varepsilon^{-a}\lambda, \, \Pi\le \varepsilon^{b}\lambda \}) + d_{\Pi}(\varepsilon^{b}\lambda).
\end{align*}
In order to prove~\eqref{eq:main-dist}, it is enough to show that there exist three constants $a \in (0,1)$, $b \in \mathbb{R}$ and $\varepsilon_0>0$ such that
\begin{align} \label{eq:mainlambda} 
\mathcal{L}^n \left(V_{\lambda,\varepsilon}\right) \le C\varepsilon \mathcal{L}^n\left(W_{\lambda}\right), \quad \forall \lambda>0, \ \varepsilon \in (0,\varepsilon_0).
\end{align}
Here $V_{\lambda,\varepsilon}$ and $W_{\lambda}$ are respectively defined by
\begin{align*}
V_{\lambda,\varepsilon} = \left\{\mathbb{U}>\varepsilon^{-a}\lambda, \, \Pi\le \varepsilon^{b}\lambda \right\} \ \mbox{ and } \ W_{\lambda} = \left\{\mathbb{U} > \lambda\right\}.
\end{align*} 
We may assume that $V_{\lambda,\varepsilon} \neq \emptyset$, which leads to the existence of $\xi_0 \in \Omega$ such that $\Pi(\xi_0) \le \varepsilon^{b}\lambda$. Thanks to Lemma~\ref{lem:N1}, one has
\begin{equation}\label{assump:i}
\mathcal{L}^n(V_{\lambda,\varepsilon}) \le d_{\mathbb{U}}(\varepsilon^{-a}\lambda) \leq \varepsilon \mathcal{L}^n ({B_{R_0}}(0)),  
\end{equation}
for all $\lambda>0$, where $R_0=\min\{D_0,r_0\}$. Moreover one may verify that for all $x\in \Omega$, $r\in (0,R_0]$ and $\lambda>0$, the following statement does hold:
\begin{equation}\label{assump:ii}
\mathcal{L}^n(V_{\lambda,\varepsilon}\cap B_r(x)) \geq C\varepsilon \mathcal{L}^n(B_r(x)) \Longrightarrow B_r(x)\cap \Omega\subset W_\lambda.
\end{equation}
Indeed, let us suppose that $B_r(x)\cap \Omega \cap W^c_\lambda\not= \emptyset$ and $V_{\lambda,\varepsilon}\cap B_r(x)\not = \emptyset$. Then, there exist $\xi_2$, $\xi_3\in B_r(x)\cap \Omega$ such that $\mathbb{U}(\xi_2) \le \lambda$ and $\Pi(\xi_3) \le \varepsilon^{b}\lambda$. Thanks to Lemma~\ref{lem:N2} and Lemma~\ref{lem:N3}, one can find suitable parameters $a \in (0,1)$, $b\in \mathbb{R}$ and $\varepsilon_0>0$ such that
\begin{align*}
\mathcal{L}^n(V_{\lambda,\varepsilon}\cap B_r(x)) \le d_{\mathbb{U}}(B_r(x);\varepsilon^{-a}\lambda) \leq d_{\mathbb{U}_{2r}^x}(B_r(x);\varepsilon^{-a}\lambda) \le C \varepsilon \mathcal{L}^n(B_r(x)),
\end{align*}
which implies~\eqref{assump:ii} by contradiction. From~\eqref{assump:i} and~\eqref{assump:ii}, the inequality~\eqref{eq:mainlambda} holds by applying Lemma~\ref{lem:app}. The proof is complete.
\end{proof}

\begin{proof}[Proof of Theorem \ref{theo:good-lambda-2B}] The proof of this theorem is slightly the same as the proof of Theorem~\ref{theo:lambda_estimateA}. The difference lies on the using of Lemma~\ref{lem:N1} which will be replaced by the next estimate. For any $\lambda>\varepsilon^{-b} \rho^{-\frac{n}{q}} \|\nabla u\|_{L^{q}(B_{10\rho}(x)\cap \Omega)}$, thanks to Lemma~\ref{lem:boundM} we have the following estimate
\begin{align*}
d_{\tilde{\mathbb{U}}}(B_{\rho}(x);\varepsilon^{-a}\lambda) & \leq \frac{C}{(\varepsilon^{-a}\lambda)^q} \int_{\Omega} \chi_{B_{10\rho}(x)}|\nabla u|^{q} dx \\
&  \leq \frac{C}{(\varepsilon^{-a}\varepsilon^{-b} \rho^{-\frac{n}{q}} \|\nabla u\|_{L^{q}(B_{10\rho}(x)\cap \Omega)})^q} \int_{\Omega} \chi_{B_{10\rho}(x)}|\nabla u|^{q} dx \\
&  \le C \varepsilon^{(a+b)q} \mathcal{L}^{n}(B_{10\rho}(x)) \\
&  \le  \varepsilon \mathcal{L}^{n}(B_{10\rho}(x)).
\end{align*}
As in the proof of Theorem~\ref{theo:lambda_estimateA}, we recall that $a$, $b$ will be chosen such that $(a+b)q > 1$ which guarantees the last inequality for $\varepsilon$ small enough. The other steps of the proof will be performed by the same way as in Theorem~\ref{theo:lambda_estimateA}. 
\end{proof}

\subsection{Gradient estimate in Lorentz spaces}
\begin{proof}[Proof of Theorem \ref{theo:mainC}]	
In what follows we prove Theorem~\ref{theo:mainC} only for the case $t \neq \infty$, and for $t=\infty$ the proof is similar. Let us fix $\frac{n}{2n-1}<q<\frac{nm(p-1)}{n-m}$.  Thanks to Theorem~\ref{theo:lambda_estimateA},  there exist constants $\Theta_0>p$, $a =\Theta_0^{-1}$, $b \in \mathbb{R}$, $C>0$ and $0<\varepsilon_0<1$  such that the following inequality 
\begin{align}\label{eq:EF}
d_{\mathbb{U}}(\varepsilon^{-a}\lambda) \le C \varepsilon d_{\mathbb{U}}(\lambda) + d_{\Pi}(\varepsilon^{b}\lambda)
\end{align}
holds for any $\varepsilon \in (0,\varepsilon_0)$ and $\lambda>0$, where
$\mathbb{U} = (\mathbf{M}(|\nabla u|^{q}))^{\frac{1}{q}}$, and $\Pi = (\mathbf{M}_{m}(|\mu|^{m}))^{\frac{1}{m(p-1)}}$. By changing of variables from $\lambda$ to $\varepsilon^{-a}\lambda$ in the standard definition of Lorentz space, one has
\begin{align}\label{eq:m-1}
\|\mathbb{U}\|_{L^{s,t}(\Omega)}^t & = s \int_0^\infty{\lambda^{t-1} \left[d_{\mathbb{U}}(\lambda)\right]^{\frac{t}{s}} {d\lambda}} = {\varepsilon^{-at}} s \int_0^\infty{ \lambda^{t-1} \left[d_{\mathbb{U}}(\varepsilon^{-a}\lambda)\right]^{\frac{t}{s}} {d\lambda}}.
\end{align} 
Substituting~\eqref{eq:EF} into~\eqref{eq:m-1}, we obtain that
\begin{align}\label{eq:2} 
\|\mathbb{U}\|_{L^{s,t}(\Omega)}^{t-1} \le C {\varepsilon^{-at + \frac{t}{s}}} s \int_0^\infty{ \lambda^{t-1} \left[d_{\mathbb{U}}(\lambda)\right]^{\frac{t}{s}} {d\lambda}} + C \varepsilon^{-at} s \int_0^\infty{ \lambda^{t-1} \left[d_{\Pi}(\varepsilon^b\lambda)\right]^{\frac{t}{s}} {d\lambda}}.
\end{align}
Changing of variables for the last integral in~\eqref{eq:2}, one gets
\begin{align*}
\|\mathbb{U}\|_{L^{s,t}(\Omega)}^t  \le C {\varepsilon^{\left(-a + \frac{1}{s}\right)t}}  \|\mathbb{U}\|_{L^{s,t}(\Omega)}^t  +  C \varepsilon^{-at-bt} \|\Pi\|_{L^{s,t}(\Omega)}^t,
\end{align*}
which deduces to
\begin{align}\label{eq:3}
\|\mathbb{U}\|_{L^{s,t}(\Omega)}  \le C {\varepsilon^{-a + \frac{1}{s}}} \|\mathbb{U}\|_{L^{s,t}(\Omega)} +  C \varepsilon^{-a-b} \|\Pi\|_{L^{s,t}(\Omega)}.
\end{align}
For any $0<s<a^{-1}= \Theta_0$ and $0<t<\infty$, in~\eqref{eq:3} we may choose $\varepsilon\in (0,\varepsilon_0)$ small enough such that $C {\varepsilon^{-a + \frac{1}{s}}} \le 1/2$ and in conclusion we have obtained~\eqref{eq:theo-mainC}.
\end{proof}

\subsection{Gradient estimate in Lorentz-Morrey spaces}
In this subsection, we prove the Lorentz-Morrey gradient estimate for renormalized solution to~\eqref{eq:main}. The following standard lemma is useful for our proof.
\begin{lemma}\label{lem:app}
Let $f \in L^{s,t;\kappa}(\Omega)$ for $0<s<\infty$, $0<t\le \infty$ and $0<\kappa \le n$. For $0< \sigma \le \frac{\kappa}{s}$, there exists a constant $C=C(n,s,\kappa,\sigma)>0$ such that
\begin{align}\label{est:lem-app-2}
\mathbf{M}_{\sigma}(f)(y) \le C \left(\|f\|_{L^{s,t; \kappa}(\Omega)}\right)^{\frac{\sigma s}{\kappa}} \left[\mathbf{M}(f)(y)\right]^{1-\frac{\sigma s}{\kappa}} ,
\end{align}
for $\rho>0$ and $y \in \Omega$. In particular, there holds
\begin{align}\label{est:lem-app-1}
\|\mathbf{M}_{\frac{\kappa}{s}}^{D_0}(f)\|_{L^{\infty}(\Omega)} \le C \|f\|_{L^{s,t;\kappa}(\Omega)}.
\end{align}
\end{lemma}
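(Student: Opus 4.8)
The plan is to reduce both inequalities to a single scale–invariant estimate: for every $y\in\Omega$ and every $\rho>0$,
\begin{align}\label{plan:key}
\rho^{\frac{\kappa}{s}}\fint_{B_\rho(y)}|f(x)|\,dx \le C\,\|f\|_{L^{s,t;\kappa}(\Omega)},
\end{align}
with $C=C(n,s,\kappa)$. Granting \eqref{plan:key}, inequality \eqref{est:lem-app-1} is immediate by taking the supremum over $0<\rho<D_0$ and $y\in\Omega$. To obtain \eqref{est:lem-app-2} I would set $\theta:=\frac{\sigma s}{\kappa}\in(0,1]$, so that $\sigma=\theta\,\frac{\kappa}{s}$, and split each average as
\begin{align*}
\rho^{\sigma}\fint_{B_\rho(y)}|f|\,dx = \Big(\rho^{\frac{\kappa}{s}}\fint_{B_\rho(y)}|f|\,dx\Big)^{\theta}\Big(\fint_{B_\rho(y)}|f|\,dx\Big)^{1-\theta} \le \big(C\|f\|_{L^{s,t;\kappa}(\Omega)}\big)^{\theta}\,\big(\mathbf{M}f(y)\big)^{1-\theta};
\end{align*}
taking the supremum over $\rho>0$ on the left then yields \eqref{est:lem-app-2}.

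It remains to prove \eqref{plan:key}. For $0<\rho<D_0$ the idea is: first apply H\"older's inequality in Lorentz spaces (see \cite{55Gra}) — equivalently the embedding $L^{s,t}(E)\hookrightarrow L^1(E)$ on a set $E$ of finite measure, $\|f\|_{L^1(E)}\le C\,\mathcal{L}^n(E)^{1-\frac1s}\|f\|_{L^{s,t}(E)}$ — with $E=B_\rho(y)\cap\Omega$; then bound $\|f\|_{L^{s,t}(B_\rho(y)\cap\Omega)}\le\rho^{\frac{n-\kappa}{s}}\|f\|_{L^{s,t;\kappa}(\Omega)}$ directly from the definition of the Lorentz–Morrey norm. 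Since $\mathcal{L}^n(B_\rho(y))=c_n\rho^{n}$ and the exponents add up as $n\big(1-\tfrac1s\big)+\tfrac{n-\kappa}{s}-n=-\tfrac{\kappa}{s}$, these two steps combine to $\fint_{B_\rho(y)}|f|\,dx\le C\rho^{-\kappa/s}\|f\|_{L^{s,t;\kappa}(\Omega)}$, which is \eqref{plan:key} for $\rho<D_0$. For $\rho\ge D_0$ I would use that $\Omega\subset B_{D_0}(y)$ for every $y\in\Omega$ (the diameter of an open set is never attained inside it), so that $\int_{B_\rho(y)\cap\Omega}|f|=\int_\Omega|f|$; letting $\rho\nearrow D_0$ in the case just handled gives $\int_\Omega|f|\le C D_0^{\,n-\kappa/s}\|f\|_{L^{s,t;\kappa}(\Omega)}$, and since $n-\tfrac{\kappa}{s}\ge0$ we get $\rho^{\kappa/s}\fint_{B_\rho(y)}|f|\le C(D_0/\rho)^{\,n-\kappa/s}\|f\|_{L^{s,t;\kappa}(\Omega)}\le C\|f\|_{L^{s,t;\kappa}(\Omega)}$.

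I do not expect a genuine obstacle here: the statement is really bookkeeping of scaling exponents built on the defining Morrey inequality and the Lorentz–$L^1$ embedding on a ball. The points that need a little care are (i) the legitimacy of the Lorentz–$L^1$ embedding on a ball, which is where the hypothesis $s>1$ (or the borderline convention at $s=1$) enters; (ii) the large–radius regime $\rho\ge D_0$, and in particular the borderline case $\kappa/s=n$, which still goes through because the factor $(D_0/\rho)^{n-\kappa/s}$ is $\le1$; and (iii) verifying that the constant $C$ depends only on $n,s,\kappa$ — and, in \eqref{est:lem-app-2}, on $\sigma$ through $\theta$ — but not on $t$ or on $f$.
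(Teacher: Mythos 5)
Your argument is correct and is essentially the paper's own proof in a slightly reorganized form: the paper interpolates the average with exponent $\alpha=\frac{\sigma s}{\kappa}$ and then bounds the first factor by the Lorentz--H\"older embedding on the ball together with the definition of the $L^{s,t;\kappa}$-norm, which is exactly your key estimate \eqref{plan:key} followed by the same $\theta$-splitting. Your extra care with the regime $\rho\ge D_0$ (and the remark about where $s>1$ enters) only makes explicit what the paper leaves implicit, so there is nothing further to add.
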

\begin{proof}
Let $\rho>0$ and $y \in \Omega$. For any $0< \alpha\le 1$, we have
\begin{align*}
\rho^{\sigma-n} \int_{B_{\rho}(y)} f(x)dx & =  \left(\rho^{\frac{\sigma}{\alpha}-n} \int_{B_{\rho}(y)} f(x)dx\right)^{\alpha} \left(\rho^{-n} \int_{B_{\rho}(y)} f(x)dx\right)^{1-\alpha} \\
& \le C \left(\rho^{\frac{\sigma}{\alpha}-n} \rho^{n-\frac{n}{s}} \|f\|_{L^{s,\infty}(B_{\rho}(y))}\right)^{\alpha} \left[\mathbf{M}(f)(y)\right]^{1-\alpha} \\
& \le C \left(\rho^{\frac{\sigma}{\alpha}-\frac{n}{s}}  \|f\|_{L^{s,t}(B_{\rho}(y))}\right)^{\alpha} \left[\mathbf{M}(f)(y)\right]^{1-\alpha}\\
& \le C \left(\|f\|_{L^{s,t;\frac{\sigma s}{\alpha}}(\Omega)}\right)^{\alpha} \left[\mathbf{M}(f)(y)\right]^{1-\alpha}.
\end{align*}
Let us take $\alpha = \frac{\sigma s}{\kappa}$, we obtain~\eqref{est:lem-app-2}. By choosing $\alpha=1$ and taking the supremum both sides of this inequality for all $0<\rho<D_0$ and $y \in \Omega$, we obtain~\eqref{est:lem-app-1} which completes the proof.
\end{proof}

\begin{proof}[Proof of Theorem~\ref{theo:mainD} ]	
For simplicity of notation, we denote  $B_{\rho} := B_\rho(x)$ and $B_{10\rho}: = B_{10\rho}(x)$ with $0<\rho<D_0$ and $x \in \Omega$. For every $q \in \left(\frac{n}{2n-1},\frac{nm(p-1)}{n-m}\right)$, let us define
$$\tilde{\mathbb{U}} = (\mathbf{M}(\chi_{B_{10\rho}}|\nabla u|^{q}))^{\frac{1}{q}}, \quad \tilde{\Pi} = (\mathbf{M}_{m}(\chi_{B_{10\rho}}|\mu|^{m}))^{\frac{1}{m(p-1)}}.$$
Thanks to Theorem~\ref{theo:good-lambda-2B}, one can find constants $a \in (0,1)$, $b \in \mathbb{R}$, $\varepsilon_0>0$ and  $C>0$ such that the following estimate holds 	
\begin{align} \label{eq:esp}
 d_{\tilde{\mathbb{U}}}(B_{\rho};\varepsilon^{-a}\lambda) \le C \varepsilon d_{\tilde{\mathbb{U}}}(B_{\rho};\lambda) + d_{\tilde{\Pi}}(B_{\rho};\varepsilon^{b}\lambda),
\end{align}
for any $\varepsilon\in (0,\varepsilon_0)$ and $\lambda>\lambda_0$, where 
\begin{align}\label{eq:lambda-0}
\lambda_0 = \varepsilon^{-b} \rho^{-\frac{n}{q}} \|\nabla u\|_{L^{q}(B_{10\rho}\cap \Omega)}.
\end{align}
For the convenience of the reader, let us denote
 \begin{align}\label{eq:alpha}
 \alpha := \frac{ m s \kappa (p-1)}{\kappa-ms} \ \mbox{ and } \ \beta := \frac{ m t \kappa (p-1)}{\kappa-mt}.
 \end{align}
By changing of variables in the definition of Lorentz norm we obtain that
 \begin{align}\label{eq:tp1}
 \|\tilde{\mathbb{U}}\|_{L^{\alpha,\beta}(B_{\rho})}^\beta & = \varepsilon^{-\beta a} \alpha\int_{0}^{\infty}\lambda^{\beta-1} \left[d_{\tilde{\mathbb{U}}}(B_{\rho};\varepsilon^{-a}\lambda)\right]^{\frac{\beta}{\alpha}} d\lambda \notag \\
 & = \varepsilon^{-\beta a} \alpha\int_{0}^{\lambda_0}\lambda^{\beta-1} \left[d_{\tilde{\mathbb{U}}}(B_{\rho};\varepsilon^{-a}\lambda)\right]^{\frac{\beta}{\alpha}} d\lambda \notag \\ 
 & \qquad \qquad + \varepsilon^{-\beta a} \alpha\int_{\lambda_0}^{\infty}\lambda^{\beta-1}\left[d_{\tilde{\mathbb{U}}}(B_{\rho};\varepsilon^{-a}\lambda)\right]^{\frac{\beta}{\alpha}} d\lambda.
 \end{align}
We remark that~\eqref{eq:esp} holds for any $\lambda>\lambda_0$, it follows from~\eqref{eq:tp1} that
\begin{align*}
 \|\tilde{\mathbb{U}}\|_{L^{\alpha,\beta}(B_{\rho})}^{\beta} & \leq C\varepsilon^{-\beta a}\lambda_0^{\beta}\mathcal{L}^n\left( B_{\rho}\right)^{\frac{\beta}{\alpha}} + C\varepsilon^{-\beta a+\frac{\beta}{\alpha}}\alpha\int_{\lambda_0}^{\infty}\lambda^{\beta-1} \left[d_{\tilde{\mathbb{U}}}(B_{\rho};\lambda)\right]^{\frac{\beta}{\alpha}} d\lambda \\
 & \qquad + C\varepsilon^{-\beta a} \alpha\int_{\lambda_0}^{\infty}\lambda^{\beta-1}\left[d_{\tilde{\Pi}}(B_{\rho};\varepsilon^{b}\lambda)\right]^{\frac{\beta}{\alpha}} d\lambda\\
 & \le C\varepsilon^{-\beta a}\lambda_0^{\beta}\rho^{\frac{n\beta}{\alpha}}+ C\varepsilon^{-\beta a+\frac{\beta}{\alpha}} \|\tilde{\mathbb{U}}\|_{L^{\alpha,\beta}(B_{\rho})}^{\beta}  + C\varepsilon^{-\beta a-\beta b}\|\tilde{\Pi}\|_{L^{\alpha,\beta}(B_{\rho})}^{\beta}.
 \end{align*}
Then, it gives us the estimate:
\begin{align}\label{eq:tp3}
\|\tilde{\mathbb{U}}\|_{L^{\alpha,\beta}(B_{\rho})} & \leq  C\varepsilon^{-a}\lambda_0\rho^{\frac{n}{\alpha}}  + C\varepsilon^{-a+\frac{1}{\alpha}} \|\tilde{\mathbb{U}}\|_{L^{\alpha,\beta}(B_{\rho})}  + C\varepsilon^{-a-b}\|\tilde{\Pi}\|_{L^{\alpha,\beta}(B_{\rho})}.
\end{align}
The inequality~\eqref{eq:tp3} holds for any $\alpha >0$ and $0 < \beta< \infty$. We remark that this inequality even holds for $\beta=\infty$ by the same method. For $0< \alpha< \Theta_0:= a^{-1}$, we may choose $\varepsilon \in (0,\varepsilon_0)$ such that $C\varepsilon^{-a+\frac{1}{\alpha}} <1/2$ in~\eqref{eq:tp3}, then one obtains from~\eqref{eq:lambda-0} that
\begin{align}\label{est:M-10-1} 
\|\tilde{\mathbb{U}}\|_{L^{\alpha,\beta}(B_{\rho})} & \leq  C \rho^{\frac{n}{\alpha}-\frac{n}{q}} \|\nabla u\|_{L^{q}(B_{10\rho}\cap \Omega)} + C\|\tilde{\Pi}\|_{L^{\alpha,\beta}(B_{\rho})}.
 \end{align} 
Applying Lemma~\ref{lem:res7.2H} with $\sigma = \frac{\kappa}{s}$ and $\delta = \frac{\kappa-ms}{ms(p-1)}$ satisfying 
$$m+ m(p-1)(1-\beta_0) < \frac{\kappa}{s} \le n,$$ 
it gives us the following estimate
 \begin{align*}
\|\nabla u\|_{L^{q}(B_{10\rho}\cap \Omega)}\le C \rho^{\frac{n}{q}-\delta} \|\mathbf{M}_{\frac{\kappa}{s}}^{D_0}(|\mu|^m)\|_{L^\infty(\Omega)}^{\frac{1}{m(p-1)}},
 \end{align*}
which deduces from~\eqref{est:M-10-1} that
\begin{align}\label{est:LM-1}
 \rho^{\delta-\frac{n}{\alpha}}\|\tilde{\mathbb{U}}\|_{L^{\alpha,\beta}(B_{\rho}(x))} & \leq  C \|\mathbf{M}_{\frac{\kappa}{s}}^{D_0}(|\mu|^m)\|_{L^\infty(\Omega)}^{\frac{1}{m(p-1)}} + C \rho^{\delta-\frac{n}{\alpha}} \|\tilde{\Pi}\|_{L^{\alpha,\beta}(B_{\rho}(x))}.
 \end{align}
Here it is very easy to check that $\delta \alpha = \kappa$ using the definition of $\alpha$ in~\eqref{eq:alpha}. By taking the supremum both sides of~\eqref{est:LM-1} for $0<\rho<D_0$ and $x \in \Omega$, it guarantees that
 \begin{align}\label{est:I}
 \|\nabla u\|_{L^{\alpha,\beta; \kappa}(\Omega)} \leq C (I_1 + I_2),
 \end{align}
 where $I_1$ and $I_2$ are defined by
 \begin{align}\label{est:I2-1}
 I_1 := \|\mathbf{M}_{\frac{\kappa}{s}}^{D_0}(|\mu|^m)\|_{L^\infty(\Omega)}^{\frac{1}{m(p-1)}} \ \mbox{ and } \ 
 I_2 :=  \sup_{0<\rho<D_0,\, x \in \Omega} \rho^{\frac{\kappa - n}{\alpha}} \|\tilde{\Pi}\|_{L^{\alpha,\beta}(B_{\rho}(x))}.
 \end{align}
 Applying~\eqref{est:lem-app-2} in Lemma~\eqref{lem:app}, one easily estimates $I_1$ as
 \begin{align}\label{est:I1}
 I_1 \le C \||\mu|^m\|_{L^{s,t;\kappa}(\Omega)}^{\frac{1}{m(p-1)}} .
 \end{align}
 It is necessary to estimate $I_2$ by the same norm  in $L^{s,t;\kappa}(\Omega)$. For any $y \in B_{\rho}(x)$, thanks to~\eqref{est:lem-app-1} in Lemma~\ref{lem:app} we have
 \begin{align*}
 (\mathbf{M}_m(\chi_{B_{10\rho}(x)}|\mu|^m))(y) \le C \left[(\mathbf{M}(\chi_{B_{10\rho}(x)}|\mu|^m))(y)\right]^{1-\frac{m s}{\kappa}} \left(\||\mu|^m\|_{L^{s,t; \kappa}(B_{10\rho}(x))}\right)^{\frac{m s}{\kappa}},
 \end{align*}
which implies that
 \begin{align*}
 \|\tilde{\Pi}\|_{L^{\alpha,\beta}(B_{\rho}(x))}  & = \left[\|\mathbf{M}_m(\chi_{B_{10\rho}(x)}|\mu|^m))\|_{L^{\frac{\alpha}{m(p-1)},\frac{\beta}{m(p-1)}}(B_{\rho}(x))}\right]^{\frac{1}{m(p-1)}} \\
 & \le C \left\|(\mathbf{M}(\chi_{B_{10\rho}(x)}|\mu|^m))^{\left(1-\frac{m s}{\kappa}\right)}\right\|_{L^{\frac{\alpha}{m(p-1)},\frac{\beta}{m(p-1)}}(B_{\rho}(x))}^{\frac{1}{m(p-1)}} \||\mu|^m\|_{L^{s,t;\kappa}(B_{10\rho}(x))}^{\frac{s}{(p-1)\kappa}} \\
 & \le C \left\|(\mathbf{M}(\chi_{B_{10\rho}(x)}|\mu|^m))\right\|_{L^{\frac{\alpha(\sigma-m)}{m(p-1)\sigma},\frac{\beta(\sigma-m)}{m(p-1)\sigma}}(B_{\rho}(x))}^{\frac{\sigma - m}{m(p-1)\sigma}} \||\mu|^m\|_{L^{s,t;\kappa}(B_{10\rho}(x))}^{\frac{s}{(p-1)\kappa}}\\
 & = C \left\|(\mathbf{M}(\chi_{B_{10\rho}(x)}|\mu|^m))\right\|_{L^{s,t}(B_{\rho}(x))}^{\frac{\kappa - ms}{m(p-1)\kappa}} \||\mu|^m\|_{L^{s,t;\kappa}(B_{10\rho}(x))}^{\frac{s}{(p-1)\kappa}}.
 \end{align*}
 Using the boundedness of the Hardy-Littlewood maximal function $\mathbf{M}$, ones obtains that
 \begin{align*}
\|\tilde{\Pi}\|_{L^{\alpha,\beta}(B_{\rho}(x))}  \le C \||\mu|^m\|_{L^{s,t}(B_{10\rho}(x))}^{\frac{\kappa - ms}{m(p-1)\kappa}} \||\mu|^m\|_{L^{s,t;\kappa}(B_{10\rho}(x))}^{\frac{s}{(p-1)\kappa}}.
 \end{align*}
By the definition of Lorentz-Morrey norm with remark that $\frac{s}{\alpha} = \frac{\kappa - ms}{m(p-1)\kappa}$, we deduce from the above inequality that
 \begin{align}\label{est:I2-2}
 \|\tilde{\Pi}\|_{L^{\alpha,\beta}(B_{\rho}(x))}  \le    C \rho^{\frac{n - \kappa}{\alpha}} \||\mu|^m\|_{L^{s,t;\kappa}(\Omega)}^{\frac{1}{m(p-1)}}.
 \end{align}
 Combining~\eqref{est:I2-1} and~\eqref{est:I2-2}, we get that
 \begin{align}\label{est:I2}
 I_2 \le C \||\mu|^m\|_{L^{s,t;\kappa}(\Omega)}^{\frac{1}{m(p-1)}}.
 \end{align}
Taking into account~\eqref{est:I1} and~\eqref{est:I2} to~\eqref{est:I}, we may conclude~\eqref{est:LM}. Finally, we note that all hypotheses that we need on parameters $s$, $\kappa$ are
\begin{align*}
m+m(p-1)(1-\beta_0)< \frac{\kappa}{s} \le n, \mbox{ and } 0<\frac{m(p-1)s\kappa}{\kappa-ms}< \Theta_0,
\end{align*}
which are equivalent to assumption~\eqref{est:cond-s}. Moreover, we remark that
\begin{align*}
\frac{\kappa}{n} < \frac{\kappa \Theta_0}{m\Theta_0 + m(p-1)\kappa}  \ \mbox{ if and only if } \ \kappa < \frac{(n-m)\Theta_0}{(p-1)m},
\end{align*}
and
\begin{align*}
\frac{\kappa}{n} < \frac{\kappa}{m+m(1-\beta_0)(p-1)}  \ \mbox{ if and only if } \ m < \frac{n}{1 + (1-\beta_0)(p-1)},
\end{align*}
which is always true for $m < m^{**}$. The proof of~\eqref{est:LM} is complete.
\end{proof}

\end{document}